\newtheorem{theorem}{Theorem}[section]
\newtheorem{proposition}[theorem]{Proposition}
\newtheorem{lemma}[theorem]{Lemma}
\newtheorem{corollary}[theorem]{Corollary}
\theoremstyle{definition}
\newtheorem{definition}[theorem]{Definition}
\theoremstyle{remark}
\numberwithin{equation}{section}
\DeclareMathOperator{\Mod}{Mod-}
\DeclareMathOperator{\Hom}{Hom}
\DeclareMathOperator{\Cyl}{Cyl}
\DeclareMathOperator{\ho}{Ho}
\newcommand{\Ch}[1]{\text{Ch} (#1)}
\newcommand{\cat}[1]{\mathcal{#1}}
\newcommand{\loc}[1]{\text{loc }\langle #1 \rangle}
\newcommand{\Z}{\mathbb{Z}}
\newcommand{\Ab}{\text{Ab}}
\newcommand{\SSet}{\mathbf{SSet}}
\newcommand{\Top}{\mathbf{Top}}
\newcommand{\colim}{\varprojlim}
\newcommand{\mathcolon}{\colon\,}
\newcommand{\uc}{\textup{:}}
\newcommand{\ulp}{\textup{(}}
\newcommand{\urp}{\textup{)}}
\newcommand{\usc}{\textup{;}}
\begin{document}

\title{The Eilenberg-Watts theorem in homotopical algebra}

\date{\today}

\author{Mark Hovey}
\address{Department of Mathematics \\ Wesleyan University
\\ Middletown, CT 06459}
\email{hovey@member.ams.org}


\begin{abstract}
The object of this paper is to prove that the standard categories in
which homotopy theory is done, such as topological spaces, simplicial
sets, chain complexes of abelian groups, and any of the various good
models for spectra, are all \textbf{homotopically self-contained}.
The left half of this statement essentially means that any functor
that looks like it could be a tensor product (or product, or smash
product) with a fixed object is in fact such a tensor product, up to
homotopy.  The right half says any functor that looks like it could be
Hom into a fixed object is so, up to homotopy.  More precisely,
suppose we have a closed symmetric monoidal category (resp. Quillen
model category) $\cat{M}$.  Then the functor $T_{N}\mathcolon
\cat{M}\xrightarrow{}\cat{M}$ that takes $M$ to $M\otimes N$ is an
$\cat{M}$-functor and a left adjoint.  The same is true if $N$ is an
$E\text{-}E'$-bimodule, where $E$ and $E'$ are monoids in $\cat{M}$,
and $T_{N}\mathcolon \Mod E\xrightarrow{}\Mod E'$ is defined by $T_{N}
(M)=M\otimes_{E}N$.  Define a closed symmetric monoidal category
(resp. model category) to be \textbf{left self-contained}
(resp. \textbf{homotopically left self-contained}) if every functor
$F\mathcolon \Mod E\xrightarrow{}\Mod E'$ that is an $\cat{M}$-functor
and a left adjoint (resp. and a left Quillen functor) is naturally
isomorphic (resp. naturally weakly equivalent) to $T_{N}$ for some
$N$.  The classical Eilenberg-Watts theorem in algebra then just says
that the category $\Ab$ of abelian groups is left self-contained, so
we are generalizing that theorem.
\end{abstract}

\maketitle

\section*{Introduction}

The object of this paper is to extend the Eilenberg-Watts
theorem~\cite{eilenberg, watts} to situations such as topological
spaces, chain complexes, or symmetric spectra, in which one is
interested in objects and functors not up to isomorphism, but up to
some notion of weak equivalence.  We first recall the standard
Eilenberg-Watts theorem.  

\begin{theorem}\label{thm-watts}
Let $R$ and $S$ be rings.  If $F\mathcolon \Mod R\xrightarrow{}\Mod S$
is additive and a left adjoint, then $FR$ is an $R\text{-}S$-bimodule
and there is a natural isomorphism
\[
X\otimes_{R} FR\xrightarrow{} FX.  
\]
\end{theorem}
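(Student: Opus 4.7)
The plan is to first build the candidate bimodule and candidate natural map, and then verify the isomorphism by reducing to the case $X = R$ via right exactness and compatibility with direct sums.

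First, I would construct the left $R$-action on $FR$. The object $FR$ lies in $\Mod S$ by hypothesis, so it carries a natural right $S$-action. For each $r \in R$, right multiplication $\rho_r \mathcolon R \to R$, $x \mapsto xr$, is a morphism of right $R$-modules. Applying the additive functor $F$ gives a morphism of right $S$-modules $F(\rho_r) \mathcolon FR \to FR$. Additivity of $F$ guarantees that $r \mapsto F(\rho_r)$ is a ring homomorphism $R^{\mathrm{op}} \to \End_S(FR)^{\mathrm{op}}$, making $FR$ into an $R$-$S$-bimodule. Next, for each right $R$-module $X$ and each $x \in X$, the map $\mu_x \mathcolon R \to X$, $r \mapsto xr$, is a right $R$-module morphism. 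Applying $F$ yields $F(\mu_x) \mathcolon FR \to FX$. The assignment $(x,m) \mapsto F(\mu_x)(m)$ is $\Z$-bilinear by additivity of $F$, and is $R$-balanced because $\mu_{xr} = \mu_x \circ \rho_r$, so it descends to a natural map
\[
\tau_X \mathcolon X \otimes_R FR \longrightarrow FX.
\]
Naturality in $X$ follows from the functoriality of $F$ applied to the identity $\mu_{f(x)} = f \circ \mu_x$ for any $R$-map $f \mathcolon X \to X'$.

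Second, I would check that $\tau_R$ is the identity of $FR$ under the canonical iso $R \otimes_R FR \cong FR$, which is immediate from $\mu_1 = \mathrm{id}_R$. Because $F$ is a left adjoint, it preserves all colimits, and in particular arbitrary direct sums; the functor $(-) \otimes_R FR$ also preserves direct sums. Since $\tau$ is natural, this forces $\tau_X$ to be an isomorphism whenever $X$ is a free right $R$-module.

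Third, for general $X$, I would choose a free presentation $F_1 \xrightarrow{\partial} F_0 \xrightarrow{p} X \to 0$. Both $(-)\otimes_R FR$ and $F$ are right exact (the former always, the latter because it is a left adjoint and hence preserves cokernels), so applying them yields a commutative diagram
\[
\begin{CD}
F_1 \otimes_R FR @>>> F_0 \otimes_R FR @>>> X \otimes_R FR @>>> 0 \\
@V\tau_{F_1}VV @V\tau_{F_0}VV @V\tau_XVV @. \\
FF_1 @>>> FF_0 @>>> FX @>>> 0
\end{CD}
\]
with exact rows. Since $\tau_{F_1}$ and $\tau_{F_0}$ are isomorphisms by the previous step, the five-lemma (or a direct diagram chase on cokernels) shows $\tau_X$ is an isomorphism as well.

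I do not expect a serious obstacle: the only subtle point is the bookkeeping that makes $r \mapsto F(\rho_r)$ into a genuine left action (rather than a right action on a right module), which is handled by noting that $\rho_{rr'} = \rho_{r'} \circ \rho_r$ so $F$ converts the opposite multiplication correctly. Everything else is formal, driven by the two facts that $F$ is additive and preserves colimits.
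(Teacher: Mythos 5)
Your overall strategy is exactly the one the paper has in mind: build $\tau$ from the additive (enriched) structure, observe $\tau_R$ is the canonical isomorphism, pass to free modules using preservation of direct sums, and then to arbitrary modules using a free presentation and right exactness of both sides (this is the reduction the paper describes after Theorem~\ref{thm-iso}, with the construction of $\tau$ itself being the elementary shadow of Proposition~\ref{prop-general-map}).

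However, the construction of the left $R$-action on $FR$ is wrong as written, and this is not just bookkeeping. You apply $F$ to $\rho_r\colon x\mapsto xr$, but right multiplication by $r$ is \emph{not} a morphism of right $R$-modules unless $R$ is commutative: $\rho_r(xs)=xsr$ while $\rho_r(x)s=xrs$. So $F(\rho_r)$ is simply undefined; $\rho_r$ does not live in $\Mod R$. (Even setting that aside, the identity you invoke, $\rho_{rr'}=\rho_{r'}\circ\rho_r$, would give $F(\rho_{rr'})=F(\rho_{r'})\circ F(\rho_r)$, i.e.\ a \emph{right} $R$-action on $FR$, whereas the tensor product $X\otimes_R FR$ needs a \emph{left} one.) The correct move is the opposite: use \emph{left} multiplication $\lambda_r\colon x\mapsto rx$, which \emph{is} right $R$-linear ($\lambda_r(xs)=rxs=\lambda_r(x)s$) and satisfies $\lambda_{rr'}=\lambda_r\circ\lambda_{r'}$, so that $r\mapsto F(\lambda_r)$ is a genuine ring homomorphism $R\to\End_S(FR)$, i.e.\ a left $R$-action commuting with the right $S$-action. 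The balancing identity then reads $\mu_{xr}=\mu_x\circ\lambda_r$ (both send $r''$ to $xrr''$), which is what makes $(x,m)\mapsto F(\mu_x)(m)$ descend to $X\otimes_R FR$. With this correction the remainder of your argument (the check on $R$, the direct-sum step, and the five-lemma step) is sound and standard.
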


Here, and throughout the paper, all modules are right modules unless
otherwise specified.  

This is not the usual formulation of the Eilenberg-Watts theorem,
which we now recall.

\begin{theorem}\label{thm-watts-old}
Let $R$ and $S$ be rings if $F\mathcolon \Mod R\xrightarrow{}\Mod S$
is additive, right exact, and preserves direct sums, then $FR$ is an
$R\text{-}S$-bimodule and there is a natural isomorphism
\[
X\otimes_{R} FR\xrightarrow{} FX.  
\]
\end{theorem}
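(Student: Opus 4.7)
The plan is the classical one: put a bimodule structure on $FR$, construct a natural candidate map $\eta_X \mathcolon X \otimes_R FR \to FX$, and verify it is an isomorphism by resolving $X$ by free modules.

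First I would equip $FR$ with a left $R$-action compatible with its given right $S$-action. For each $r \in R$, left multiplication $\lambda_r \mathcolon R \to R$, $s \mapsto rs$, is right $R$-linear, so $F(\lambda_r) \mathcolon FR \to FR$ is right $S$-linear. Additivity and functoriality of $F$, together with the identities $\lambda_{r+r'} = \lambda_r + \lambda_{r'}$ and $\lambda_{rr'} = \lambda_r \circ \lambda_{r'}$, show that $r \cdot n := F(\lambda_r)(n)$ defines a left $R$-action on $FR$ that commutes with the right $S$-action. This is the asserted $R$-$S$-bimodule structure.

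Next I would construct $\eta_X$. For $x \in X$, the map $\rho_x \mathcolon R \to X$, $r \mapsto xr$, is right $R$-linear, so $F(\rho_x) \mathcolon FR \to FX$ is $S$-linear. The assignment $(x, n) \mapsto F(\rho_x)(n)$ is additive in both variables (using $\rho_{x+y} = \rho_x + \rho_y$ and additivity of $F$) and $R$-balanced, the last via the identity $\rho_{xr} = \rho_x \circ \lambda_r$. It therefore factors through an $S$-linear map $\eta_X \mathcolon X \otimes_R FR \to FX$, which is natural in $X$ since $\rho_{f(x)} = f \circ \rho_x$ for any $R$-linear $f \mathcolon X \to Y$.

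To show $\eta_X$ is an isomorphism, I would note first that $\eta_R$ is the canonical isomorphism $R \otimes_R FR \cong FR$. Since both $- \otimes_R FR$ and $F$ preserve direct sums (the former always, the latter by hypothesis) and $\eta$ is natural, $\eta_X$ is an isomorphism whenever $X$ is free. For general $X$, pick a free presentation $P_1 \to P_0 \to X \to 0$; both functors are right exact (tensor product always, $F$ by hypothesis), so naturality of $\eta$ supplies a commutative diagram with exact rows in which the vertical maps at $P_1$ and $P_0$ are isomorphisms, and a cokernel argument (or the five lemma) then forces $\eta_X$ to be an isomorphism. There is no serious obstacle here; the main care-point is keeping the sidedness of actions straight so that $\lambda_r$ and $\rho_x$ are genuinely right $R$-linear and the $R$-balancing condition really holds.
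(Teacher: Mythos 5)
Your proof is correct and follows exactly the route the paper indicates for the classical theorem: $\tau$ (your $\eta$) is an isomorphism at $R$, direct-sum preservation extends this to free modules, and right exactness plus a free presentation handles general $X$ — precisely the argument the paper sketches in the introduction and in the remark following Theorem~2.2. The bimodule structure via $F(\lambda_r)$ and the balancing identity $\rho_{xr}=\rho_x\circ\lambda_r$ are handled correctly, so there is nothing to add.
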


These two theorems are equivalent, however, since a right exact
additive functor automatically preserves coequalizers, and then the
adjoint functor theorem implies that any colimit-preserving functor
between such nice categories is a left adjoint.  

We take the position that if $\cat{M}$ is a closed symmetric monoidal
category, $E$ and $E'$ are monoids in $\cat{M}$, and $F\mathcolon \Mod
E\xrightarrow{}\Mod E'$ is a left adjoint and an $\cat{M}$-functor
(the analogue of additive), then $F$ is a sort of generalized
$E\text{-}E'$-bimodule.  

We therefore make the following definition.  Given monoids $E$ and
$E'$ in a closed symmetric monoidal category $\cat{M}$ and an
$E\text{-}E'$-bimodule $N$, define $T_{N}\mathcolon \Mod
E\xrightarrow{}\Mod E'$ by $T_{N} (M)=M\otimes_{E}N$.  We will define
this more precisely in the first section below.  Similarly, for an
$E\otimes E'$-module $N$, define $S_{N}\mathcolon \Mod E\xrightarrow{}
(\Mod E')^{\textup{op}}$ by $S_{N} (M)=\Hom_{E} (M,N)$.  

\begin{definition}\label{defn-self}
Suppose $\cat{M}$ is a closed symmetric monoidal category.  We say
that $\cat{M}$ is \textbf{left self-contained} if, for every pair of
monoids $E, E'$ in $\cat{M}$ and every $\cat{M}$-functor $F\mathcolon
\Mod E\xrightarrow{}\Mod E'$ that is a left adjoint, there is an
$E\text{-}E'$-bimodule $N$ and a natural isomorphism $T_{N}\cong F$.
Similarly, $\cat{M}$ is \textbf{right self-contained} if, for every
pair of monoids $E,E'$ in $\cat{M}$ and every $\cat{M}$-functor
$F\mathcolon \Mod E\xrightarrow{} (\Mod E')^{\textup{op}}$ that is a
left adjoint, there is an $E\otimes E'$-module $N$ and a natural
isomorphism $F\cong S_{N}$.  We say that $\cat{M}$ is
\textbf{self-contained} if it is both left and right self-contained.
\end{definition}

The Eilenberg-Watts theorem stated above is then the assertion that
$\Ab$ is left self-contained.  In fact, Eilenberg~\cite{eilenberg}
also proved that $\Ab$ is right self-contained.  It would certainly be
interesting to go on from here to try to find out when a closed
symmetric monoidal category is self-contained, but the author is
primarily interested in homotopy theory, so we go in a different
direction.

Instead, we assume that we can also do homotopy theory in $\cat{M}$.
So we assume that $\cat{M}$ is a model category in the sense of
Quillen~\cite{quillen-htpy}.  The key idea in model categories is that
isomorphism is not the equivalence relation one cares about.  Instead,
there is a notion of weak equivalence.  Formally inverting the weak
equivalences gives the homotopy category $\ho \cat{M}$.  We would like
functors $F$ on $\cat{M}$ to induce functors $LF$ on $\ho \cat{M}$ in
a natural way, but this is generally true only for \textbf{left
Quillen} functors $F$ (for this and other model category terminology,
see~\cite{hirschhorn} or~\cite{hovey-model}).  

Thus we make the following definition. 

\begin{definition}\label{defn-self-htpy}
Suppose $\cat{M}$ is a closed symmetric monoidal model category.  We
say that $\cat{M}$ is \textbf{homotopically left self-contained} if,
for every pair of monoids $E,E'$ in $\cat{M}$ and every left Quillen
$\cat{M}$-functor $F\mathcolon \Mod E\xrightarrow{}\Mod E'$, there is
an $E\text{-}E'$-bimodule $N$ and a natural isomorphism
$LT_{N}\xrightarrow{}LF$ of functors on $\ho \Mod E$.  Similarly, $F$
is \textbf{homotopically right self-contained} if, for every pair of
monoids $E,E'$ in $\cat{M}$ and every left Quillen $\cat{M}$-functor
$F\mathcolon \Mod E\xrightarrow{} (\Mod E')^{\textup{op}}$, there is
an $E\otimes E'$-module $N$ and a natural isomorphism
$LF\xrightarrow{}LS_{N}$ of functors on $\ho \Mod E$.  And $\cat{M}$
is \textbf{homotopically self-contained} if it is both left and right
homotopically self-contained.
\end{definition}

For this definition to make precise sense, we need to assume enough
about $\cat{M}$ to be sure that $\Mod E$ inherits a model structure
from that of $\cat{M}$ for all monoids $E$, where the weak
equivalences (resp. fibrations) in $\ho \Mod E$ are maps that are weak
equivalences (resp. fibrations) when thought of as maps of $\cat{M}$.

The main result of this paper is then the following theorem, proved in
Section~\ref{sec-examples}.

\begin{theorem}\label{thm-main}
The following closed symmetric monoidal model categories are
homotopically self-contained. 
\begin{enumerate}
\item \ulp Compactly generated, weak Hausdorff\urp topological
spaces. 
\item Simplicial sets.  
\item Chain complexes of abelian groups. 
\item Symmetric spectra 
\item Orthogonal spectra 
\item $S$-modules.  
\end{enumerate}
\end{theorem}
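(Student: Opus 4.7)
The plan is to prove a general abstract criterion for a closed symmetric monoidal model category $\cat{M}$ to be homotopically self-contained, and then verify this criterion for each of the six examples. For the left-self-contained direction, given a left Quillen $\cat{M}$-functor $F\mathcolon \Mod E \to \Mod E'$, the natural candidate for the bimodule is $N = F(E^{c})$, where $E^{c}$ is a cofibrant replacement of $E$ regarded as a right $E$-module. The $\cat{M}$-enrichment of $F$ equips $N$ with a left $E$-action (so that $N$ is an $E\text{-}E'$-bimodule) and produces, for every $M \in \Mod E$, an $\cat{M}$-natural comparison map $M \otimes_{E} N \to F(M)$ that is an isomorphism when $M=E^{c}$. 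Since $LT_{N}$ and $LF$ are both left Quillen and hence preserve homotopy colimits, and since every object of $\ho \Mod E$ is a homotopy colimit of copies of $E$, the comparison will be a weak equivalence on all of $\Mod E$. The right-self-contained direction is symmetric: take $N = F(E^{c})$ with the enriched Hom replacing $\otimes_{E}$, and reduce to the same homotopy-colimit argument.

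The abstract criterion to isolate reads roughly: \emph{if $\cat{M}$ is a cofibrantly generated closed symmetric monoidal model category satisfying the monoid axiom of Schwede--Shipley, whose unit is cofibrant or at least sufficiently flat, then $\cat{M}$ is homotopically self-contained.} The monoid axiom guarantees that $\Mod E$ inherits a cofibrantly generated model structure for every monoid $E$ and that tensoring a cofibrant module with a weak equivalence yields a weak equivalence, which is exactly what is needed to pass between $E$ and $E^{c}$ and to check the comparison map on generators. With this theorem in hand, items (1)--(3) of the list (topological spaces, simplicial sets, chain complexes of abelian groups) follow immediately: in each case the unit is cofibrant and the monoid axiom is standard. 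For the three spectrum categories (4)--(6), one invokes the monoidal model structures of Hovey--Shipley--Smith, Mandell--May, and EKMM, which have been designed precisely so that $\Mod E$ is a well-behaved model category for each ring spectrum $E$.

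The main obstacle is to formulate the abstract criterion sharply enough to handle all six examples uniformly, and in particular to accommodate symmetric and orthogonal spectra, whose natural stable model structures do not have a cofibrant unit. I would address this either by switching to the positive stable model structures and using an $\cat{M}$-cofibrant resolution of the sphere, or by relaxing the hypothesis on the unit to a flatness condition (for weak equivalences of bimodules) that is verified in each of these cases. For $S$-modules the unit is already cofibrant in the EKMM sense, so no such manoeuvre is required there. Once this technical point is settled, the verification for each category reduces to citing the standard monoidal model-categorical properties already established in the literature for these six frameworks.
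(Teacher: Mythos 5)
Your overall architecture is the same as the paper's heuristic (take $N=F$ of a cofibrant replacement of $E$, build the enriched comparison $\tau\mathcolon M\otimes_{E}N\to FM$, and propagate from $E$ using homotopy-colimit preservation), but the two load-bearing steps are asserted rather than proved, and they are exactly where the content lies. First, "every object of $\ho \Mod E$ is a homotopy colimit of copies of $E$, hence the comparison is an equivalence everywhere" is a theorem only in the stable, monogenic situation, where it is the statement $\loc{E}=\ho \Mod E$ for $E$ a compact weak generator (Theorem~\ref{thm-stable-monogenic} via Theorem~\ref{thm-derived-stable}); it does not follow from cofibrant generation, the monoid axiom, and a cofibrant (or flat) unit, so your proposed abstract criterion is stronger than your sketch can deliver. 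Even granting generation, the propagation step itself needs a theorem: a natural transformation of left Quillen functors that is a weak equivalence on the domains and codomains of the generating cofibrations of $\Mod E$ (the extended modules $A\otimes E$) induces an isomorphism of derived functors only after a cube-lemma/transfinite-induction argument requiring cofibrant domains or left properness (Theorems~\ref{thm-derived-cofibrant} and~\ref{thm-derived-proper}); and for $T_{N}$ to be left Quillen at all you must check $N=F(E^{c})$ is cofibrant as a right $E'$-module (Proposition~\ref{prop-enriched-general}). Second, the cases you call immediate, topological spaces and simplicial sets, are the hard ones: they are not stable, no localizing-subcategory argument is available, and since $F$ is only laxly compatible with tensors, the structure map $A\otimes FE\to F(A\otimes E)$ is not automatically a weak equivalence. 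One must prove it is one for $A=\partial\Delta[n],\ \Delta[n]$ (resp.\ $S^{n-1},D^{n}$), which the paper does by an induction on $n$ using the cube lemma and the fact that an $\cat{M}$-functor preserves simplicial (or topological) homotopies (Lemma~\ref{lem-homotopy}, Theorems~\ref{thm-simplicial} and~\ref{thm-topological}). Without these ingredients your argument is the paper's introductory heuristic, not a proof. Also, $\tau$ is an isomorphism at $E$ itself, only a weak equivalence at $E^{c}$, a small but real point since the whole induction starts there.

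Your treatment of the spectrum cases also has the facts reversed. In the stable model structures relevant here, the unit \emph{is} cofibrant in symmetric spectra and in orthogonal spectra, so no passage to positive model structures or flat-unit workaround is needed (positivity matters for commutative ring spectra, which play no role); these cases, like chain complexes, follow directly from Theorem~\ref{thm-stable-monogenic} once strong cofibrant generation is checked (with a mild smallness caveat in the topological settings). It is EKMM $S$-modules where the unit is \emph{not} cofibrant, the one case you say requires no maneuver; there the extra input needed is that $\Mod E$ is still stable and $E$ is still a compact weak generator of $\ho \Mod E$ for every $S$-algebra $E$, which the paper extracts from Chapter~III of Elmendorf--Kriz--Mandell--May rather than from cofibrancy of the unit.
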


The last three are all models of stable homotopy theory.  $S$-modules
were introduced in~\cite{elmendorf-kriz-mandell-may}, orthogonal
spectra in~\cite{mandell-may-schwede-shipley}, and symmetric spectra
in~\cite{hovey-shipley-smith}.  

To approach Theorem~\ref{thm-main}, we recall the proof of the
Eilenberg-Watts theorem.  The first step is to prove that, if
$F\mathcolon \Mod R\xrightarrow{}\Mod S$ is an additive functor, then
$FR$ is an $R\text{-}S$-bimodule, and there
is a natural transformation 
\[
M\otimes_{E} FE \xrightarrow{} FM.  
\]
This step is completely general. The following proposition is proved
as Proposition~\ref{prop-general-map}, where we will define any
unfamiliar terms.

\begin{proposition}\label{prop-enriched}
Suppose $\cat{M}$ is a closed symmetric monoidal category, $E$ and
$E'$ are monoids in $\cat{M}$, and 
\[
F\mathcolon \Mod E \xrightarrow{}\Mod E'
\]
is an $\cat{M}$-functor.  Then $FE$ is an $E\text{-}E'$-bimodule, and there
is a natural transformation 
\[
\tau \mathcolon X \otimes_{E} FE \xrightarrow{} FX.
\]
\end{proposition}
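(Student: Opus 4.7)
The plan is to build everything from the $\cat{M}$-functor structure on $F$, which supplies for each pair $M, N$ of right $E$-modules a morphism $F_{M,N}\mathcolon \Hom_E(M, N) \to \Hom_{E'}(FM, FN)$ in $\cat{M}$ compatible with enriched composition and units. Two things must be produced: a left $E$-action on $FE$ commuting with the given right $E'$-action, and a natural family of maps $X \otimes FE \to FX$ in $\Mod E'$ that descends through the coequalizer defining $\otimes_E$.

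First I would equip $FE$ with its left $E$-action. The canonical identification $E \cong \Hom_E(E, E)$, which is adjoint to the right action $E \otimes E \to E$, composed with $F_{E, E}$ yields a map $E \to \Hom_{E'}(FE, FE)$, and the defining adjunction for $\Hom_{E'}$ turns this into a morphism $\alpha\mathcolon E \otimes FE \to FE$ in $\Mod E'$. The unit and associativity axioms for $\alpha$ translate directly into the unit and composition axioms for $F$ as an $\cat{M}$-functor applied at $E, E, E$; commutativity with the right $E'$-action is automatic because $\alpha$ already lives in $\Mod E'$.

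For $\tau$, the unit isomorphism $X \cong \Hom_E(E, X)$ followed by $F_{E, X}$ produces a map $X \to \Hom_{E'}(FE, FX)$, whose adjoint is a morphism $\tilde\tau_X\mathcolon X \otimes FE \to FX$ of right $E'$-modules. To descend $\tilde\tau_X$ to $X \otimes_E FE$ I must check that it coequalizes the parallel pair $X \otimes E \otimes FE \rightrightarrows X \otimes FE$ defined by the right $E$-action on $X$ and by $\alpha$. After unwinding the closed-monoidal adjunction on both sides, the two composites both factor through the enriched composition $\Hom_E(E, E) \otimes \Hom_E(E, X) \to \Hom_E(E, X)$ carried across by $F$; they agree by the composition axiom for $F_{-,-}$ at $E, E, X$. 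Naturality of the resulting $\tau_X$ in $X$ is immediate, since both $X \cong \Hom_E(E, X)$ and $F_{E, -}$ are natural.

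The only substantive step is this last coequalizer verification. It is a bookkeeping diagram chase through the closed-monoidal adjunction which, once reorganized, is just the enriched composition axiom for $F$. The main obstacle, if there is one, lies in keeping the unit isomorphisms $E \cong \Hom_E(E, E)$ and $X \cong \Hom_E(E, X)$ straight in their enriched form, since they are the bridge between the module-theoretic formulas and the abstract enrichment.
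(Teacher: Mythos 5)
Your construction is exactly the paper's: the left $E$-action on $FE$ comes from $E\cong\Hom_E(E,E)\to\Hom_{E'}(FE,FE)$ by adjunction, and $\tau$ comes from $X\cong\Hom_E(E,X)\to\Hom_{E'}(FE,FX)$, with the descent through the coequalizer justified by the enriched composition axiom (the paper phrases this as $\phi_X$ being a map of right $E$-modules, which is the same check). The proposal is correct and matches the paper's proof in both structure and the identification of where the real verification lies.
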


Given $\tau$, the proof of the usual Eilenberg-Watts theorem now proceeds
noting that $\tau$ is an isomorphism when $M=R$, both sides preserve
colimits, and $R$ generates $\Mod R$ under colimits.  There are
partial generalizations of this to the general case.  The nicest we
have is Theorem~\ref{thm-strict}, which asserts that $\tau$
is a natural isomorphism when $F$ is a strict $\cat{M}$-functor
(defined in Section~\ref{sec-strict}) and preserves coequalizers.

However, our main interest is in homotopical algebra, where it is a
mistake to ask whether a natural transformation is an isomorphism for
all $X$.  Instead, we ask whether it is a weak equivalence for all
cofibrant $X$; equivalently, we ask when the derived natural
transformation of $\tau $ is an isomorphism of functors on $\ho \Mod
E$.  For this to make sense, we need to assume enough about $\cat{M}$
so that we get model structures on $\Mod E$ and $\Mod E'$, and we need
to assume $F$ is a left Quillen functor as mentioned above.  We then
need a general theorem about when the derived natural transformation
$L\tau$ of a natural transformation $\tau$ of left Quillen functors is
an isomorphism. The author thinks that such a theorem should have been
proved before, but knows of no published reference.  The following
theorem is a combination of Theorem~\ref{thm-derived-cofibrant} and
Theorem~\ref{thm-derived-stable}.

\begin{theorem}\label{thm-Quillen}
Suppose $\cat{C}$ and $\cat{D}$ are model categories, $F,G\mathcolon
\cat{C}\xrightarrow{}\cat{D}$ are left Quillen functors, and $\tau
\mathcolon F\xrightarrow{}G$ is a natural transformation.  Suppose
that one of the two following conditions hold. 
\begin{enumerate}
\item $\cat{C}$ and $\cat{D}$ are stable, and there is a class
$\cat{G}$ of objects of $\ho \cat{C}$ such that the localizing
subcategory generated by $\cat{G}$ is all of $\ho \cat{C}$ and
$(L\tau) (X)$ is an isomorphism for all $X\in \cat{G}$.  
\item $\cat{C}$ is cofibrantly generated such that the domains of the
generating cofibrations of $\cat{C}$ are cofibrant, and $(L\tau) (X)$
is an isomorphism whenever $X$ is a domain or codomain of one of the
generating cofibrations.
\end{enumerate}
Then $L\tau$ is a natural isomorphism of functors on $\ho \cat{C}$.  
 \end{theorem}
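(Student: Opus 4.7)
The plan, in both cases, is to analyze the class $\cat{E} = \{X \in \ho\cat{C} : (L\tau)(X) \text{ is an isomorphism}\}$ and show it exhausts $\ho\cat{C}$. In both situations, $LF$ and $LG$ are left adjoints on the homotopy category (being derived from left Quillen left adjoints), so they preserve homotopy colimits; at a minimum this makes $\cat{E}$ closed under retracts, and the two cases then differ in which further closure properties of $\cat{E}$ are exploited.

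For case \textup{(2)}, I would verify on the point-set level that the class $\cat{E}'$ of cofibrant objects $X$ with $\tau_X$ a weak equivalence contains every cofibrant object; this is equivalent to $L\tau$ being a natural isomorphism. Because $F$ and $G$ are left Quillen, they preserve cofibrations and cofibrant objects, and because they are left adjoints they preserve coproducts, pushouts, and transfinite compositions. The hypothesis that the domains of the generating cofibrations are cofibrant ensures, by a standard induction, that every stage of the $I$-cell construction is cofibrant and that the pushouts and transfinite compositions used to build it are along cofibrations between cofibrant objects, hence are honest homotopy pushouts and homotopy colimits after applying $F$ and $G$. Combined with the assumption that $\tau$ is already a weak equivalence on domains and codomains of $I$, a transfinite induction shows $\tau_X$ is a weak equivalence for every $I$-cell complex $X$; since every cofibrant object is a retract of such a complex, the conclusion follows.

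For case \textup{(1)}, I would use the triangulated structure on $\ho\cat{C}$ and $\ho\cat{D}$ furnished by stability. Left derived functors of left Quillen functors between stable model categories are exact functors of triangulated categories, so $L\tau$ is a natural transformation of exact functors, and the triangulated five-lemma shows that $\cat{E}$ satisfies two-out-of-three on distinguished triangles and therefore is a triangulated subcategory. Since $LF$ and $LG$ preserve arbitrary coproducts, $\cat{E}$ is also closed under coproducts, and it is trivially closed under retracts. Thus $\cat{E}$ is a localizing subcategory of $\ho\cat{C}$ containing $\cat{G}$; by hypothesis the localizing subcategory generated by $\cat{G}$ is all of $\ho\cat{C}$, so $\cat{E} = \ho\cat{C}$.

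The main obstacle will be the care required in case \textup{(2)} to identify the pushouts and transfinite colimits in the $I$-cell construction with genuine homotopy colimits after application of $F$ and $G$; this is precisely the role of the cofibrancy hypothesis on the domains of the generating cofibrations, without which the intermediate terms of the cell filtration need not be cofibrant and the image diagrams need not have the correct homotopy type. Case \textup{(1)}, by contrast, is almost formal once one accepts that left derived functors of left Quillen functors are exact and coproduct-preserving, the only subtlety being the verification that $L\tau$ really is a \emph{natural} transformation of triangulated functors, which reduces to compatibility of $\tau$ with the cylinder and cofiber constructions used to define the triangulation.
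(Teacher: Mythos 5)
Your proposal is correct and follows essentially the same route as the paper: case (1) is exactly the paper's argument that the class where $L\tau$ is an isomorphism is a localizing subcategory (using exactness and coproduct-preservation of $LF$, $LG$), and case (2) is the paper's transfinite induction over the $I$-cell filtration followed by a retract argument. The homotopy-invariance steps you flag as the main obstacle are precisely what the paper supplies via the cube lemma~\cite[Lemma~5.2.6]{hovey-model} at successor stages and (a transfinite version of)~\cite[Proposition~15.10.12]{hirschhorn} at limit stages.
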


We can then combine Theorem~\ref{thm-Quillen} with
Proposition~\ref{prop-enriched} to get versions of the Eilenberg-Watts
theorem in homotopical algebra.  Here are two of them, proved as
Theorem~\ref{thm-ultimate} and Theorem~\ref{thm-stable-monogenic}.
They use some terms we will define later.  

\begin{theorem}\label{thm-watts-homotopy}
Suppose $\cat{M}$ is a strongly cofibrantly generated, symmetric
monoidal model category.  Let $E$ and $E'$ be monoids in $\cat{M}$,
and $F\mathcolon \Mod E\xrightarrow{}\Mod E'$ be a left Quillen
$\cat{M}$-functor.  Suppose one of the hypotheses below holds.  
\begin{enumerate}
\item The domains of the generating cofibrations of $\cat{M}$ are
cofibrant, and the composite 
\[
A\otimes FQE\xrightarrow{}F (A\otimes QE) \xrightarrow{} F (A\otimes E)
\]
is a weak equivalence when $A$ is a domain or codomain of one of the
generating cofibrations of $\cat{M}$\usc \ or 
\item $\cat{M}$ is stable and monogenic with a cofibrant unit.  
\end{enumerate}
Then the natural transformation
\[
QX\otimes_{E} FQE \xrightarrow{} X\otimes_{E} FE \xrightarrow{} FQX
\]
is a natural isomorphism of functors on $\ho \Mod E$.  
\end{theorem}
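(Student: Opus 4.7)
The plan is to apply Theorem~\ref{thm-Quillen} to the natural transformation $\tau\colon X \otimes_{E} FE \to FX$ furnished by Proposition~\ref{prop-enriched}, after first cofibrantly replacing $FE$. Since $QE$ is cofibrant in $\Mod E$ and $F$ is a left Quillen functor, $FQE$ is cofibrant; under the strong cofibrant generation hypothesis this is enough to make $T_{FQE}\colon \Mod E \to \Mod E'$ a left Quillen functor, and the weak equivalence $FQE \to FE$ together with $\tau$ assembles into a natural transformation $T_{FQE} \to F$ of left Quillen functors whose derived value at $X$ is exactly the composite $QX \otimes_{E} FQE \to X \otimes_{E} FE \to FQX$ in the statement. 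It therefore suffices to verify one of the two conditions of Theorem~\ref{thm-Quillen}.

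Under hypothesis (1), I would apply Theorem~\ref{thm-Quillen}(2). Strong cofibrant generation supplies generating cofibrations of $\Mod E$ of the form $E \otimes i$ for $i$ a generating cofibration of $\cat{M}$, so their domains and codomains are the objects $E \otimes A$ with $A$ a domain or codomain of a generating cofibration of $\cat{M}$. Since such $A$ is cofibrant in $\cat{M}$ (by the extra assumption on domains, and automatically for codomains), $A \otimes QE$ may be used as a cofibrant replacement of $E \otimes A$ in $\Mod E$. The left-adjoint $\cat{M}$-functor structure of $F$ then identifies the derived transformation at $X = E \otimes A$ with the composite $A \otimes FQE \to F(A \otimes QE) \to F(A \otimes E)$, which is a weak equivalence by hypothesis.

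Under hypothesis (2), I would apply Theorem~\ref{thm-Quillen}(1) with $\cat{G} = \{E\}$. Monogenicity of $\cat{M}$ is inherited by $\Mod E$ through the free-forgetful adjunction, so $E$ generates $\ho \Mod E$ as a localizing subcategory. Cofibrancy of the unit of $\cat{M}$ makes $E$ itself cofibrant in $\Mod E$, so one may take $QE \to E$ to be the identity, and the derived transformation at $X = E$ collapses to the canonical isomorphism $E \otimes_{E} FQE \cong FQE$.

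The main obstacle, in my view, is the enriched bookkeeping: verifying that $T_{FQE}$ really is left Quillen with the hoped-for behaviour on homotopy categories, that $Q(E \otimes A)$ and $A \otimes QE$ agree up to the appropriate weak equivalence, and that the $\cat{M}$-functor structural maps of $F$ fit into the commutative diagram identifying the derived transformation at $X = E \otimes A$ with the composite appearing in hypothesis~(1). Once these compatibilities are pinned down, the application of Theorem~\ref{thm-Quillen} is formal.
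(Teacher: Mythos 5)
Your outline is essentially the paper's own proof: one observes that $FQE$ is cofibrant as a right $E'$-module, so by Proposition~\ref{prop-enriched-general} the map $X\otimes_{E}FQE\to FX$ is a natural transformation of left Quillen functors, and then one applies Theorem~\ref{thm-derived-cofibrant} (Theorem~\ref{thm-Quillen}(2)) under hypothesis~(1) and Theorem~\ref{thm-derived-stable} (Theorem~\ref{thm-Quillen}(1)) with $\cat{G}=\{E\}$ under hypothesis~(2), exactly as in Theorems~\ref{thm-ultimate} and~\ref{thm-stable-monogenic}. One step in your hypothesis~(1) argument is shaky, though also unnecessary: you assert that $A\otimes QE$ may be used as a cofibrant replacement of $A\otimes E$ in $\Mod E$, but under hypothesis~(1) the unit of $\cat{M}$ is not assumed cofibrant, so $E$ need not be cofibrant in $\Mod E$; since $A\otimes -$ is merely a left Quillen functor, it is not guaranteed to carry the weak equivalence $QE\to E$ to a weak equivalence, so the claim does not follow from the stated hypotheses. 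You do not need it: because $A$ is cofibrant in $\cat{M}$, the domain/codomain $A\otimes E$ of a generating cofibration of $\Mod E$ is already cofibrant, so the derived transformation there is detected by the point-set value $(A\otimes E)\otimes_{E}FQE\cong A\otimes FQE\to F(A\otimes E)$, which is identified with the composite $A\otimes FQE\to F(A\otimes QE)\to F(A\otimes E)$ by naturality of the structure map of $F$ and its compatibility with $\tau$ on extended modules. Under hypothesis~(2) you should also record, as the paper does, that the cofibrant unit makes $\Mod E$ and $\Mod E'$ stable (the suspension is inverted by tensoring with $S^{-1}$), so that Theorem~\ref{thm-derived-stable} indeed applies; with $E$ cofibrant the value at $E$ is the canonical isomorphism, as you say.
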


Theorem~\ref{thm-main}, asserting that standard model categories are
homotopically self-contained, now follows.  Although we do not prove
Theorem~\ref{thm-main} in this way, a good way to think about it is
that $\tau$ is obviously an isomorphism for $X=E$, both the
functors $LT_{N}$ and $LF$ preserve suspensions and homotopy
colimits, and in the standard model categories, $E$ generates all of
$\ho \Mod E$ under the operations of homotopy colimits and
suspensions.  

We point out that Keller's work on DG-categories implies in particular
that chain complexes of abelian groups are homotopically
self-contained~\cite[Section~6.4]{keller}, in a stronger form than the
one we give.  We can recover Keller's full result by our methods using
the special features of the model structure on chain complexes of
abelian groups.

Note that the usual Eilenberg-Watts theorem is closely related to
Morita theory.  After all, if you have an equivalence of additive
categories $F\mathcolon \Mod R\xrightarrow{}\Mod S$, it satisfies the
hypotheses of the Eilenberg-Watts theorem, so must be given by
tensoring with a bimodule.  This is the beginning of Morita theory.
Similarly, our versions of the Eilenberg-Watts theorem are related to
the Morita theory of ring spectra due to Schwede and
Shipley~\cite{schwede-morita}.

The original motivation for this work was the important, yet
disturbing, paper of Christensen, Keller, and
Neeman~\cite{christensen-keller-neeman}, where they proved that not
all homology theories on the derived category $\cat{D} (R)$ of a
sufficiently complicated ordinary ring $R$ are representable.  This
followed work of Beligiannis~\cite{beligiannis}, who proved that not
every morphism between representable homology functors on $\cat{D}
(R)$ is representable.  Note that there is no problem with
representability of cohomology functors and morphisms between them.
This was rather a blow, and the author is not certain the field has
adequately adjusted to it yet.  This paper began by trying to find a
property of a homology functor $h$ that would ensure that it is
representable.  This question is discussed in the last section of the
paper.  

The author owes a debt of thanks to his former student Manny Lopez,
who first introduced him to the Eilenberg-Watts theorem.  He also
wishes to express his debt to Dan Christensen, Bernhard Keller, and
Amnon Neeman, both for writing the
paper~\cite{christensen-keller-neeman} and for their comments on an
early draft.

We should also mention Neeman's paper~\cite{neeman-dual}.
This paper considers representability for covariant exact functors on
triangulated categories that preserve \textbf{products}.  The author
thinks this needs further investigation, even in the abelian category
setting, although the situation is so different that such an
investigation would not reasonably fit into this paper.

\tableofcontents

\section{$\cat{M}$-functors}\label{sec-functors}

In this section, $\cat{M}$ is simply a closed symmetric monoidal
category with all finite colimits and limits.  We denote the monoidal
product by $\otimes$, the closed structure by $\Hom$, and the unit by
$S$.  The purpose of this section is to introduce enough background to
prove the following proposition.

\begin{proposition}\label{prop-general-map}
Suppose $\cat{M}$ is a closed symmetric monoidal category with finite
colimits and limits, and $E$ and $E'$ are monoids in $\cat{M}$.  If
$F\mathcolon \Mod E\xrightarrow{}\Mod E'$ is an $\cat{M}$-functor,
then $FE$ is an $E\text{-}E'$-bimodule, and there is a natural transformation
\[
\tau \mathcolon X\otimes_{E} FE \xrightarrow{} FX 
\]
of $E'$-modules that is an isomorphism when $X=E$.  Similarly, if $F$
is a contravariant $\cat{M}$-functor, then $FE$ is an
$E\otimes E'$-module, and there is a natural transformation
\[
\tau \mathcolon FX \xrightarrow{} \Hom_{E} (X, FE)
\]
of $E'$-modules that is an isomorphism when $X=E$.
\end{proposition}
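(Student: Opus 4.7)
The plan is to unpack what it means for $F$ to be an $\cat{M}$-functor: $F$ comes equipped with a natural morphism $F_{M,N}\mathcolon \Hom_{E}(M,N) \to \Hom_{E'}(FM,FN)$ in $\cat{M}$, compatible with the composition and unit laws of the $\cat{M}$-enrichments of $\Mod E$ and $\Mod E'$. The key input is that $\Mod E$ is $\cat{M}$-enriched in such a way that $\Hom_{E}(E,N) \cong N$ naturally in $N$, by evaluation at the unit, and this identification realizes $\Hom_{E}(E,E) \cong E$ as an isomorphism of monoids in $\cat{M}$.

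First I would establish the bimodule structure on $FE$. Setting $M = N = E$ in $F_{M,N}$ gives, after the identification above, a morphism $E \to \Hom_{E'}(FE,FE)$ which the unit and composition axioms of an $\cat{M}$-functor force to be a morphism of monoids. Adjointing across the enriched tensor-hom adjunction for $\Mod E'$ yields a morphism $E \otimes FE \to FE$ in $\Mod E'$; since it is constructed inside $\Mod E'$, the resulting left $E$-action automatically commutes with the right $E'$-action, making $FE$ an $E\text{-}E'$-bimodule.

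Next I would apply the same recipe for arbitrary $X$: compose the isomorphism $X \cong \Hom_{E}(E,X)$ with $F_{E,X}$ to get $X \to \Hom_{E'}(FE,FX)$ in $\cat{M}$ and adjoint to obtain a morphism $\widetilde{\tau}\mathcolon X \otimes FE \to FX$ in $\Mod E'$. The crucial step is to show that $\widetilde{\tau}$ coequalizes the parallel pair $X \otimes E \otimes FE \rightrightarrows X \otimes FE$ defining $X \otimes_{E} FE$, one map using the right $E$-action on $X$ and the other the left $E$-action on $FE$ just constructed. This is a diagram chase: both composites are traced back, via the naturality of $F_{-,-}$ in its first variable applied to the action morphism $X \otimes E \to X$ in $\Mod E$, to the same morphism. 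That produces the desired $\tau\mathcolon X \otimes_{E} FE \to FX$. Taking $X = E$, $\widetilde{\tau}$ is by construction the left $E$-action on $FE$, so $\tau$ is the canonical isomorphism $E \otimes_{E} FE \cong FE$.

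The contravariant case is parallel with one twist: the structure map $F_{M,N}\mathcolon \Hom_{E}(M,N) \to \Hom_{E'}(FN,FM)$ reverses composition, so the induced morphism $E \to \Hom_{E'}(FE,FE)$ is an anti-homomorphism of monoids, i.e.\ a monoid map from $E^{\textup{op}}$. Using the symmetry of $\cat{M}$, this endows $FE$ with a second, commuting right action compatible with the right $E'$-action, which is precisely the data of an $E \otimes E'$-module structure. The natural transformation then comes from $X \cong \Hom_{E}(E,X) \to \Hom_{E'}(FX,FE)$ by adjointing and using the symmetry of $\otimes$ to reshape into a morphism $FX \to \Hom(X,FE)$ in $\cat{M}$, and then verifying, via naturality in the first variable, that the image lies in the equalizer $\Hom_{E}(X,FE) \subseteq \Hom(X,FE)$. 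Again $X = E$ yields an isomorphism by evaluation at the unit. The main obstacle is bookkeeping: verifying the coequalizer factorization in the covariant case and the equalizer landing and correct choice of adjunction in the contravariant case. No deep machinery is required, but the enriched unit and composition identities have to be chased carefully.
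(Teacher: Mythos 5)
Your proposal is correct and follows essentially the same route as the paper: identify $X\cong\Hom_{E}(E,X)$, use the enrichment map $F_{E,X}$ to get $X\to\Hom_{E'}(FE,FX)$, adjoint to a map $X\otimes FE\to FX$ of $E'$-modules, and descend through the coequalizer defining $X\otimes_{E}FE$ because the map is compatible with the two $E$-actions; the bimodule structure on $FE$ and the contravariant case (via $E^{\textup{op}}$ and landing in the equalizer $\Hom_{E}(X,FE)$) are likewise handled exactly as in the paper. Your phrasing of the descent as a coequalizing check via naturality in the first variable is just an explicit version of the paper's remark that $\phi_{X}$ is a map of right $E$-modules.
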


For this proposition to make sense, we recall that a \textbf{monoid}
$E$ in a monoidal category $\cat{M}$ is the object $E$ equipped with a
unit map $S\xrightarrow{}E$ and a multiplication map $E\otimes
E\xrightarrow{}E$ that is unital and associative.  In this case, a
(right) module $X$ over $E$ is an object $X$ equipped with an action
map $X\otimes E\xrightarrow{}E$ that is associative and unital.  The
category $\Mod E$ is the category of such right modules and module
maps, which are of course maps in $\cat{M}$ compatible with the
actions.

The category $\Mod E$ then becomes a closed (left) module category
over $\cat{M}$, which means it is tensored, cotensored, and enriched
over $\cat{M}$.  In more detail, the tensor of $K\in \cat{M}$ and
$X\in \Mod E$, is the object $K\otimes X\in \Mod E$, where $E$ acts
only on $X$.  The cotensor of $K\in \cat{M}$ and $X\in \Mod E$ is
$\Hom (K,X)\in \Mod E$.  The action of $E$ is defined by 
\[
\Hom (K,X)\otimes E \cong \Hom (K,X) \otimes \Hom (S,E)
\xrightarrow{\otimes} \Hom (K, X\otimes E) \xrightarrow{}\Hom (K,X), 
\]
where the last map is induced by the action on $X$ (recall that $S$ is
the unit).  The enrichment $\Hom_{E} (X,Y)\in \cat{M}$ for $X,Y\in
\Mod E$ is defined as the equalizer of the evident two maps 
\[
\Hom (X,Y) \xrightarrow{} \Hom (X\otimes E,Y),
\]
one of which uses the action on $X$ and the other of which uses the
action on $Y$.  We have the usual adjunction isomorphisms 
\[
\Mod E (K\otimes X,Y) \cong \Mod E (X, \Hom (K,Y)) \cong \cat{M} (K,
\Hom_{E} (X,Y)).  
\]

Now, if $X$ is a right $E$-module and $Y$ is a left $E$-module, we can
form the tensor product $X\otimes_{E}Y\in \cat{M}$ as the coequalizer
of the two maps 
\[
X\otimes E\otimes Y\xrightarrow{}X\otimes Y.
\]
We can also define $E\text{-}E'$-bimodules in the usual way.  In this case,
the action map 
\[
X\otimes E' \xrightarrow{}X
\]
must be a map of left $E$-modules, which is equivalent to the left
$E$-action being a map of right $E'$-modules.  If $Y$ is an
$E\text{-}E'$-bimodule (with the evident definition), then $X\otimes_{E}Y$ is
in $\Mod E'$, because the action map
\[
X\otimes Y \otimes E'\xrightarrow{}X\otimes Y
\]
descends through the coequalizer.  Similarly, if $Y$ is both an
$E$-module and an $E'$-module in compatible fashion, which is
equivalent to $Y$ being an $E\otimes E'$-module, then $\Hom_{E} (X,Y)$
is naturally an $E'$-module.  

Finally, if $F\mathcolon \Mod E\xrightarrow{}\Mod E'$ is a functor, we
say that $F$ is an \textbf{$\cat{M}$-functor} if $F$ is compatible
with (one of, and hence all of) the tensor, cotensor, and enrichment
over $\cat{M}$.  That is, if $F$ is an $\cat{M}$-functor, then there
are natural maps 
\begin{gather*}
K\otimes FX\xrightarrow{} F (K\otimes X), \ \ \ \ F (\Hom
(K,X))\xrightarrow{} \Hom (K,FX) \\
\text{ and } \Hom_{E} (X,Y) \xrightarrow{} \Hom_{E'} (FX,FY)
\end{gather*}
satisfying all the properties one would expect.  If $F$ is
contravariant, then the roles of the tensor and cotensor are reversed,
so $F$ is an $\cat{M}$-functor when there are natural maps
\begin{gather*}
F (K\otimes X)\xrightarrow{}\Hom (K,FX), \ \ \ \ K\otimes
FX\xrightarrow{}F (\Hom (K,X)) \\
\text{ and } \Hom_{E} (X,Y) \xrightarrow{}\Hom_{E'} (FY,FX)
\end{gather*}
satisfying all the properties one would expect.  As an example of how
to get these maps from the enrichment 
\[
\Hom_{E} (X,Y)\xrightarrow{}\Hom_{E'} (FY,FX), 
\]
the adjoint to the identity of $K\otimes X$ is a map 
\[
K\xrightarrow{} \Hom_{E} (X,K\otimes X).  
\]
Composing this with the map 
\[
\Hom_{E} (X, K\otimes X) \xrightarrow{} \Hom_{E'} (F (K\otimes X), FX)
\]
and taking the adjoint gives us the map 
\[
F (K\otimes X)\xrightarrow{} \Hom (K,FX). 
\]

With this background in hand, we can now prove
Proposition~\ref{prop-general-map}.  

\begin{proof}
We begin with the covariant case.  For any monoid $E$, we have an
isomorphism of monoids
\[
E \cong \Hom (S,E) \cong \Hom_{E} (E,E). 
\]
If we had elements, we would say this takes $x\in E$ to the map
$E\xrightarrow{}E$ that is left multiplication by $x$.  Using the fact
that $F$ is an $\cat{M}$-functor, we get a map 
\[
\Hom_{E} (E,E) \xrightarrow{} \Hom_{E'} (FE,FE)
\]
of monoids.  The adjoint 
\[
E\otimes FE \xrightarrow{} FE
\]
in $\Mod E'$ to the composite map 
\[
E\xrightarrow{}\Hom_{E'} (FE,FE)
\]
maps $FE$ an $E\text{-}E'$-bimodule, though there are many details for the
conscientious reader to check.  

Similarly, the map 
\[
X\xrightarrow{} \Hom_{E} (E,X) \xrightarrow{} \Hom_{E'} (FE, FX)
\]
has adjoint the desired natural transformation 
\[
\tau \mathcolon X\otimes_{E} FE \xrightarrow{} FX
\]
of $E'$-modules.  There are even more details to check here.  In
particular, \emph{a priori} the adjoint to 
\[
\phi_{X}\mathcolon X\xrightarrow{} \Hom_{E'} (FE,FX)
\]
is just a map 
\[
X\otimes FE\xrightarrow{}FX
\]
of right $E'$-modules.  However, $\phi_{X}$ is in fact a map of right
$E$-modules, using the left $E$-module structure on $FE$ to make the
target of $\phi_{X}$ a right $E$-module.  This means that the adjoint
descends through the relevant coequalizer diagram to give the desired
map 
\[
\tau \mathcolon X\otimes_{E} FE \xrightarrow{} FX.  
\]

We now assume that $F$ is contravariant.  We get the right $E$-module
structure on the $E'$-module $FE$ via the adjoint to the map of
monoids
\[
E^{\textup{op}} \xrightarrow{}\Hom_{E} (E,E)^{\textup{op}}
\xrightarrow{}\Hom_{E'} (FE,FE).
\]
Here, if $X$ is a monoid, $X^{\textup{op}}$ is the monoid with the
reversed multiplication.  Similarly, we have the map
\[
X\cong \Hom_{E} (E,X) \xrightarrow{}\Hom_{E'} (FX, FE),
\]
which is in fact a map of right $E$-modules.  This has adjoint a map 
\[
FX\xrightarrow{}\Hom (X, FE),
\]
which in fact factors through $\Hom_{E} (X,FE)$, giving us the desired
natural transformation.  
\end{proof}

\section{A strict analog of the Eilenberg-Watts theorem}\label{sec-strict}

We are most interested in analogues of the Eilenberg-Watts theorem
that combine homotopy theory and algebra.  But we can also prove a
purely categorical version of the Eilenberg-Watts theorem, and we do
so in this section. 

We begin with a lemma about the structure of modules over a monoid $E$
in a symmetric monoidal category $\cat{M}$.  Define an $E$-module to
be \textbf{extended} if it is isomorphic to one of the form $X\otimes
E$, for some $X\in \cat{M}$.

\begin{lemma}\label{lem-E-module}
Suppose $\cat{M}$ is a symmetric monoidal category with all
coequalizers, and $E$ is a monoid in $\cat{M}$.  Then any $E$-module
is a coequalizer in $\Mod E$ of a parallel pair of morphisms
$P_{1}\rightrightarrows P_{0}$ between extended $E$-modules.
\end{lemma}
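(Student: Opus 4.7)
My plan is to use the standard bar/monadic presentation of an algebra over a monad. The forgetful functor $U \colon \Mod E \to \cat{M}$ has left adjoint $F \colon X \mapsto X \otimes E$, and the extended $E$-modules are precisely the free algebras, i.e.\ objects of the form $FX$. For any $E$-module $M$ with action $\mu_M \colon M \otimes E \to M$, I would consider the parallel pair
\[
M\otimes E\otimes E \rightrightarrows M\otimes E
\]
where one arrow is $\mu_M \otimes \mathrm{id}_E$ (the counit of the adjunction at $M$, tensored with $E$) and the other is $\mathrm{id}_M \otimes \mu_E$ (induced by the multiplication of $E$). Both are maps of right $E$-modules (the right $E$-action in each case coming from the rightmost $E$), and both source and target are extended modules (via $X = M$ and $X = M\otimes E$ respectively, forgetting the $E$-module structure).

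The candidate coequalizer is $\mu_M \colon M\otimes E \to M$, which is an $E$-module map and which coequalizes the pair by associativity of the action. The key step is verifying that this really is a coequalizer \emph{in} $\Mod E$. For this, I would exhibit the diagram as a split (hence absolute) coequalizer in $\cat{M}$ using the unit $\eta\colon S \to E$: the map
\[
s_0 = \mathrm{id}_M \otimes \eta \colon M \longrightarrow M\otimes E
\]
satisfies $\mu_M \circ s_0 = \mathrm{id}_M$ (unitality of the action), and the map
\[
s_1 = \mathrm{id}_M \otimes \eta \otimes \mathrm{id}_E \colon M\otimes E \longrightarrow M\otimes E\otimes E
\]
together with $s_0$ fits into the standard split-fork identities
\[
(\mathrm{id}_M\otimes \mu_E)\circ s_1 = \mathrm{id}_{M\otimes E}, \qquad s_0 \circ \mu_M = (\mu_M \otimes \mathrm{id}_E)\circ s_1,
\]
using unitality of $\eta$ in $E$. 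This makes $(M, \mu_M)$ an absolute coequalizer in $\cat{M}$.

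The remaining point, which I expect to be the only subtle check, is upgrading the coequalizer from $\cat{M}$ to $\Mod E$. Since $U\colon \Mod E \to \cat{M}$ is faithful and reflects isomorphisms, uniqueness of factorizations transfers directly; so given any $E$-module map $g\colon M\otimes E \to N$ coequalizing the pair, the unique $\cat{M}$-map $\bar g \colon M \to N$ with $\bar g \circ \mu_M = g$ is automatically an $E$-module map, because the equation $\bar g \circ \mu_M = g$ together with the fact that $g$ and $\mu_M$ respect the $E$-action forces $\bar g$ to respect the action (verifiable by precomposing with $\mu_M \otimes \mathrm{id}_E$, which is an epimorphism in $\cat{M}$ since it is split). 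This establishes the coequalizer diagram in $\Mod E$, completing the proof.
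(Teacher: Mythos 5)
Your overall route is the same as the paper's: present $M$ by the canonical pair $M\otimes E\otimes E\rightrightarrows M\otimes E$ (the paper's second map, the composite through $i\otimes 1$, simplifies to exactly your $\mu_M\otimes\mathrm{id}_E$), observe the fork is split after forgetting to $\cat{M}$, and then transfer the coequalizer to $\Mod E$; your last paragraph supplies precisely the ``little argument'' the paper leaves to the reader, and it is correct (precomposing with the split epimorphism $\mu_M\otimes\mathrm{id}_E$ does force $\bar g$ to be a module map).

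However, the splitting you wrote down is not a split fork. With $s_1=\mathrm{id}_M\otimes\eta\otimes\mathrm{id}_E$ one computes
\[
(\mu_M\otimes\mathrm{id}_E)\circ s_1=\bigl(\mu_M\circ(\mathrm{id}_M\otimes\eta)\bigr)\otimes\mathrm{id}_E=\mathrm{id}_{M\otimes E},
\]
by unitality of the action, which is not $s_0\circ\mu_M$ (that map sends ``$m\otimes e$'' to ``$me\otimes 1$''); so the second identity you assert is false in general, and having both composites equal to the identity does not yield the split-fork conclusion. The fix is to insert the unit in the \emph{last} tensor factor: take $s_1=\mathrm{id}_{M\otimes E}\otimes\eta\colon M\otimes E\to M\otimes E\otimes E$. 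Then $(\mathrm{id}_M\otimes\mu_E)\circ s_1=\mathrm{id}_{M\otimes E}$ by the right unit law of $E$, and $(\mu_M\otimes\mathrm{id}_E)\circ s_1=s_0\circ\mu_M$, which are exactly the identities needed (together with $\mu_M\circ s_0=\mathrm{id}_M$ and associativity giving that $\mu_M$ coequalizes the pair). With that one-line correction your argument is complete and agrees with the paper's proof.
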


\begin{proof}
Given an $E$-module $X$, the action map $p\mathcolon X\otimes
E\xrightarrow{}X$ is a map of $E$-modules when we give $X\otimes E$
the free $E$-module structure.  This map is an $\cat{M}$-split
epimorphism by the map $i\mathcolon X\xrightarrow{}X\otimes E$ induced
by the unit of $E$.  Hence $X$ is the coequalizer in $\cat{M}$ of
$ip\mathcolon X\otimes E\xrightarrow{}X\otimes E$ and the identity of
$X\otimes E$.  It follows that $X$ is the coequalizer in $\Mod E$ of
\[
1\otimes \mu \mathcolon X\otimes E\otimes E\xrightarrow{}X\otimes E,
\]
where $\mu$ denotes the multiplication in $E$, and the composite
\[
X\otimes E\otimes E\xrightarrow{p\otimes 1} X\otimes E
\xrightarrow{i\otimes 1} X\otimes E\otimes E \xrightarrow{1\otimes
\mu} X\otimes E .
\]
This requires a little argument, using the fact that colimits in $\Mod
E$ can be calculated in $\cat{M}$.  
\end{proof}

We then get a general version of the Eilenberg-Watts theorem, whose
proof is straightforward enough, given Lemma~\ref{lem-E-module}, to
leave to the reader.

\begin{theorem}\label{thm-iso}
Let $\cat{M}$ be a closed symmetric monoidal category with all
finite colimits and limits, $E, E'$ be monoids in $\cat{M}$, and
$F\mathcolon \Mod E\xrightarrow{}\Mod E'$ be an $\cat{M}$-functor that
preserves coequalizers.  Suppose the natural map
\[
\tau_{X}\mathcolon X\otimes_{E} FE \xrightarrow{} FX
\]
is an isomorphism for all extended $E$-modules $X$.  Then it is a
natural isomorphism for all $E$-modules $X$.  Similarly, if
$F\mathcolon \Mod E\xrightarrow{}\Mod E'$ is a contravariant
$\cat{M}$-functor that takes coequalizers to equalizers, and the
natural map
\[
\tau_{X}\mathcolon FX\xrightarrow{} \Hom_{E} (X, FE)
\]
is an isomorphism for all extended $E$-modules $X$, then it is an
isomorphism for all $E$-modules $X$.  
\end{theorem}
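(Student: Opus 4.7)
The plan is to apply Lemma~\ref{lem-E-module} and exploit that both sides of $\tau$ respect coequalizers. Given an $E$-module $X$, the lemma furnishes a coequalizer presentation $P_{1}\rightrightarrows P_{0}\to X$ in $\Mod E$ with $P_{0}$ and $P_{1}$ extended. Naturality of $\tau$ then yields a commutative diagram in $\Mod E'$
\[
\begin{CD}
P_{1}\otimes_{E} FE @>>> P_{0}\otimes_{E} FE @>>> X\otimes_{E} FE \\
@VV\tau_{P_{1}}V @VV\tau_{P_{0}}V @VV\tau_{X}V \\
FP_{1} @>>> FP_{0} @>>> FX
\end{CD}
\]
in which $\tau_{P_{0}}$ and $\tau_{P_{1}}$ are isomorphisms by hypothesis.

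For the covariant case, the top row is a coequalizer because $-\otimes_{E}FE\mathcolon \Mod E\to \Mod E'$ is a left adjoint: by Proposition~\ref{prop-general-map}, $FE$ is an $E\text{-}E'$-bimodule, and the right adjoint sends $Y\in \Mod E'$ to $\Hom_{E'}(FE,Y)$ with $E$-action induced by the left $E$-action on $FE$. The bottom row is a coequalizer by hypothesis on $F$. Since $\tau_{P_{0}}$ and $\tau_{P_{1}}$ intertwine the two coequalizing pairs, the universal property of coequalizers forces $\tau_{X}$ to be an isomorphism. For the contravariant case the roles of limits and colimits swap: $\Hom_{E}(-,FE)$ sends coequalizers in its first variable to equalizers (as a representable-style functor into $\Mod E'$), $F$ sends coequalizers to equalizers by hypothesis, so the analogous diagram has equalizer rows, and the dual uniqueness argument forces $\tau_{X}\mathcolon FX\to \Hom_{E}(X,FE)$ to be an isomorphism.

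The one subtlety, glossed by the author's remark that the proof is ``straightforward enough \ldots to leave to the reader,'' is checking that the map $\tau_{X}$ obtained from the universal property of the (co)equalizer actually coincides with the $\tau_{X}$ constructed in Proposition~\ref{prop-general-map}. This reduces to tracing the adjunctions defining $\tau$ through the explicit coequalizer presentation produced in the proof of Lemma~\ref{lem-E-module}, which is routine diagram-chasing but the only place real care is needed.
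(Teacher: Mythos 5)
Your proposal is correct and is exactly the argument the paper intends: the author explicitly leaves the proof to the reader as a consequence of Lemma~\ref{lem-E-module}, and your use of the coequalizer presentation by extended modules, the colimit-preservation of both sides of $\tau$, and naturality is precisely that argument. The ``subtlety'' you flag at the end is in fact dispatched by naturality alone (the right-hand square commutes, so $\tau_{X}$ \emph{is} the map induced on coequalizers by the universal property), so no further diagram-chasing is needed.
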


Note that Theorem~\ref{thm-iso} does not imply the usual
Eilenberg-Watts theorem directly, though the method of proof does do
so.  Indeed, when $\cat{M}$ is abelian groups, every $E$-module is a
coequalizer of a map of free $E$-modules (not just extended ones), so
it suffices to know that $F$ is right exact (which is equivalent to
preserving coequalizers in this case) and $\tau_{X}$ is an isomorphism
on free $E$-modules.  For this, we need $F$ to preserve direct sums,
and then we recover the usual Eilenberg-Watts theorem.  

There is a special case in which $\tau_{X}$ is automatically an
isomorphism on extended $E$-modules.  An $\cat{M}$-functor
$F\mathcolon \Mod E\xrightarrow{}\Mod E'$ is called a \textbf{strict
$\cat{M}$-functor} if the structure map
\[
K\otimes FX \xrightarrow{}F (K\otimes X)
\]
is an isomorphism for all $K\in \cat{M}$ and $X\in \Mod E$.  In the
contravariant case, $F$ is a strict $\cat{M}$-functor if the structure
map 
\[
F (K\otimes X) \xrightarrow{}\Hom (K,FX)
\]
is an isomorphism for all $K\in \cat{M}$ and $X\in \Mod E$.  

\begin{theorem}\label{thm-strict}
Suppose $\cat{M}$ is a closed symmetric monoidal category with all
finite colimits and limits, $E, E'$ are monoids in $\cat{M}$, and
$F\mathcolon \Mod E\xrightarrow{}\Mod E'$ is a strict
$\cat{M}$-functor that preserves coequalizers.  Then the natural
map
\[
\tau_{X}\mathcolon X\otimes_{E} FE \xrightarrow{} FX
\]
is an isomorphism for all $X$.  Similarly, if $F\mathcolon \Mod
E\xrightarrow{}\Mod E'$ is a contravariant strict $\cat{M}$-functor
that takes coequalizers to equalizers, then the natural map
\[
FX\xrightarrow{} \Hom_{E} (X, FE)
\]
is an isomorphism for all $X$.  
\end{theorem}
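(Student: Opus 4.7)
The plan is to reduce the statement to Theorem~\ref{thm-iso}, which already passes from extended $E$-modules to arbitrary ones. So I only need to check that, when $F$ is strict, the natural transformation $\tau_{X}$ of Proposition~\ref{prop-general-map} is an isomorphism on every extended module $X=K\otimes E$.

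For the covariant case, the coequalizer computing $(K\otimes E)\otimes_{E}FE$ is split by the unit of $E$ (the $E$-action on $K\otimes E$ is free), producing a canonical isomorphism $(K\otimes E)\otimes_{E}FE\cong K\otimes FE$. On the other side, strictness supplies an isomorphism $K\otimes FE\cong F (K\otimes E)$. The heart of the argument is to show that $\tau_{K\otimes E}$ coincides with the composite of these two isomorphisms. To do this I would unpack the construction of $\tau$: by definition $\tau_{K\otimes E}$ is built by running the identity through the enrichment
\[
\Hom_{E} (E, K\otimes E)\xrightarrow{} \Hom_{E'} (FE, F (K\otimes E)),
\]
taking the adjoint $(K\otimes E)\otimes FE\to F (K\otimes E)$, and descending through the relevant coequalizer. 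The coherence axioms relating an $\cat{M}$-functor's enrichment to its tensor structure map force this composite to factor through, and agree with, the strictness structure map $K\otimes FE\to F (K\otimes E)$. Theorem~\ref{thm-iso} then promotes the isomorphism to all of $\Mod E$.

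The contravariant case is formally dual. On $X=K\otimes E$ the adjunction yields $\Hom_{E} (K\otimes E, FE)\cong \Hom (K, \Hom_{E} (E, FE))\cong \Hom (K, FE)$, while strictness gives $F (K\otimes E)\cong \Hom (K, FE)$; an analogous coherence-driven unpacking of the construction of the contravariant $\tau$ in Proposition~\ref{prop-general-map} identifies it with the evident isomorphism between these two presentations of $\Hom (K, FE)$, after which the contravariant half of Theorem~\ref{thm-iso} finishes the job.

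The main obstacle is entirely this coherence check: making sure that $\tau_{K\otimes E}$ really is the composite of the two structural isomorphisms, and not some twist of it. There is no conceptual difficulty, only the routine diagram-chase through the axioms that tie an $\cat{M}$-functor's enriched structure to its tensorial structure map.
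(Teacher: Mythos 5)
Your proposal is correct and follows essentially the same route as the paper: reduce to Theorem~\ref{thm-iso} and verify on extended modules via the splitting $(K\otimes E)\otimes_{E}FE\cong K\otimes FE$ together with the strictness isomorphism $K\otimes FE\cong F(K\otimes E)$. The paper's proof is in fact terser than yours, simply asserting the chain of isomorphisms without spelling out the coherence check that $\tau_{K\otimes E}$ agrees with their composite, which you rightly flag as the only point requiring care.
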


\begin{proof}
It suffices to check that $\tau_{X}$ is an isomorphism on all extended
$E$-modules.  But if $X\in \cat{M}$, 
\[
F (X\otimes E)\cong X\otimes FE \cong (X\otimes E)\otimes_{E} FE
\]
since $F$ is a strict $\cat{M}$-functor.  The contravariant case is
similar.  
\end{proof}

\section{Monoidal model categories}\label{sec-model}

Since we are most interested in versions of the Eilenberg-Watts
theorem ``up to homotopy'', we now need to introduce the relevant
homotopical algebra.  For this, we will of course need to assume
knowledge of model categories, for which see~\cite[Part~2]{hirschhorn}
or~\cite{hovey-model}.

Our base category $\cat{M}$ will be both closed symmetric monoidal and
have a model structure.  Obviously, we will need to assume some
compatibility between the model structure and the monoidal structure
on $\cat{M}$.  This is well understood in the theory of model
categories, and we adopt the following definition,

\begin{definition}\label{defn-monoidal-model}
\begin{enumerate}
\item Suppose $\cat{M}$ is a monoidal category, and $f\mathcolon
A\xrightarrow{}B$ and $g\mathcolon C\xrightarrow{}D$ are maps in
$\cat{M}$.  The \textbf{pushout product} of $f$ and $g$, written
$f\Box g$, is the map 
\[
(A\otimes D) \amalg_{A\otimes C} (B\otimes C) \xrightarrow{} B\otimes D
\]
from the pushout of $A\otimes D$ and $B\otimes C$ over $A\otimes C$,
to $B\otimes D$.  
\item Now suppose $\cat{M}$ is a symmetric monoidal category equipped
with a model structure.  We say that $\cat{M}$ is a \textbf{symmetric
monoidal model category} if the following conditions hold\uc 
\begin{enumerate}
\item If $f$ and $g$ are cofibrations, then $f\Box g$ is a
cofibration. 
\item If $f$ is a cofibration and $g$ is a trivial cofibration, then
$f\Box g$ is a trivial cofibration.  
\end{enumerate}
\end{enumerate}
\end{definition}

This definition is different
from~\cite[Definition~4.2.6]{hovey-model}, where a unit condition was
added to ensure that the homotopy category of a symmetric monoidal
model category has a unit.  It automatically holds when the unit is
cofibrant.  This unit condition is not needed for some
of our versions of the Eilenberg-Watts theorem, and when it is needed,
it is much easier to assume the unit is cofibrant, so we omit it.  

But we also need the category $\Mod E$ of modules over a
monoid $E$ in $\cat{M}$ to be a model category, in a way that is
compatible with $\cat{M}$.  We therefore make the following
definition.  
 
\begin{definition}\label{defn-strongly-cofibrantly-generated}
Suppose $\cat{M}$ is a closed symmetric monoidal model category.  We
say that $\cat{M}$ is \textbf{strongly cofibrantly generated} if there
are sets $I$ of cofibrations in $\cat{M}$ and $J$ of trivial
cofibrations in $\cat{M}$ such that, for every monoid $E$ in
$\cat{M}$, the sets $I\otimes E$ and $J\otimes E$ cofibrantly generate
a model structure on $\Mod E$ where the weak equivalences are maps of
$E$-modules that are weak equivalences in $\cat{M}$.  
\end{definition}

This definition implies that the maps of $I\otimes E$ are small with
respect to $(I\otimes E)$-cell, and similarly for $J\otimes E$, as
this is part of the definition of cofibrantly generated.  Note also
that a map $p$ is a fibration (resp. trivial fibration) in $\Mod E$ if
and only if it has the right lifting property with respect to the maps
of $J\otimes E$ (resp. $I\otimes E$), which is equivalent to $p$ being
a fibration (resp.  trivial fibration) in $\cat{M}$.  This definition
also implies that the functor that takes $X\in \cat{M}$ to $X\otimes
E\in \Mod E$ is a left Quillen functor.

We note that pretty much every closed symmetric monoidal model
category that is commonly studied is strongly cofibrantly generated.
The most useful theorem along these lines is the following, which is a
paraphrase of~\cite[Theorem~4.1]{schwede-shipley-monoids}.

\begin{theorem}[Schwede-Shipley]\label{thm-strongly}
Suppose $\cat{M}$ is a cofibrantly generated, closed symmetric
monoidal model category.  If $\cat{M}$ satisfies the monoid axiom
of~\cite[Definition~3.3]{schwede-shipley-monoids} and every object of
$\cat{M}$ is small, then $\cat{M}$ is strongly cofibrantly generated.  
\end{theorem}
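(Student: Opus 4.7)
The plan is to apply Kan's transfer theorem for cofibrantly generated model structures along the free--forgetful adjunction
\[
(-)\otimes E \colon \cat{M} \rightleftarrows \Mod E \colon U,
\]
declaring a morphism of $\Mod E$ to be a weak equivalence (resp.\ fibration) exactly when $U$ sends it to a weak equivalence (resp.\ fibration) in $\cat{M}$, and taking $I\otimes E$ and $J\otimes E$ as the proposed generating (trivial) cofibrations. Kan's theorem reduces the proof to two verifications: (a) the domains of $I\otimes E$ and $J\otimes E$ are small relative to the relevant cell complexes, so that the small object argument applies; and (b) every relative $(J\otimes E)$-cell complex is a weak equivalence in $\cat{M}$.

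For (a), I would use that $U$ creates colimits, so colimits in $\Mod E$ are computed in $\cat{M}$. Since by hypothesis every object of $\cat{M}$ is small, the domains of $I$ and $J$ are small relative to any cellular construction, and this transfers across the free functor to give the smallness needed for $(I\otimes E)$-cell and $(J\otimes E)$-cell.

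For (b), the monoid axiom is precisely tailored to do the work. A relative $(J\otimes E)$-cell complex in $\Mod E$, forgotten to $\cat{M}$, is a transfinite composition of pushouts of maps of the form $j\otimes E$ with $j\in J$. The monoid axiom from \cite[Definition~3.3]{schwede-shipley-monoids} asserts exactly that the saturation under transfinite composition and pushout in $\cat{M}$ of the class $\{\,j\otimes X \mid j\in J,\ X\in\cat{M}\,\}$ consists of weak equivalences. Taking $X=E$ yields (b).

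Once (a) and (b) are in hand, Kan's transfer theorem produces the cofibrantly generated model structure on $\Mod E$, with weak equivalences and fibrations defined via $U$ as required by Definition~\ref{defn-strongly-cofibrantly-generated}. The main obstacle is conceptual rather than computational: the monoid axiom is the entire non-formal content of the theorem; it is exactly what is needed to make Kan's transfer succeed, since without it there is no reason for attachments along trivial cofibrations of the form $j\otimes E$ to remain weak equivalences. Everything else is bookkeeping inside the transfer machinery, which is why \cite{schwede-shipley-monoids} can dispose of the argument quickly.
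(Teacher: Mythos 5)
Your proof is correct and is essentially the argument the paper relies on: Hovey gives no proof of this statement, citing Schwede--Shipley's Theorem~4.1, whose proof is exactly this transfer of the cofibrantly generated structure along the free--forgetful adjunction $(-)\otimes E \dashv U$, with the smallness hypothesis supplying the small object argument and the monoid axiom (applied with $X=E$, using that $U$ creates the relevant colimits) showing that relative $(J\otimes E)$-cell complexes are weak equivalences. Nothing further is needed.
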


In fact, one does not need every object of $\cat{M}$ to be small.  Not
every topological space is small, but the category of (compactly
generated weak Hausdorff) topological spaces is still strongly
cofibrantly generated.  

One advantage of the strongly cofibrantly generated hypothesis is the
following proposition.  

\begin{proposition}\label{prop-strongly}
Suppose $\cat{M}$ is a strongly cofibrantly generated, closed
symmetric monoidal model category, and $E$ is a monoid in
$\cat{M}$. If $f$ is a cofibration in $\cat{M}$ and $g$ is a
cofibration in $\Mod E$, then the pushout product $f\Box g$ is a
cofibration in $\Mod E$, which is a trivial cofibration if either $f$
or $g$ is so.  In particular, if $A$ is cofibrant in $\Mod E$, and $f$
is a cofibration in $\cat{M}$, then $f\otimes A$ is a cofibration in
$\Mod E$, and is a trivial cofibration if $f$ is so.  
\end{proposition}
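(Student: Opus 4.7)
The plan is to reduce the claim to the generating (trivial) cofibrations $I\otimes E$ and $J\otimes E$ of $\Mod E$, where it follows from the monoidal model axioms on $\cat{M}$ pushed forward through the left Quillen functor $-\otimes E\mathcolon \cat{M}\xrightarrow{}\Mod E$ (which is left Quillen by strong cofibrant generation, since $I\otimes E$ generates the cofibrations of $\Mod E$ and $J\otimes E$ the trivial cofibrations).

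The key identity is
\[
f\Box (h\otimes E) \;\cong\; (f\Box h)\otimes E
\]
for maps $f,h$ in $\cat{M}$, where on the left the pushout product uses the $\cat{M}$-action on $\Mod E$. This is immediate from associativity of $\otimes$ and the fact that $-\otimes E$, being a left adjoint, preserves the pushout defining $f\Box h$. Combined with the monoidal model axioms on $\cat{M}$, it yields: if $f$ is a cofibration in $\cat{M}$ and $h\in I$, then $f\Box h$ is a cofibration in $\cat{M}$, so $(f\Box h)\otimes E$ is a cofibration in $\Mod E$; and replacing $I$ by $J$ (or letting $f$ be trivial) produces trivial cofibrations in the same way.

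Next I would run the standard saturation argument. Fix a cofibration $f$ in $\cat{M}$ and let $\cat{S}$ be the class of maps $g$ in $\Mod E$ for which $f\Box g$ is a cofibration in $\Mod E$. Because $-\otimes -$ preserves colimits separately in each variable, the functor $f\Box (-)$ on the arrow category preserves pushouts, transfinite compositions, and retracts of arrows; since cofibrations in $\Mod E$ share these closure properties, $\cat{S}$ is saturated. By the previous step $\cat{S}\supseteq I\otimes E$, so $\cat{S}$ contains every cofibration of $\Mod E$. The analogous argument with $J$ in place of $I$ handles the case where $g$ is a trivial cofibration; the case where $f$ is a trivial cofibration is handled by the same class-chasing, using that $f\Box (h\otimes E)\cong (f\Box h)\otimes E$ is a trivial cofibration in $\Mod E$ whenever $f$ is trivial in $\cat{M}$ and $h\in I$.

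For the final assertion, take $g\mathcolon 0_{E}\xrightarrow{}A$, which is a cofibration in $\Mod E$ since $A$ is cofibrant. Because $-\otimes -$ preserves colimits in each slot, $X\otimes 0_{E}\cong 0_{E}$ for every $X\in \cat{M}$, so the pushout square defining $f\Box g$ collapses to give $f\Box g\cong f\otimes A$, and the preceding results apply. The main obstacle is bookkeeping around the identity $f\Box (h\otimes E)\cong (f\Box h)\otimes E$, which mixes colimits taken in $\cat{M}$ with those in $\Mod E$; once $-\otimes E$ is recognized as left Quillen, everything else is routine saturation.
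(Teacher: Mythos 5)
Your proof is correct and follows essentially the same route as the paper: the key identity $f\Box (h\otimes E)\cong (f\Box h)\otimes E$ together with the pushout product axiom in $\cat{M}$ and the fact that $-\otimes E$ is left Quillen, plus a reduction to the generating (trivial) cofibrations of $\Mod E$. The only difference is that the paper handles that reduction by citing \cite[Corollary~4.2.5]{hovey-model}, whereas you re-derive it by the standard saturation argument in the $g$-variable.
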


This proposition means that $\Mod E$ is an $\cat{M}$-model category,
in the language of~\cite[Definition~4.2.18]{hovey-model}, except that
we again have omitted a unit condition.  

\begin{proof}
The last sentence follows from the rest of the proposition by taking
$g$ to be the map $0\xrightarrow{}A$.  It suffices to check the
statement about $f\Box g$ when $f$ and $g$ are generating cofibrations
or trivial cofibrations~\cite[Corollary~4.2.5]{hovey-model}.  In this
case, $g$ will be of the form $h\otimes E$ for a map $h$ in either $I$
or $J$, and $f$ will be in either $I$ or $J$.  But then
\[
f\Box (h\otimes E) \cong (f\Box g) \otimes E
\]
so the result follows from the fact that tensoring with $E$ is a left
Quillen functor.
\end{proof}

The reader might now reasonably expect us to assert that the tensor product 
\[
X\otimes_{E} Y \mathcolon \Mod E \times (E\text{-}E'\text{-}\text{Bimod})
\xrightarrow{} \Mod E'
\]
is also a Quillen bifunctor, so that the pushout product of
cofibrations is a cofibration and so on.  However, this seems to
require $E$ to be cofibrant in $\cat{M}$, and in any case is not the
most natural thing for us to consider since we know nothing about the
structure of $FE$ as a bimodule.  

Instead, we have the following proposition.  

\begin{proposition}\label{prop-enriched-general}
Suppose $\cat{M}$ is a strongly cofibrantly generated, closed symmetric
monoidal model category, $E$ and $E'$ are monoids in $\cat{M}$, and
$A$ is an $E\text{-}E'$-bimodule that is cofibrant as a right $E'$-module.
Then the functor
\[
X\mapsto X\otimes_{E} A \mathcolon \Mod E\xrightarrow{}\Mod E'
\]
is a left Quillen functor, with right adjoint $Y\mapsto \Hom_{E'}
(A,Y)$. Similarly, if $A$ is an $E\otimes E'$-module that is fibrant as an
object of $\cat{M}$, then the functor
\[
X\mapsto \Hom_{E} (X,A) \mathcolon \Mod E\xrightarrow{}\Mod E'
\]
is a contravariant left Quillen functor, with right adjoint $Y\mapsto
\Hom_{E'} (Y,A)$.
\end{proposition}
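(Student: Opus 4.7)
The strategy in both cases is to verify the standard cofibrantly generated criterion: a left adjoint between model categories is left Quillen if and only if it sends generating cofibrations to cofibrations and generating trivial cofibrations to trivial cofibrations. Under the strongly cofibrantly generated hypothesis, the generating (trivial) cofibrations of $\Mod E$ have the form $f\otimes E$ with $f$ in $I$ (resp.\ $J$), and (trivial) fibrations in $\Mod E'$ are simply those maps that are (trivial) fibrations in $\cat{M}$. The two adjunctions themselves, namely
\[
\Mod E' (X\otimes_{E}A,Y) \cong \Mod E (X,\Hom_{E'} (A,Y))
\]
in the covariant case and
\[
\Mod E (X,\Hom_{E'} (Y,A)) \cong \Mod E' (Y,\Hom_{E} (X,A))
\]
in the contravariant case, I would deduce from the enriched closed-category formalism of Section~\ref{sec-functors} combined with the coequalizer/equalizer presentations of $\otimes_{E}$ and $\Hom_{E}$.

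For the covariant case, I would next invoke the natural isomorphism $(X\otimes E)\otimes_{E}A\cong X\otimes A$ of right $E'$-modules, which expresses that $X\otimes E$ is the free $E$-module on $X$. This identifies the image of the generator $f\otimes E$ with $f\otimes A$. Since $A$ is cofibrant in $\Mod E'$ by hypothesis, Proposition~\ref{prop-strongly} applied to the monoid $E'$ asserts that $f\otimes A$ is a cofibration in $\Mod E'$, trivial whenever $f$ is, which is exactly what we need. For the contravariant case, being left Quillen means taking (trivial) cofibrations in $\Mod E$ to (trivial) fibrations in $\Mod E'$; because the functor converts colimits to limits and (trivial) fibrations are closed under arbitrary limits, it again suffices to check this on generators. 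The free--forgetful isomorphism $\Hom_{E} (X\otimes E,A)\cong \Hom (X,A)$ reduces the problem to showing that $\Hom (f,A)$ is a (trivial) fibration in $\cat{M}$ whenever $f$ is a (trivial) cofibration in $\cat{M}$. This is the standard enriched form of the pushout-product axiom applied to the fibration $A\to \ast$ granted by the fibrancy of $A$.

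The main work is careful bookkeeping rather than a real obstacle: one must verify that the natural isomorphisms $(X\otimes E)\otimes_{E}A\cong X\otimes A$ and $\Hom_{E} (X\otimes E,A)\cong \Hom (X,A)$ are in fact maps of right $E'$-modules, and that the two adjunctions above exist at the level of $E$- and $E'$-module maps rather than only of underlying $\cat{M}$-maps. Once these natural isomorphisms are in hand, Proposition~\ref{prop-strongly} and the pushout-product axiom for $\cat{M}$ supply all the model-categorical content.
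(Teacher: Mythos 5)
Your proposal is correct and takes essentially the same route as the paper: reduce to the generating (trivial) cofibrations $f\otimes E$ of $\Mod E$, identify $(f\otimes E)\otimes_{E}A\cong f\otimes A$ and invoke Proposition~\ref{prop-strongly} in the covariant case, and identify $\Hom_{E}(f\otimes E,A)\cong\Hom(f,A)$ and use fibrancy of $A$ (the adjoint form of the pushout-product axiom) in the contravariant case, with the adjunctions themselves left as bookkeeping just as the paper does. The one imprecision is your claim that (trivial) fibrations are closed under \emph{arbitrary} limits, which is false in general; what the contravariant reduction actually needs, and what the paper says, is closure of fibrations under retracts, pullbacks, and inverse transfinite compositions, which are precisely the limits the contravariant left adjoint produces from the retract/transfinite-composition/pushout presentation of a cofibration.
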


\begin{proof}
We begin with the covariant case, and leave to the reader the check
that $\Hom_{E'} (A,-)$ is indeed right adjoint to our functor.  Note
that we use the left $E$-module structure on $A$ to make $\Hom_{E'}
(A,-)$ a right $E$-module.

We need to show that if $f$ is a cofibration or trivial cofibration in
$\Mod E$, then $f\otimes_{E}A$ is a cofibration or trivial cofibration
in $\Mod E'$.  The proof is similar in both cases, so we just work
with cofibrations.  Let $I$ be a set of generating cofibrations in
$\cat{M}$ so that $I\otimes E$ is a set of generating cofibrations in
$\Mod E$.  Then any cofibration in $\Mod E$ is a retract of a
transfinite composition of pushouts of maps of $I\otimes E$.  Since
the tensor product is a left adjoint, it preserves retracts (of
course), transfinite compositions, and pushouts.  So it suffices to
show that if $f\in I$, then
\[
(f\otimes E) \otimes_{E} A
\]
is a cofibration in $\Mod E'$.  But of course 
\[
(f\otimes E) \otimes_{E} A \cong f\otimes A,
\]
so this is ensured by the fact that $A$ is cofibrant as a right
$E'$-module and Proposition~\ref{prop-strongly}.  

The contravariant case is mostly similar.  Again, we leave to the
reader the proof of the adjointness relation, where one must take into
account the fact that the functors are contravariant.  We must then
show that if $f$ is a cofibration (resp. trivial cofibration) in $\Mod
E$, then $\Hom_{E} (f,A)$ is a fibration (resp. trivial fibration) in
$\Mod E'$, or equivalently, in $\cat{M}$.  As before, the two cases
are similar, so we only do the cofibration case.  Every cofibration in
$\Mod E$ is a retract of a transfinite composition of pushouts of maps
of $I\otimes E$, where $I$ is the set of generating cofibrations of
$\cat{M}$.  Since $\Hom_{E} (-,A)$ is a contravariant left adjoint, it
preserves retracts, converts transfinite compositions to inverse
transfinite compositions, and converts pushouts to pullbacks.  Since
retracts, inverse transfinite compositions, and pullbacks of
fibrations are fibrations, it suffices to check that
\[
\Hom_{E} (g\otimes E,A) =\Hom (g,A)
\]
 is a fibration for all $g\in I$.  But this is true because $A$ is
fibrant in $\cat{M}$.
\end{proof}

It now follows easily that the natural transformation of
Proposition~\ref{prop-general-map} descends to the homotopy category,
at least if we replace $E$ by a cofibrant approximation $QE$ before
applying $F$.  More precisely, recall that in a model category
$\cat{M}$, we can choose a functor $Q\mathcolon
\cat{M}\xrightarrow{}\cat{M}$ such that $QX$ is cofibrant for all $X$,
and a natural trivial fibration $QX\xrightarrow{}X$.

\begin{corollary}\label{cor-enriched-general}
Suppose $\cat{M}$ is a strongly cofibrantly generated, closed
symmetric monoidal model category, $E$ and $E'$ are monoids in
$\cat{M}$, and $F\mathcolon \Mod E\xrightarrow{}\Mod E'$ is a left
Quillen $\cat{M}$-functor.  Then the natural transformation $\tau$ of
Proposition~\ref{prop-general-map} has a derived natural
transformation
\[
L\tau \mathcolon QX\otimes_{E} FQE \xrightarrow{} FQX=(LF) (X) 
\]
of functors on $\ho \Mod E$.  Similarly, if $F\mathcolon \Mod
E\xrightarrow{}\Mod E'$ is a contravariant left Quillen
$\cat{M}$-functor, then the natural transformation $\tau$ of
Proposition~\ref{prop-general-map} has a derived natural
transformation
\[
L\tau \mathcolon (LF) (X)= FQX \xrightarrow{} \Hom_{E} (QX, FQE)
\]
of functors on $\ho \Mod E$.  
\end{corollary}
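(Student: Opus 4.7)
The plan is to combine the natural transformation $\tau$ from Proposition~\ref{prop-general-map} with cofibrant replacement, then invoke the standard principle that a natural transformation between left Quillen functors induces one between their left derived functors on the homotopy category.

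For the covariant case, I would first exhibit both $T\mathcolon X\mapsto X\otimes_{E}FQE$ and $F$ as left Quillen functors $\Mod E\to \Mod E'$. The latter is given. For the former, I would take $QE$ to be a cofibrant replacement of $E$ in the category of $E$-$E$-bimodules (which exists by strong cofibrant generation applied to the monoid $E\otimes E^{\textup{op}}$), so that $FQE$ inherits an $E$-$E'$-bimodule structure from the argument of Proposition~\ref{prop-general-map} applied with $QE$ in place of $E$ and the $\cat{M}$-enrichment of $F$. Since $QE$ is cofibrant as a right $E$-module and $F$ is left Quillen, $FQE$ is cofibrant in $\Mod E'$, and Proposition~\ref{prop-enriched-general} then guarantees $T$ is left Quillen. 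Next, I would define $L\tau(X)$ at the level of cofibrant objects as the composite
\[
QX\otimes_{E} FQE \xrightarrow{\mathrm{id}\otimes_{E}Fp} QX\otimes_{E}FE \xrightarrow{\tau_{QX}} FQX,
\]
where $p\mathcolon QE\to E$ is the cofibrant-replacement trivial fibration (which is a map of $E$-$E$-bimodules, hence $Fp$ is a map of left $E$-modules and so descends through the coequalizer defining $\otimes_E$), and $\tau_{QX}$ is the component of the natural transformation from Proposition~\ref{prop-general-map}. Naturality in $X$ follows from the naturality of $\tau$ and of $Q$, and passage to the homotopy category is routine once both endpoints are known to be left Quillen.

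The contravariant case is parallel: since $F$ is a contravariant left Quillen functor and $QE$ is cofibrant, $FQE$ is fibrant in $\cat{M}$, so Proposition~\ref{prop-enriched-general} makes $X\mapsto \Hom_{E}(X,FQE)$ a contravariant left Quillen functor, and $L\tau$ arises as the evident composite $FQX\to \Hom_E(QX,FE)\to \Hom_E(QX,FQE)$ in an analogous fashion. The main obstacle is the bimodule structure on $FQE$: Proposition~\ref{prop-general-map} produces the strict bimodule structure on $FE$ via the monoid isomorphism $E\cong \Hom_{E}(E,E)$, and no such isomorphism holds for $QE$. Taking $QE$ in the category of $E$-$E$-bimodules circumvents this, since then $QE$ carries strict actions on both sides compatible with $p\mathcolon QE\to E$; alternatively, when the unit $S$ of $\cat{M}$ is cofibrant, $E$ itself is cofibrant in $\Mod E$ and one may take $QE=E$, in which case $L\tau$ reduces transparently to $\tau_{QX}\mathcolon QX\otimes_{E}FE\to FQX$.
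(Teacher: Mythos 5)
Your overall strategy coincides with the paper's: the corollary is deduced from Proposition~\ref{prop-enriched-general} (which makes $X\mapsto X\otimes_{E}FQE$, resp.\ $X\mapsto\Hom_{E}(X,FQE)$, a left Quillen functor once $FQE$ is a suitably cofibrant/fibrant bimodule) together with the hypothesis that $F$ itself is left Quillen, the derived transformation being the composite of $\tau$ from Proposition~\ref{prop-general-map} with the map induced by $p\mathcolon QE\to E$. You are also right to press on the point the paper leaves implicit, namely where the left $E$-action on $FQE$ comes from: a cofibrant replacement of $E$ taken merely in $\Mod E$ carries no left $E$-action, so some device is needed, and your closing observation that when the unit $S$ is cofibrant one may take $QE=E$ is exactly what the paper does in its stable applications.

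However, the device you propose in general --- taking $QE\to E$ to be a cofibrant replacement in $E\text{-}E$-bimodules --- does not deliver the hypothesis you then invoke. You assert that this $QE$ is ``cofibrant as a right $E$-module,'' but cofibrancy as an $E\otimes E^{\textup{op}}$-module does not imply cofibrancy in $\Mod E$: the cells of a cofibrant bimodule have the form $E\otimes A\otimes E$ with $A$ a domain or codomain of a generating cofibration of $\cat{M}$, and as right $E$-modules these are extended from $E\otimes A\in\cat{M}$, which need not be cofibrant in $\cat{M}$ unless, for instance, $E$ itself is cofibrant in $\cat{M}$. Without cofibrancy of $QE$ in $\Mod E$ you lose both facts your argument requires: that $FQE$ is cofibrant as a right $E'$-module (resp.\ fibrant in $\cat{M}$ in the contravariant case), which is what lets Proposition~\ref{prop-enriched-general} apply; and that $F$, being left Quillen only for the right-module structures, carries the weak equivalence $QE\to E$ to anything controllable, which is what the subsequent theorems need from $FQE\to FE$. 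So either add the hypothesis that $E$ (or the unit $S$) is cofibrant in $\cat{M}$, or produce a bimodule replacement of $E$ whose underlying right $E$-module is cofibrant (a bar-type resolution, say); as written, this step is a genuine gap, albeit one inherited from the terseness of the original.
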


In applications, it is sometimes useful to note that this corollary
only requires $F$ to be left Quillen as a functor to $\Mod E'$ with
\textbf{some} model structure where the weak equivalences are the maps
of $\Mod E'$ which are weak equivalences in $\cat{M}$.  Indeed, the
model structure on $\Mod E'$ is never used in the proof of this
corollary except that $\ho \Mod E'$ is the target of the functors, and
$\ho \Mod E'$ depends only on the weak equivalences of $\Mod E'$.  

We also note that this corollary would remain true if we used $QFE$
instead of $FQE$, but that would go against one of the basic
principles of model category theory, to cofibrantly replace objects
BEFORE applying left Quillen functors.

\section{Natural transformations of Quillen functors}\label{sec-Quillen}

The object of this section is to find conditions under which the
derived natural transformation 
\[
L\tau \mathcolon LF \xrightarrow{}LG
\]
of a natural transformation 
\[
\tau \mathcolon F\xrightarrow{}G
\]
of left	Quillen functors is a natural isomorphism.  

\begin{theorem}\label{thm-derived-cofibrant}
Let $\cat{C}$ and $\cat{D}$ be model categories, $F,G\mathcolon
\cat{C}\xrightarrow{}\cat{D}$ be left Quillen functors, and $\tau
\mathcolon F\xrightarrow{}G$ be a natural transformation.  Suppose
$\cat{C}$ is cofibrantly generated so that the domains of the
generating cofibrations are cofibrant, and $\tau_{X}$ is a weak
equivalence for $X$ any domain or codomain of a generating
cofibration.  Then $L\tau\mathcolon LF\xrightarrow{}LG$ is a natural
isomorphism.  
\end{theorem}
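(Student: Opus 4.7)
The plan is to reduce to proving that $\tau_X$ is a weak equivalence for every cofibrant $X$, since by construction $(L\tau)_X = \tau_{QX}$ and $QX$ is cofibrant. Weak equivalences are closed under retracts, and every cofibrant object is a retract of an $I$-cell complex (where $I$ denotes the given generating set of cofibrations), so it suffices to treat the case where $X$ is an $I$-cell complex. I would proceed by transfinite induction along a cellular presentation $X = \varinjlim_{\beta < \lambda} X_\beta$ with $X_0$ initial, each transition $X_\beta \to X_{\beta+1}$ a pushout of some $f_\beta \colon A_\beta \to B_\beta$ in $I$ along a map $A_\beta \to X_\beta$, and $X_\beta = \varinjlim_{\gamma < \beta} X_\gamma$ at limit ordinals.

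The base case is immediate: as left adjoints, $F$ and $G$ preserve initial objects, so $\tau_{X_0}$ is an isomorphism. For a successor step, applying $F$ and $G$ to the defining pushout yields two pushout squares in $\cat{D}$, and the naturality of $\tau$ gives a morphism of spans between them. Because $F$ and $G$ are left Quillen and the domains of $I$ are cofibrant (so the codomains are too, being pushouts of cofibrations from cofibrant objects), both $F(f_\beta)$ and $G(f_\beta)$ are cofibrations between cofibrant objects in $\cat{D}$. By hypothesis $\tau_{A_\beta}$ and $\tau_{B_\beta}$ are weak equivalences, and by the inductive hypothesis $\tau_{X_\beta}$ is a weak equivalence; moreover $X_\beta$ itself is cofibrant, being obtained from $\emptyset$ by successive cofibrations. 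The cube (or gluing) lemma for pushouts of cofibrations between cofibrant objects then shows that the induced map on pushouts, namely $\tau_{X_{\beta+1}}$, is a weak equivalence.

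At a limit ordinal $\beta$, I would invoke the dual fact that a transfinite composition of weak equivalences between cofibrant objects along cofibrations is again a weak equivalence; applied to the map of $\beta$-sequences $\{F(X_\gamma)\}_{\gamma < \beta} \to \{G(X_\gamma)\}_{\gamma < \beta}$, this yields that $\tau_{X_\beta}$ is a weak equivalence. Combining the three cases completes the induction, and the retract argument finishes the proof. The main technical obstacle is the bookkeeping around the successor step: one must carry along the inductive assertion that each $X_\beta$ is cofibrant and each structure map in the cellular filtration is a cofibration, so that the hypotheses of the cube lemma are available at every stage. The cofibrancy-of-domains assumption is exactly what keeps this induction honest; without it, the relevant objects in $\cat{D}$ might fail to be cofibrant and the gluing lemma would no longer apply.
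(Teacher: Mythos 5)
Your proposal is correct and follows essentially the same route as the paper: reduce to cofibrant objects presented as retracts of cell complexes, then run a transfinite induction using the cube (gluing) lemma at successor stages and the comparison-of-sequences result (Hirschhorn's Proposition~15.10.12, extended to transfinite sequences) at limit stages. The only cosmetic difference is that you phrase the limit step as a fact about transfinite compositions rather than as a levelwise weak equivalence of cofibrant sequences with cofibration transition maps, but the application you describe is exactly the latter, which is what the paper invokes.
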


\begin{proof}
It suffices to show that $\tau_{Y}$ is a weak equivalence for all
cofibrant $Y$.  Denote the generating cofibrations of $\cat{C}$ by
$I$.  Any cofibrant $Y$ is a retract of an object $X$ such that the
map $0\xrightarrow{}X$ is a transfinite composition of pushouts of
maps of $I$.  It then suffices to prove that $\tau_{X}$ is a weak
equivalence, which we do by transfinite induction.  The base case is
clear since $F$ and $G$ preserve the initial object.  For the
successor ordinal step, we have a pushout diagram 
\[
\begin{CD}
A @>f>> B \\
@VVV @VVV \\
X_{\alpha } @>>> X_{\alpha +1}
\end{CD}
\]
where $f$ is a map of $I$, and $\tau_{X_{\alpha}}$ is a weak
equivalence.  This gives us two pushout squares of cofibrant objects
\[
\begin{CD}
FA @>>> FB \\
@VVV @VVV \\
FX_{\alpha } @>>> FX_{\alpha +1}
\end{CD}
\]
and
\[
\begin{CD}
GA @>>> GB \\
@VVV @VVV \\
GX_{\alpha } @>>> GX_{\alpha +1}
\end{CD}
\]
in which the top horizontal map is a cofibration.  The natural
transformation $\tau$ defines a map from the top square to the bottom one,
which is a weak equivalence on every corner except possibly the
right bottom square.  The cube lemma~\cite[Lemma~5.2.6]{hovey-model}
then implies $\tau$ is a weak equivalence on the bottom right corner
as well.  

For the limit ordinal step of the induction, we have a transfinite
composition $X_{\beta}=\colim_{\alpha <\beta } X_{\alpha}$ of
cofibrant objects, where each map $X_{\alpha}\xrightarrow{}X_{\alpha
+1}$ is a cofibration.  This gives a diagram of cofibrant objects 
\[
\begin{CD}
0 @>>> \dotsb @>>> FX_{\alpha } @>>> FX_{\alpha +1} @>>> \dotsb \\
@VVV @. @V\tau_{X_{\alpha}}VV @VV\tau_{X_{\alpha +1}}V @. \\
0 @>>> \dotsb @>>> GX_{\alpha} @>>> GX_{\alpha +1} @>>> \dotsb 
\end{CD}
\]
in which each vertical arrow is a weak equivalence and each horizontal
arrow is a cofibration.  Then~\cite[Proposition~15.10.12]{hirschhorn}
implies $\phi_{X_{\beta}}$ is also a weak equivalence, completing the
induction.  This is not quite accurate, because Hirshhorn states
Proposition~15.10.12 only for ordinary sequences, but the same proof
works for transfinite sequences.  
\end{proof}

There is another version of Theorem~\ref{thm-derived-cofibrant} when
$\Mod E'$ is left proper.  Recall that a model category is
\textbf{left proper} when the pushout of a weak equivalence through a
cofibration is again a weak equivalence.  

\begin{theorem}\label{thm-derived-proper}
Let $\cat{C}$ and $\cat{D}$ be model categories, $F,G\mathcolon
\cat{C}\xrightarrow{}\cat{D}$ be left Quillen functors, and $\tau
\mathcolon F\xrightarrow{}G$ be a natural transformation.  Suppose
$\cat{C}$ is cofibrantly generated, $\cat{D}$ is left proper, and
$\tau_{X}$ is a weak equivalence for $X$ any domain or codomain of a
generating cofibration.  Then $L\tau\mathcolon LF\xrightarrow{}LG$ is
a natural isomorphism.
\end{theorem}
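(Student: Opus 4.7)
The plan is to mimic the transfinite induction used to prove Theorem~\ref{thm-derived-cofibrant}, showing that $\tau_X$ is a weak equivalence for every cofibrant $X$ in $\cat{C}$. As before, it suffices to treat an $X$ that is built from $0$ by a transfinite composition of pushouts of generating cofibrations, and to induct on the cellular filtration $\{X_\alpha\}$. The base case $X_0 = 0$ is immediate since $F$ and $G$ preserve the initial object. The new ingredient is needed at the successor step, because the cube lemma~\cite[Lemma~5.2.6]{hovey-model} used in Theorem~\ref{thm-derived-cofibrant} required the corners of the relevant pushout squares to be cofibrant; here the domain $A$ and codomain $B$ of a generating cofibration of $\cat{C}$ need not be cofibrant, so $FA, FB, GA, GB$ need not be cofibrant in $\cat{D}$.

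To handle the successor step, I would exploit left properness of $\cat{D}$. Given the pushout square $X_\alpha \leftarrow A \xrightarrow{f} B \to X_{\alpha+1}$ in $\cat{C}$, applying $F$ and $G$ and the natural transformation $\tau$ produces a morphism of spans
\[
\bigl(FX_\alpha \leftarrow FA \to FB\bigr) \longrightarrow \bigl(GX_\alpha \leftarrow GA \to GB\bigr)
\]
in which every vertical map is a weak equivalence (the one over $X_\alpha$ by inductive assumption, the other two by the hypothesis on $\tau$), and in which the horizontal arrows $FA \to FB$ and $GA \to GB$ are cofibrations since $F$ and $G$ are left Quillen. The gluing lemma in a left proper model category (see \cite[Proposition~13.5.4]{hirschhorn}) then yields that the induced map on pushouts, which is precisely $\tau_{X_{\alpha+1}} \mathcolon FX_{\alpha+1} \to GX_{\alpha+1}$, is a weak equivalence.

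For the limit ordinal step, although the generating cofibrations may have non-cofibrant domains, each intermediate cellular object $X_\alpha$ is itself cofibrant in $\cat{C}$ (being built from $0$ by cofibrations), so the objects $FX_\alpha$ and $GX_\alpha$ are cofibrant in $\cat{D}$, and the transition maps $FX_\alpha \to FX_{\alpha+1}$, $GX_\alpha \to GX_{\alpha+1}$ are cofibrations. This is exactly the setup of~\cite[Proposition~15.10.12]{hirschhorn} (in its transfinite form, as noted in the proof of Theorem~\ref{thm-derived-cofibrant}), so the limit step closes verbatim. The main obstacle in adapting the earlier proof is thus the loss of the cube lemma at successor ordinals; left properness of $\cat{D}$ is the precise tool that supplies its replacement via the gluing lemma, and everything else carries over.
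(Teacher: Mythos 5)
Your proposal is correct and is essentially identical to the paper's proof: the paper likewise reuses the transfinite induction of Theorem~\ref{thm-derived-cofibrant} verbatim, replacing the cube lemma at the successor step with~\cite[Proposition~13.5.4]{hirschhorn}, which is exactly where left properness of $\cat{D}$ is needed.
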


\begin{proof}
Use the same proof as that of Theorem~\ref{thm-derived-cofibrant},
except replace the use of the cube lemma
with~\cite[Proposition~13.5.4]{hirschhorn}, which requires that
$\cat{D}$ be left proper.  
\end{proof}

Theorem~\ref{thm-derived-cofibrant} simplifies further in the stable
situation.  Recall that a model category $\cat{C}$ is called
\textbf{stable} if it is pointed and the suspension functor is an
equivalence of $\ho \cat{C}$. This hypothesis makes $\ho \cat{C}$ a
triangulated category~\cite[Ch.7]{hovey-model}.

\begin{corollary}\label{cor-derived-cofibrant}
Let $\cat{C}$ and $\cat{D}$ be model categories, $F,G\mathcolon
\cat{C}\xrightarrow{}\cat{D}$ be left Quillen functors, and $\tau
\mathcolon F\xrightarrow{}G$ be a natural transformation.  Suppose
$\cat{C}$ is cofibrantly generated, $\cat{D}$ is stable, and
$\tau_{X}$ is a weak equivalence for $X$ any cokernel of a generating
cofibration of $\cat{C}$.  Then $L\tau\mathcolon LF\xrightarrow{}LG$
is a natural isomorphism.
\end{corollary}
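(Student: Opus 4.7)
The plan is to run the proof of Theorem~\ref{thm-derived-cofibrant} essentially verbatim, replacing its cube-lemma step with a triangulated five-lemma argument in $\ho \cat{D}$ that is available because $\cat{D}$ is stable. It suffices to show that $\tau_X$ is a weak equivalence for every cofibrant $X$, and by retract closure one may assume that $0 \to X$ is presented as a transfinite composition of pushouts of maps $f$ in the set $I$ of generating cofibrations of $\cat{C}$. As in Theorem~\ref{thm-derived-cofibrant}, the base case and the limit-ordinal step are formal: $F$ and $G$ preserve initial objects, and the same appeal to~\cite[Proposition~15.10.12]{hirschhorn} handles transfinite composition along cofibrations of cofibrant objects.

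For the successor step, consider the usual pushout square with horizontal map $f \mathcolon A \to B$ in $I$ and induced map $X_\alpha \to X_{\alpha+1}$, where $X_\alpha$ is cofibrant and $\tau_{X_\alpha}$ is a weak equivalence by induction. The pushout property identifies the cokernel of $X_\alpha \to X_{\alpha+1}$ with the cokernel $C$ of $f$, and $0 \to C$ is itself a pushout of the cofibration $f$, hence a cofibration; so $C$ is cofibrant. Applying the left Quillen functor $F$ then produces a cofibration $FX_\alpha \to FX_{\alpha+1}$ of cofibrant objects in $\cat{D}$ with cofiber $FC$, and analogously for $G$. Stability of $\cat{D}$ turns each of these cofiber sequences into a distinguished triangle in $\ho \cat{D}$, and $\tau$ supplies a morphism between them. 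Since $\tau_{X_\alpha}$ is a weak equivalence by induction and $\tau_C$ is a weak equivalence by hypothesis (as $C$ is a cokernel of a generating cofibration), the triangulated five-lemma in $\ho \cat{D}$ forces $\tau_{X_{\alpha+1}}$ to be an isomorphism in $\ho \cat{D}$, i.e., a weak equivalence in $\cat{D}$.

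The only technical point that requires attention is verifying that $C$ is cofibrant and that $FC$ really is the cofiber of $FX_\alpha \to FX_{\alpha+1}$; both follow from standard model-category bookkeeping together with $F$ being a left adjoint, but must be kept straight. Beyond that, the argument is essentially free: stability of $\cat{D}$ is precisely what permits the weaker hypothesis ``iso on cokernels'' to replace the stronger ``iso on both domains and codomains'' needed in Theorem~\ref{thm-derived-cofibrant}, since the five-lemma determines an isomorphism on any vertex of a distinguished triangle from isomorphisms on the other two.
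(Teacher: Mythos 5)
Your proposal is correct and follows the paper's proof essentially verbatim: the paper likewise reuses the transfinite induction of Theorem~\ref{thm-derived-cofibrant} and replaces only the successor-ordinal step, identifying the cofiber of $X_\alpha \to X_{\alpha+1}$ with the cokernel $C$ of the generating cofibration and applying the triangulated five-lemma to the morphism of exact triangles $FX_\alpha \to FX_{\alpha+1} \to FC \to \Sigma FX_\alpha$ over the corresponding triangle for $G$. Your added remarks on checking that $C$ is cofibrant and that $F$ preserves the cofiber are exactly the right bookkeeping points, and your observation that stability is what lets the ``cokernel'' hypothesis replace the ``domain and codomain'' hypothesis matches the paper's intent.
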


\begin{proof}
The proof is the same as the proof of
Theorem~\ref{thm-derived-cofibrant} except in the successor ordinal
case.  Recall that in this case we have a pushout diagram 
\[
\begin{CD}
A @>f>> B \\
@VVV @VVV \\
X_{\alpha } @>>g_{\alpha }> X_{\alpha +1}
\end{CD}
\]
where $f$ is a generating cofibration.  Let $C$ denote the cokernel
of $f$, so that $g_{\alpha}$ is a cofibration of cofibrant objects
with cokernel $C$.  Then we have the diagram 
\[
\begin{CD}
FX_{\alpha} @>>> FX_{\alpha +1} @>>>FC @>>> \Sigma FX_{\alpha} \\
@VVV @VVV @VVV @VVV \\
GX_{\alpha} @>>> GX_{\alpha +1} @>>> GC @>>> \Sigma GX_{\alpha}
\end{CD}
\]
of exact triangles in the triangulated category $\ho \cat{D}$, where
the vertical maps are the natural transformation $L\tau$.  These
vertical maps are isomorphisms on $FX_{\alpha}$ and on $FC$, and so
must also be an isomorphism on $FX_{\alpha +1}$.  
\end{proof}

We also have another version of Corollary~\ref{cor-derived-cofibrant}
in the stable case.  For this to make sense, we recall that a
\textbf{localizing subcategory} in a triangulated category is a
full triangulated subcategory closed under retracts and arbitrary
coproducts.  The intersection of all localizing subcategories
containing a set $\cat{G}$ is written $\loc{\cat{G}}$, and is the
smallest localizing subcategory containing $\cat{G}$.  

\begin{theorem}\label{thm-derived-stable}
Let $\cat{C}$ and $\cat{D}$ be model categories, $F,G\mathcolon
\cat{C}\xrightarrow{}\cat{D}$ be left Quillen functors, and $\tau
\mathcolon F\xrightarrow{}G$ be a natural transformation.  Suppose
$\cat{C}$ and $\cat{D}$ are stable, and there is a class $\cat{G}$ of
objects of $\ho \cat{C}$ such that $\loc{\cat{G}}=\ho \cat{C}$ and
$(L\tau)_{X}$ is an isomorphism for $X\in \cat{G}$.  Then
$L\tau\mathcolon LF\xrightarrow{}LG$ is a natural isomorphism.
\end{theorem}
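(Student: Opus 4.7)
The natural plan is to show that the full subcategory
\[
\cat{S} = \{ X \in \ho \cat{C} \mathcolon (L\tau)_{X} \text{ is an isomorphism in } \ho \cat{D} \}
\]
is a localizing subcategory of $\ho \cat{C}$. By hypothesis $\cat{G} \subseteq \cat{S}$, so once this is established we get $\loc{\cat{G}} \subseteq \cat{S}$, and since $\loc{\cat{G}} = \ho \cat{C}$ by assumption, this forces $\cat{S} = \ho \cat{C}$, which is exactly the statement that $L\tau$ is a natural isomorphism.

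To verify that $\cat{S}$ is localizing, I would check the three closure properties separately. Closure under retracts is immediate, because isomorphisms in any category (in particular $\ho \cat{D}$) are stable under retracts. Closure under arbitrary coproducts uses that both $LF$ and $LG$ are left adjoints on the homotopy categories: a left Quillen functor between model categories with coproducts descends to a left adjoint $LF \mathcolon \ho \cat{C} \to \ho \cat{D}$, and hence preserves coproducts. Thus for any family $(X_{i})$ in $\cat{S}$, the component $(L\tau)_{\coprod X_{i}}$ is canonically identified with $\coprod (L\tau)_{X_{i}}$, and a coproduct of isomorphisms is an isomorphism.

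Closure under triangles is the most substantial step. Since $\cat{C}$ and $\cat{D}$ are stable, their homotopy categories are triangulated (by \cite[Ch.~7]{hovey-model}), and a left Quillen functor between stable model categories induces an exact functor of triangulated categories; in particular $LF$ and $LG$ both send distinguished triangles to distinguished triangles. Given a distinguished triangle $X \to Y \to Z \to \Sigma X$ in $\ho \cat{C}$ with two of the three objects in $\cat{S}$, applying $LF$ and $LG$ and linking the resulting triangles by the components of $L\tau$ gives a commutative ladder between distinguished triangles in $\ho \cat{D}$; the five lemma for triangulated categories then forces the remaining vertical map to be an isomorphism, so the third object is in $\cat{S}$ as well.

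The main (essentially bookkeeping) point to verify is that $L\tau$ really is a natural transformation of exact functors: that is, that the squares obtained by applying $L\tau$ to the connecting maps $Z \to \Sigma X$ commute up to the suspension isomorphism. This amounts to checking that the comparison map $LF(\Sigma X) \xrightarrow{\sim} \Sigma(LF(X))$ supplied by the exact-functor structure is natural in $F$ along $\tau$, which follows formally from the construction of the triangulated structure on $\ho \cat{C}$ and $\ho \cat{D}$ out of cofiber sequences, together with the fact that $\tau$ is defined on all of $\cat{C}$ and so commutes with the pushouts used to build mapping cones. Once this naturality is in hand, the three closure properties above complete the proof.
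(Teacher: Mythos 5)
Your proposal is correct and follows essentially the same route as the paper: one shows that the full subcategory of objects $X$ with $(L\tau)_X$ an isomorphism is localizing, using that $LF$ and $LG$ preserve exact triangles, coproducts, and suspensions, and then concludes from $\loc{\cat{G}}=\ho\cat{C}$. The paper simply cites \cite[Section~6.4]{hovey-model} for the exactness and coproduct-preservation of derived left Quillen functors rather than spelling out the closure properties and the naturality bookkeeping as you do.
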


\begin{proof}
Because $F$ and $G$ are a left Quillen functors, $LF$ and $LG$
preserve exact triangles (see~\cite[Section~6.4]{hovey-model}),
coproducts, and suspensions.   Hence the collection of all $X$ such
that $(L\tau)_{X}$ is an isomorphism is a localizing subcategory.
Since it contains $\cat{G}$, it contains all of $\ho \cat{C}$.  
\end{proof}

\section{The Eilenberg-Watts theorem}\label{sec-general}

We can now combine the results of the last two sections to prove
homotopical versions of the Eilenberg-Watts theorem.  

\begin{theorem}\label{thm-ultimate}
Suppose $\cat{M}$ is a strongly cofibrantly generated, symmetric
monoidal model category in which the domains of the generating
cofibrations are cofibrant.  Let $E$ and $E'$ be monoids in $\cat{M}$,
and $F\mathcolon \Mod E\xrightarrow{}\Mod E'$ be a left Quillen
$\cat{M}$-functor.  Suppose that the composite
\[
A\otimes FQE\xrightarrow{}F (A\otimes QE) \xrightarrow{} F (A\otimes E)
\]
is a weak equivalence when $A$ is a domain or codomain of one of the
generating cofibrations of $\cat{M}$.  Then there is a natural
isomorphism
\[
QX\otimes_{E} FQE \xrightarrow{} FQX = (LF) (X)
\]
of functors on $\ho \Mod E$.  Similarly, if $F\mathcolon \Mod
E\xrightarrow{}\Mod E'$ is a contravariant left Quillen
$\cat{M}$-functor such that the composite 
\[
F (A\otimes E) \xrightarrow{} \Hom (A,FE) \xrightarrow{} \Hom (A, FQE)
\]
is a weak equivalence when $A$ is a domain or codomain of one of the
generating cofibrations of $\cat{M}$, then there is a natural
isomorphism
\[
(LF) (X)= FQX \xrightarrow{} \Hom_{E} (QX, FQE)
\]
of functors on $\ho \Mod E$.  
\end{theorem}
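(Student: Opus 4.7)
The plan is to apply clause~(2) of Theorem~\ref{thm-Quillen} (that is, Theorem~\ref{thm-derived-cofibrant}) to the derived natural transformation $L\tau$ produced by Corollary~\ref{cor-enriched-general}, reducing the claim to a check on the domains and codomains of a set of generating cofibrations of $\Mod E$. For the contravariant half of the theorem, the contravariant analogue of Theorem~\ref{thm-derived-cofibrant} is used, proved by the same transfinite induction but with pushouts of cofibrations replaced by pullbacks of fibrations.

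First I would verify that both sides of $\tau$ are values at $X$ of left Quillen functors from $\Mod E$ to $\Mod E'$. The right-hand side is $F$ by hypothesis. For the left-hand side, $QE$ is cofibrant in $\Mod E$ and $F$ is left Quillen, so $FQE$ is cofibrant in $\Mod E'$; combined with the $E$-$E'$-bimodule structure on $FQE$ implicit in the construction of $L\tau$ in Corollary~\ref{cor-enriched-general}, Proposition~\ref{prop-enriched-general} identifies $X \mapsto X \otimes_E FQE$ as a left Quillen functor. The contravariant case is analogous: $F$ sends the cofibration $0 \to QE$ to a fibration $FQE \to *$, so $FQE$ is fibrant in $\cat{M}$, carries a natural $E \otimes E'$-module structure, and Proposition~\ref{prop-enriched-general} makes $X \mapsto \Hom_E(X, FQE)$ a contravariant left Quillen functor.

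Since $\cat{M}$ is strongly cofibrantly generated, $\Mod E$ has generating cofibrations $I \otimes E$, where $I$ is a set of generating cofibrations of $\cat{M}$. Each domain and codomain of a map in $I \otimes E$ has the form $A \otimes E$ with $A$ a domain or codomain of a map in $I$. The standing hypothesis that the domains in $I$ are cofibrant in $\cat{M}$ implies that the codomains are too, and so each such $A \otimes E$ is cofibrant in $\Mod E$ because $-\otimes E \colon \cat{M} \to \Mod E$ is left Quillen. Clause~(2) of Theorem~\ref{thm-Quillen} then reduces the problem to showing that $(L\tau)_{A \otimes E}$ is an isomorphism in $\ho \Mod E'$ for each such $A$.

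For cofibrant $X = A \otimes E$ take $QX = X$, giving $QX \otimes_E FQE \cong A \otimes FQE$; unwinding the definition of $\tau$ in the proof of Proposition~\ref{prop-general-map}, together with the naturality of the $\cat{M}$-functor structure map of $F$, identifies $(L\tau)_{A \otimes E}$ with the composite
\[
A \otimes FQE \to F(A \otimes QE) \to F(A \otimes E)
\]
of the covariant hypothesis, which is a weak equivalence by assumption. The contravariant case identifies $(L\tau)_{A \otimes E}$ with the composite $F(A \otimes E) \to \Hom(A, FE) \to \Hom(A, FQE)$ of the contravariant hypothesis. I expect the main obstacle to be exactly this last identification: one must check that the $E$-$E'$-bimodule structure on $FQE$ that defines the tensor product $QX \otimes_E FQE$ is the same one that makes the $\cat{M}$-functor structure map of $F$ yield the stated composite on $A \otimes E$. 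Everything else is a direct combination of Propositions~\ref{prop-general-map}, \ref{prop-enriched-general}, Corollary~\ref{cor-enriched-general}, and clause~(2) of Theorem~\ref{thm-Quillen}.
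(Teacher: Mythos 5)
Your proposal is correct and follows essentially the same route as the paper: identify both sides of $\tau$ as left Quillen functors via Proposition~\ref{prop-enriched-general} (using that $FQE$ is cofibrant in $\Mod E'$, resp.\ fibrant in $\cat{M}$) and then apply Theorem~\ref{thm-derived-cofibrant}, checking the hypothesis on the domains and codomains $A\otimes E$ of the generating cofibrations $I\otimes E$ of $\Mod E$. The identification of $\tau_{A\otimes E}$ with the hypothesized composite, which you rightly flag as the one point needing care, is exactly the step the paper also leaves implicit (it is addressed only in the remark following the theorem statement).
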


Note that in the composite 
\[
A\otimes FQE\xrightarrow{}F (A\otimes QE) \xrightarrow{} F (A\otimes E)
\]
the first map is the structure map of the $\cat{M}$-functor $F$, and
the second map is induced by the weak equivalence
$QE\xrightarrow{}E$.  A similar remark holds in the contravariant
case.  

We also note that the model structure on $\Mod E'$ that we use is
again immaterial in the proof, as long as its weak equivalences are
the maps that are weak equivalences in $\cat{M}$.  In practice, this
means that the condition that $F$ be left Quillen is not as
restrictive as it might appear at first glance.  We also point out
that in case $F$ is a \textit{strict} $\cat{M}$-functor, we should use
Theorem~\ref{thm-strict} to analyze $F$ rather than
Theorem~\ref{thm-ultimate}.

\begin{proof}
Begin with the covariant case.
Proposition~\ref{prop-enriched-general} tells us that
\[
\tau \mathcolon X\otimes_{E} FQE \xrightarrow{} FX
\]
is a natural transformation of left Quillen functors.  Now apply 
Theorem~\ref{thm-derived-cofibrant}.  The contravariant case is
similar, where we think of 
\[
\tau \mathcolon FX \xrightarrow{} \Hom_{E} (X, FQE)
\]
as a natural transformation of left Quillen functors from $\Mod E$ to
$(\Mod E')^{\textup{op}}$.  
\end{proof}

We get versions of Theorem~\ref{thm-ultimate} corresponding to each
version of Theorem~\ref{thm-derived-cofibrant}.  Corresponding to
Theorem~\ref{thm-derived-proper}, we have the following theorem.

\begin{theorem}\label{thm-left-proper}
Suppose the hypotheses of Theorem~\ref{thm-ultimate} hold true, except
that rather than assuming the domains of the generating cofibrations
of $\cat{M}$ are cofibrant, we instead assume that $\Mod E'$ is left
proper in the covariant case and right proper in the contravariant
case.  Then the conclusions of Theorem~\ref{thm-ultimate} remain true.
\end{theorem}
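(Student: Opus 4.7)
The plan is to mimic the proof of Theorem~\ref{thm-ultimate} essentially verbatim, substituting Theorem~\ref{thm-derived-proper} for Theorem~\ref{thm-derived-cofibrant} at the one spot where the cofibrancy of the domains of generating cofibrations was used. Concretely, in the covariant case, Proposition~\ref{prop-enriched-general} shows that $X\mapsto X\otimes_{E}FQE$ and $X\mapsto FX$ are left Quillen functors $\Mod E\xrightarrow{}\Mod E'$, and Proposition~\ref{prop-general-map} gives a natural transformation $\tau$ between them. The strongly cofibrantly generated hypothesis gives $\Mod E$ the generating cofibrations $I\otimes E$, so the domains and codomains of the generating cofibrations of $\Mod E$ are exactly the objects $A\otimes E$ for $A$ a domain or codomain of a map in $I$. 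Since $(A\otimes E)\otimes_{E}FQE\cong A\otimes FQE$, the assumed composite $A\otimes FQE\xrightarrow{}F(A\otimes QE)\xrightarrow{}F(A\otimes E)$ is precisely $\tau_{A\otimes E}$ under this identification, and hence it is a weak equivalence on those distinguished objects.

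Now I would invoke Theorem~\ref{thm-derived-proper} with $\cat{C}=\Mod E$ (cofibrantly generated) and $\cat{D}=\Mod E'$ (left proper by hypothesis). All of its hypotheses are satisfied, and the conclusion is that the derived transformation $L\tau\mathcolon QX\otimes_{E}FQE\xrightarrow{}FQX=(LF)(X)$ is a natural isomorphism on $\ho\Mod E$, which is exactly what we want.

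For the contravariant case, view $F$ and $X\mapsto\Hom_{E}(X,FQE)$ as left Quillen functors $\Mod E\xrightarrow{}(\Mod E')^{\textup{op}}$ as in the proof of Theorem~\ref{thm-ultimate}. I would then apply Theorem~\ref{thm-derived-proper} with $\cat{C}=\Mod E$ and $\cat{D}=(\Mod E')^{\textup{op}}$. The only piece to verify is that $(\Mod E')^{\textup{op}}$ is left proper, which is definitionally equivalent to $\Mod E'$ being right proper: cofibrations in $\cat{D}^{\textup{op}}$ are fibrations in $\cat{D}$ and weak equivalences agree, so pushouts of weak equivalences along cofibrations in $\cat{D}^{\textup{op}}$ are pullbacks of weak equivalences along fibrations in $\cat{D}$. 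This matches the new right-proper hypothesis on $\Mod E'$ in the contravariant setting, and once it is in hand the contravariant half of Theorem~\ref{thm-derived-proper} delivers the desired isomorphism exactly as in Theorem~\ref{thm-ultimate}.

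There is no serious obstacle here: the proof is a bookkeeping exercise in swapping one general criterion for another. The only point that deserves a brief sentence in the written proof is the opposite-category translation of left to right properness; everything else is an unchanged quotation of the argument for Theorem~\ref{thm-ultimate}.
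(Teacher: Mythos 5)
Your proof is correct and is exactly the argument the paper intends: the paper gives no separate proof of Theorem~\ref{thm-left-proper}, presenting it as the variant of Theorem~\ref{thm-ultimate} obtained by substituting Theorem~\ref{thm-derived-proper} for Theorem~\ref{thm-derived-cofibrant}, with the contravariant case handled by noting that left properness of $(\Mod E')^{\textup{op}}$ is right properness of $\Mod E'$ --- precisely the two points you spell out.
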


The reason for the right proper hypothesis in the contravariant case
is that we have left Quillen functors from $\Mod E$ to $(\Mod
E')^{\textup{op}}$, so we need $(\Mod E')^{\textup{op}}$ to be left
proper.  

Note that Theorem~\ref{thm-left-proper} will work with any left proper
model structure on $\Mod E'$ in which the weak equivalences are the
maps that are weak equivalences in $\cat{M}$.  Also, the model
structure on $\Mod E'$ that we get from the fact that $\cat{M}$ is
strongly cofibrantly generated will be left proper if $\cat{M}$ is so
and $E'$ is cofibrant in $\cat{M}$, for in this case every cofibration
in $\Mod E'$ is in particular a cofibration in $\cat{M}$.  This model
structure on $\Mod E'$ is right proper whenever $\cat{M}$ is, since
weak equivalences and fibrations are detected in $\cat{M}$.

Corresponding to Corollary~\ref{cor-derived-cofibrant}, we have the
following theorem.  

\begin{theorem}\label{thm-stable-cokernel}
Suppose $\cat{M}$ is a strongly cofibrantly generated, stable,
symmetric monoidal model category where the unit is cofibrant.  Let
$E$ and $E'$ be monoids in $\cat{M}$, and $F\mathcolon \Mod
E\xrightarrow{}\Mod E'$ be a left Quillen $\cat{M}$-functor.  Suppose
that the structure map
\[
A\otimes FE\xrightarrow{} F (A\otimes E)
\]
is a weak equivalence when $A$ is a cokernel of one of the generating
cofibrations of $\cat{M}$.  Then there is a natural isomorphism
\[
QX\otimes_{E} FE \xrightarrow{} FQX = (LF) (X)
\]
of functors on $\ho \Mod E$.  Similarly, if $F\mathcolon \Mod
E\xrightarrow{}\Mod E'$ is a contravariant left Quillen
$\cat{M}$-functor such that the structure map
\[
F (A\otimes E) \xrightarrow{} \Hom (A,FE) 
\]
is a weak equivalence when $A$ is a cokernel of one of the
generating cofibrations of $\cat{M}$, then there is a natural
isomorphism
\[
(LF) (X)= FQX \xrightarrow{} \Hom_{E} (QX, FE)
\]
of functors on $\ho \Mod E$.  
\end{theorem}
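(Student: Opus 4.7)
The plan is to imitate the proof of Theorem~\ref{thm-ultimate}, but invoke Corollary~\ref{cor-derived-cofibrant} in place of Theorem~\ref{thm-derived-cofibrant}. The key simplification over Theorem~\ref{thm-ultimate} is that the hypothesis that the unit $S$ is cofibrant will force $E$ itself to be cofibrant in $\Mod E$, so the $FQE$ of Theorem~\ref{thm-ultimate} can be replaced throughout by $FE$, and the natural transformation in play is simply $\tau\mathcolon X\otimes_{E} FE\to FX$ of Proposition~\ref{prop-general-map}.

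First I would check cofibrancy of $E$ in $\Mod E$. Since $S$ is cofibrant, $0\to S$ is a cofibration in $\cat{M}$; applying the left Quillen functor $-\otimes E\mathcolon \cat{M}\to \Mod E$ (which exists by the strongly cofibrantly generated hypothesis) yields that $0\to E$ is a cofibration, i.e.\ $E$ is cofibrant in $\Mod E$. Because $F$ is a left Quillen $\cat{M}$-functor, $FE$ is then cofibrant as a right $E'$-module, and Proposition~\ref{prop-enriched-general} identifies $X\mapsto X\otimes_{E} FE$ as a left Quillen functor. Thus $\tau$ is a natural transformation between left Quillen functors $\Mod E\to \Mod E'$.

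Next I would apply Corollary~\ref{cor-derived-cofibrant}. Stability of $\Mod E'$ is inherited from $\cat{M}$, since weak equivalences, the zero object, and pushouts in $\Mod E'$ are all computed in $\cat{M}$. The corollary then reduces matters to verifying that $\tau_{X}$ is a weak equivalence for $X$ a cokernel of a generating cofibration of $\Mod E$. Such generating cofibrations have the form $f\otimes E$ with $f$ a generating cofibration of $\cat{M}$, and because $-\otimes E$ preserves colimits, the relevant cokernels take the form $A\otimes E$ where $A$ is a cokernel of a generating cofibration of $\cat{M}$. On such an extended module, $\tau_{A\otimes E}$ unwinds under the isomorphism $(A\otimes E)\otimes_{E} FE\cong A\otimes FE$ into the structure map $A\otimes FE\to F(A\otimes E)$ of the $\cat{M}$-functor, which is a weak equivalence by hypothesis. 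Hence $L\tau$ is a natural isomorphism, and since both $QX$ and $FE$ are cofibrant the derived map takes the stated form $QX\otimes_{E} FE\to FQX = (LF)(X)$.

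The contravariant case is analogous. Since $F$ is contravariant left Quillen and $E$ is cofibrant, $FE$ is fibrant in $\Mod E'$ and hence in $\cat{M}$, so Proposition~\ref{prop-enriched-general} makes $X\mapsto \Hom_{E}(X,FE)$ a contravariant left Quillen functor. Regarding this as a (covariant) left Quillen functor into $(\Mod E')^{\textup{op}}$, which is stable because the opposite of a stable model category is stable, Corollary~\ref{cor-derived-cofibrant} reduces the proof to checking that the map $F(A\otimes E)\to \Hom(A,FE)$ is a weak equivalence for $A$ a cokernel of a generating cofibration of $\cat{M}$, which is the hypothesis. No step is technically difficult; the real content is simply the observation that cofibrancy of the unit collapses $FQE$ to $FE$, while stability lets the check at cokernels of generating cofibrations suffice in place of the check at both domains and codomains required by Theorem~\ref{thm-ultimate}.
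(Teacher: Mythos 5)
Your proof follows the paper's argument exactly: cofibrancy of the unit makes $E$ cofibrant in $\Mod E$ (so $QE$ can be dropped and $FE$ is cofibrant over $E'$, resp.\ fibrant in the contravariant case), and then Corollary~\ref{cor-derived-cofibrant} applies once one checks $\tau$ on the extended modules $A\otimes E$. The one step you gloss over is the one the paper singles out as the main point: that $\Mod E'$ is stable. Your justification --- that weak equivalences, the zero object, and pushouts in $\Mod E'$ are computed in $\cat{M}$ --- does not suffice, because the suspension functor on $\ho \Mod E'$ is built from cofibrant replacements and cylinder objects formed in $\Mod E'$, and cofibrations in $\Mod E'$ need not be cofibrations in $\cat{M}$ (that would require $E'$ cofibrant in $\cat{M}$), so there is no immediate identification of the suspension on $\ho\Mod E'$ with the one on $\ho\cat{M}$. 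The paper's argument is to observe that, since the unit $S$ is cofibrant, suspension on $\ho \Mod E'$ is the total left derived functor of $S^{1}\otimes -$ with $S^{1}=\Sigma S$; stability of $\cat{M}$ provides an $S^{-1}\in \ho\cat{M}$ inverting $S^{1}\otimes-$ there, and tensoring with $S^{-1}$ then inverts suspension on $\ho\Mod E'$ as well. With that substitution your proof is complete; the rest (the identification of the cokernels of the generating cofibrations of $\Mod E$ with $A\otimes E$, the unwinding of $\tau_{A\otimes E}$ to the structure map, and the contravariant case) is correct as written.
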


\begin{proof}
Since the unit $S$ is cofibrant in $\cat{M}$, $E$ is cofibrant in
$\Mod E$.  This is the reason we do not need $QE$ anywhere.  The main
point of the proof is that if $\cat{M}$ is stable with cofibrant unit,
so is $\Mod E'$.  Indeed, the suspension functor in $\ho \cat{M}$ or
on $\ho \Mod E'$ is the total derived functor of $S^{1}\otimes -$,
where $S^{1}$ is the suspension of the unit of $\cat{M}$ (this is
where we are assuming the unit is cofibrant).  If the suspension
functor is an equivalence on $\ho \cat{M}$, there is an object
$S^{-1}\in \ho \cat{M}$ so that tensoring with $S^{-1}$ is the inverse
of suspension. But then we can use $S^{-1}$ to define an inverse of
the suspension on $\ho \Mod E'$ as well.  Thus we can apply
Corollary~\ref{cor-derived-cofibrant}.
\end{proof}

Finally, we get the most direct analogue to the original
Eilenberg-Watts theorem as a corollary to
Theorem~\ref{thm-derived-stable}.  Here we need to assume $\cat{M}$ is
stable with cofibrant unit and $\ho \cat{M}$ is \textbf{monogenic}.
This means that the unit $S$ is a compact weak generator of the
triangulated category $\ho \cat{M}$.  Recall that an object $X$ of a
triangulated category $\cat{T}$ is called \textbf{compact} if
$\cat{T}(X, -)_{*}$ preserves coproducts.  The object $X$ is called a
\textbf{weak generator} if the functor $\cat{T} (X, -)_{*}$ is
faithful on objects, so that $Y=0$ in $\cat{T}$ if and only if
$\cat{T} (X,Y)_{*}=0$.  As we will discuss in the next section, many
of the most common stable symmetric monoidal categories are monogenic.
The relevance of the monogenic condition to
Theorem~\ref{thm-derived-stable} is that if $X$ is a compact weak
generator of a triangulated category $\cat{T}$ with all coproducts,
then $\loc{X}=\cat{T}$~\cite[Theorem~2.3.2]{hovey-axiomatic}.  

\begin{theorem}\label{thm-stable-monogenic}
Suppose $\cat{M}$ is a strongly cofibrantly generated, stable,
monogenic, symmetric monoidal model category where the unit is
cofibrant.  Let $E$ and $E'$ be monoids in $\cat{M}$, and $F\mathcolon
\Mod E\xrightarrow{}\Mod E'$ be a left Quillen $\cat{M}$-functor.
Then there is a natural isomorphism
\[
QX\otimes_{E} FE \xrightarrow{} FQX = (LF) (X)
\]
of functors on $\ho \Mod E$.  Similarly, if $F\mathcolon \Mod
E\xrightarrow{}\Mod E'$ is a contravariant left Quillen
$\cat{M}$-functor, then there is a natural isomorphism
\[
(LF) (X)= FQX \xrightarrow{} \Hom_{E} (QX, FE)
\]
of functors on $\ho \Mod E$.  
\end{theorem}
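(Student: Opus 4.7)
The plan is to apply Theorem~\ref{thm-derived-stable} to the natural transformation $\tau$ from Proposition~\ref{prop-general-map}, taking the generating class to be $\cat{G}=\{E\}$. First, because the unit $S$ is cofibrant and $-\otimes E\colon\cat{M}\to\Mod E$ is left Quillen, $E$ is cofibrant in $\Mod E$; since $F$ is left Quillen, $FE$ is then cofibrant in $\Mod E'$. Proposition~\ref{prop-enriched-general} therefore applies directly with $A=FE$, so $T_{FE}\colon X\mapsto X\otimes_E FE$ is a left Quillen $\cat{M}$-functor and $\tau\colon T_{FE}\to F$ is a natural transformation between left Quillen functors $\Mod E\to\Mod E'$. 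As in the proof of Theorem~\ref{thm-stable-cokernel}, the cofibrant-unit hypothesis lets us invert suspension on $\ho\Mod E$ and $\ho\Mod E'$ by tensoring with $S^{-1}$, so both categories are stable, meeting the hypotheses of Theorem~\ref{thm-derived-stable}.

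The principal step, and the main obstacle, is to verify that $\loc{E}=\ho\Mod E$. By \cite[Theorem~2.3.2]{hovey-axiomatic} it suffices to show that $E$ is a compact weak generator of $\ho\Mod E$. For this I would exploit the Quillen adjunction $(-\otimes E)\dashv U$, where $U\colon\Mod E\to\cat{M}$ is the forgetful functor; since $U$ preserves and reflects weak equivalences and $E=L(-\otimes E)(S)$, the derived adjunction gives a natural isomorphism
\[
[E,Y]^{\ho\Mod E}_{*}\;\cong\;[S,UY]^{\ho\cat{M}}_{*}.
\]
Weak generation of $\ho\Mod E$ by $E$ then follows from weak generation of $\ho\cat{M}$ by $S$, since $Y\simeq 0$ in $\ho\Mod E$ if and only if $UY\simeq 0$ in $\ho\cat{M}$. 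Compactness reduces to the claim that $U$ sends the homotopy coproduct in $\ho\Mod E$ to the homotopy coproduct in $\ho\cat{M}$; this is the subtle point, since cofibrant objects of $\Mod E$ need not be cofibrant as $\cat{M}$-objects unless $E$ itself happens to be cofibrant. Verifying it requires a careful use of the ``strongly cofibrantly generated'' hypothesis, together with the fact that $U$ preserves ordinary coproducts and that compactness of $S$ lets us replace underlying coproducts by homotopy coproducts in the monogenic setting.

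With $\loc{E}=\ho\Mod E$ in hand, the remainder is routine: Proposition~\ref{prop-general-map} says that $\tau_E\colon E\otimes_E FE\to FE$ is an isomorphism, and since $E$ is already cofibrant this coincides with $(L\tau)(E)$. Theorem~\ref{thm-derived-stable} then yields a natural isomorphism $LT_{FE}\to LF$ on $\ho\Mod E$, which unpacks to the desired $QX\otimes_E FE\to FQX=(LF)(X)$. The contravariant case is handled in the same way, by viewing $F$ as a left Quillen functor $\Mod E\to(\Mod E')^{\textup{op}}$: the opposite model category has homotopy category $(\ho\Mod E')^{\textup{op}}$, which is triangulated and stable (suspension and loop swap roles), so Theorem~\ref{thm-derived-stable} applies with the single check $(L\tau)(E)$ reducing to the contravariant $\tau_E$ supplied by Proposition~\ref{prop-general-map}.
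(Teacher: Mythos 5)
Your proposal is correct and follows essentially the same route as the paper: reduce to Theorem~\ref{thm-derived-stable} with $\cat{G}=\{E\}$, after checking via Proposition~\ref{prop-enriched-general} that $X\mapsto X\otimes_{E}FE$ is left Quillen, and show that $E$ is a compact weak generator of $\ho \Mod E$ by comparison with $S$ through the forgetful functor $U$, so that $\loc{E}=\ho \Mod E$ by \cite[Theorem~2.3.2]{hovey-axiomatic}. The compactness point you flag as subtle is dispatched in the paper by observing that $U$ preserves all weak equivalences, so $RU=U$ on the nose, and $U$ preserves point-set coproducts because colimits in $\Mod E$ are computed in $\cat{M}$; hence $RU$ preserves coproducts and compactness of $E$ follows from compactness of $S$, with no need for cofibrancy of the underlying objects $UX_{\alpha}$ in $\cat{M}$.
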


\begin{proof}
As discussed in the proof of Theorem~\ref{thm-stable-cokernel}, the
fact that $\cat{M}$ is stable with cofibrant unit $S$ means that $\Mod
E$ is stable for all monoids $E$.  We claim that $E$ is in fact a
compact weak generator of the category $\ho \Mod E$.  For this, let $U$
denote the forgetful functor from $\Mod E$ to $\cat{M}$, so that $U$
preserves and detects weak equivalences.  Since $U$ preserves weak
equivalences, its total right derived functor $RU$ is just $U$ itself;
that is, $(RU)X=UX$.  Hence $RU$ will also preserve coproducts.  Thus
\begin{gather*}
\ho \Mod E (E, \coprod X_{\alpha}) \cong  \ho \cat{M} (S, (RU) (\coprod
X_{\alpha})) \cong  \ho \cat{M} (S, \coprod UX_{\alpha}) \\
\cong \bigoplus \ho \cat{M} (S, UX_{\alpha}) \cong \bigoplus \ho \Mod
E (E, X_{\alpha}).
\end{gather*}
Thus $E$ is compact.  Similarly, if $\ho \Mod E (E, X)_{*}=0$, then
$\ho \cat{M} (S, UX)_{*}=0$, so $UX$ is weakly equivalent to $0=U
(0)$.  Since $U$ detects weak equivalences, $X$ is weakly equivalent
to $0$, so $E$ is a weak generator.  As mentioned above, we then get
that $\loc{E}=\ho \Mod E$ by the proof
of~\cite[Theorem~2.3.2]{hovey-axiomatic} (the statement of that
theorem makes some irrelevant assumptions about a tensor
product). Since $(L\tau)_{E}$ is an isomorphism,
Theorem~\ref{thm-derived-stable} completes the proof.  
\end{proof}

\section{Examples}\label{sec-examples}

In this section, we prove Theorem~\ref{thm-main} by applying our
versions of the Eilenberg-Watts theorem to the standard model
categories of symmetric spectra, chain complexes, simplicial sets, and
topological spaces.

We begin with the symmetric spectra of~\cite{hovey-shipley-smith},
based on simplicial sets.  This is a symmetric monoidal model category
(under the smash product) whose homotopy category is the standard
stable homotopy category of algebraic topology, so it is stable and
monogenic.  It is cofibrantly generated, it satisfies the monoid
axiom, and every object is small, so it is strongly cofibrantly
generated by~\ref{thm-strongly} of Schwede and Shipley.  The unit $S$
is cofibrant in symmetric spectra, making symmetric spectra easier for
us to handle than the $S$-modules
of~\cite{elmendorf-kriz-mandell-may}.  A monoid in symmetric spectra
is frequently called a \textbf{symmetric ring spectrum}, and the
homotopy category of symmetric ring spectra is equivalent to any other
homotopy category of $A_{\infty}$ ring
spectra~\cite{mandell-may-schwede-shipley}. A model category that is
enriched over symmetric spectra is frequently called a
\textbf{spectral model category}, and an enriched functor is called a
\textbf{spectral functor}.

Theorem~\ref{thm-stable-monogenic} then gives an Eilenberg-Watts
theorem for symmetric ring spectra.  

\begin{theorem}\label{thm-spectra}
Symmetric spectra are homotopically self-contained.
\end{theorem}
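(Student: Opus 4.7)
The plan is to deduce Theorem~\ref{thm-spectra} as a direct application of Theorem~\ref{thm-stable-monogenic} to the model category $\cat{M}$ of symmetric spectra. The first step is to verify each hypothesis of that theorem. Symmetric spectra form a closed symmetric monoidal model category with cofibrant unit $S$ by the construction of~\cite{hovey-shipley-smith}. They are cofibrantly generated, satisfy the monoid axiom, and every object is small, so Theorem~\ref{thm-strongly} of Schwede--Shipley shows they are strongly cofibrantly generated. Their homotopy category is the classical stable homotopy category, which gives stability; and the sphere spectrum $S$ is the prototypical compact weak generator there, so $\cat{M}$ is monogenic.

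For the covariant half of self-containment, given a left Quillen spectral functor $F\mathcolon \Mod E\to \Mod E'$ I would take $N=FE$. Proposition~\ref{prop-general-map} endows $N$ with an $E$-$E'$-bimodule structure. Because $S$ is cofibrant, the module $E$ is cofibrant in $\Mod E$, and since $F$ is left Quillen, $FE$ is cofibrant as a right $E'$-module. By Proposition~\ref{prop-enriched-general}, $T_N$ is then a left Quillen functor with $(LT_N)(X)=QX\otimes_E FE$. Theorem~\ref{thm-stable-monogenic} supplies a natural isomorphism $QX\otimes_E FE\to FQX$ of functors on $\ho \Mod E$, which is precisely the required natural isomorphism $LT_N\to LF$.

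For the contravariant half, if $F$ is a contravariant left Quillen spectral functor, Proposition~\ref{prop-general-map} makes $N=FE$ an $E\otimes E'$-module. The left Quillen hypothesis applied to the cofibration $0\to E$ forces $FE\to 0$ to be a fibration in $\Mod E'$, so $FE$ is fibrant in $\cat{M}$. Proposition~\ref{prop-enriched-general} then shows $S_N$ is a contravariant left Quillen functor with $(LS_N)(X)=\Hom_E(QX,FE)$, and the contravariant conclusion of Theorem~\ref{thm-stable-monogenic} yields the natural isomorphism $FQX\to \Hom_E(QX,FE)$, i.e.\ $LF\to LS_N$.

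The proof is largely bookkeeping, since all of the homotopical content has been absorbed into Theorem~\ref{thm-stable-monogenic}; the only substantive step is the verification of hypotheses for symmetric spectra in the first paragraph, where monogenicity — the assertion that $S$ is a compact weak generator of the stable homotopy category — is the one item most worth pausing over. No genuine obstacle is expected beyond this.
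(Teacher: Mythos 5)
Your proposal is correct and follows essentially the same route as the paper: verify that symmetric spectra are strongly cofibrantly generated (via the Schwede--Shipley criterion), stable, monogenic, and have cofibrant unit, then apply Theorem~\ref{thm-stable-monogenic} with $N=FE$ in both the covariant and contravariant cases. Your extra bookkeeping (cofibrancy of $FE$ over $E'$, fibrancy of $FE$ in the contravariant case, and the identification $(LT_N)(X)=QX\otimes_E FE$ via Proposition~\ref{prop-enriched-general}) is exactly what the paper leaves implicit.
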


There are similar theorems for symmetric spectra based on topological
spaces and for the orthogonal spectra
of~\cite{mandell-may-schwede-shipley}.  The only (slight) subtlety is
that no nontrivial object of the category is small with respect to all
maps (although even that problem can be avoided by using the $\Delta
$-generated spaces of Jeff Smith, which are supposed to be a locally
presentable category), so one has to take a little care in proving
that the model categories in question are strongly cofibrantly
generated.

There is also a similar theorem for the $S$-modules
of~\cite{elmendorf-kriz-mandell-may}, even though the unit $S$ is not
cofibrant in that case.  The main issue here is we need
enough control over the unit to be sure that $\Mod E$ is stable and
that $E$ is a small weak generator of $\ho \Mod E$, for any
$S$-algebra $E$.  One cannot point directly to a theorem
in~\cite{elmendorf-kriz-mandell-may} that says this, but it does
follow from the results Chapter~III.  Proposition~III.1.3 is
especially relevant.  

We now turn to chain complexes.  Here the base symmetric monoidal
model category is the category of unbounded chain complexes of abelian
groups $\Ch{\Z}$, with the projective model
structure~\cite[Section~2.3]{hovey-model}.  This is a cofibrantly
generated model category satisfying the monoid axiom, in which every
object is small.  It is therefore strongly cofibrantly generated by
Theorem~\ref{thm-strongly}.  It is also stable and monogenic, and the
unit is cofibrant.  Thus Theorem~\ref{thm-stable-monogenic} applies.
A monoid in this category is a differential graded algebra and an
enriched functor is often called a DG-functor.

\begin{theorem}\label{thm-chain}
$\Ch{\Z}$ is homotopically self-contained. 
\end{theorem}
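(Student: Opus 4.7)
The plan is to apply Theorem~\ref{thm-stable-monogenic} directly; the paragraph preceding the statement indicates that every hypothesis of that theorem is satisfied by $\Ch{\Z}$, so my task is to verify them carefully and then translate the conclusion into the form required by Definition~\ref{defn-self-htpy}.

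First I would record that the projective model structure on $\Ch{\Z}$, together with the graded tensor product over $\Z$, is a cofibrantly generated symmetric monoidal model category in which every chain complex is small and the monoid axiom holds; by Theorem~\ref{thm-strongly} this makes $\Ch{\Z}$ strongly cofibrantly generated. The unit $\Z$, concentrated in degree zero, is cofibrant. Stability of $\Ch{\Z}$ is immediate since suspension on $\ho \Ch{\Z}$ is the shift functor $X \mapsto X[1]$, which is invertible. For monogenicity, I would use the isomorphism $\ho \Ch{\Z} (\Z, X)_{*} \cong H_{*} (X)$: homology commutes with arbitrary coproducts of chain complexes, so $\Z$ is compact, and $H_{*} (X) = 0$ exactly when $X$ is quasi-isomorphic to zero, so $\Z$ is a weak generator.

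Having verified the hypotheses, I would apply Theorem~\ref{thm-stable-monogenic} to any left Quillen $\Ch{\Z}$-functor $F \mathcolon \Mod E \xrightarrow{} \Mod E'$. Setting $N = FE$, which is an $E\text{-}E'$-bimodule by Proposition~\ref{prop-general-map}, I note that $E$ is cofibrant in $\Mod E$ (since $\Z$ is cofibrant in $\Ch{\Z}$), so $FE$ is cofibrant as a right $E'$-module. Proposition~\ref{prop-enriched-general} then guarantees that $T_{N}$ is a left Quillen functor, and Theorem~\ref{thm-stable-monogenic} supplies a natural isomorphism $LT_{N} \xrightarrow{} LF$ of functors on $\ho \Mod E$, which is exactly the data required by Definition~\ref{defn-self-htpy}. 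The contravariant case is parallel: take $N = FE$ as an $E \otimes E'$-module; every object of $\Ch{\Z}$ is fibrant, so Proposition~\ref{prop-enriched-general} makes $S_{N}$ a contravariant left Quillen functor, and Theorem~\ref{thm-stable-monogenic} yields $LF \xrightarrow{} LS_{N}$. I do not anticipate a serious obstacle here; the only step that is not completely routine is the monogenicity verification, and that is a classical property of the derived category $\cat{D} (\Z)$.
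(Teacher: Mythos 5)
Your proposal is correct and follows exactly the route the paper takes: verify that $\Ch{\Z}$ with the projective model structure is strongly cofibrantly generated (via Theorem~\ref{thm-strongly}), stable, monogenic, with cofibrant unit, and then invoke Theorem~\ref{thm-stable-monogenic}. Your additional remarks identifying $N=FE$ and checking the cofibrancy/fibrancy hypotheses of Proposition~\ref{prop-enriched-general} are correct and simply make explicit what the paper leaves implicit.
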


In fact, Theorem~\ref{thm-chain} is actually a special case
of~\cite[Section~6.4]{keller}, where Keller proves that any DG-functor
$F\mathcolon \Mod E \xrightarrow{}\Mod E'$ that commutes with direct
sums has $QX\otimes_{E}FE\cong (LF) (X)$.  That is, he does not assume
that $F$ is left Quillen.  In the special case when $E'$ is an
ordinary ring, we can recover Keller's result, and this is a
worthwhile exercise, as it illustrates that one can often use the
methods of Theorem~\ref{thm-stable-monogenic} to get tighter results
than one would at first expect.  The proof below likely works for
arbitrary DG-algebras as well.

We begin with a useful general lemma. 

\begin{lemma}\label{lem-homotopy}
Suppose $\cat{M}$ is a closed symmetric monoidal, strongly cofibrantly
generated, model category in which the unit $S$ is cofibrant, $E,E'$
are monoids in $\cat{M}$, and $F\mathcolon \Mod E\xrightarrow{}\Mod
E'$ is an $\cat{M}$-functor.  Then $F$ preserves the homotopy relation
on maps between cofibrant and fibrant objects, so preserves weak
equivalences between cofibrant and fibrant objects.  
\end{lemma}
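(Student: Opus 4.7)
The plan is to build a cylinder object on $X$ from a cylinder on the cofibrant unit $S$, push a left homotopy through $F$ via the $\cat{M}$-functor structure map, and correct for the possible non-cofibrancy of $FX$ by precomposing with a cofibrant replacement of $FX$ in $\Mod E'$.

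First, factor the fold map $S\amalg S\to S$ in $\cat{M}$ as a cofibration $(i_0,i_1)\mathcolon S\amalg S\to I$ followed by a weak equivalence $p\mathcolon I\to S$; both $S$ and $I$ are cofibrant. For any cofibrant $Z$ in $\Mod E$ or in $\Mod E'$, Proposition~\ref{prop-strongly} implies (via the pushout-product with $0\to Z$) that $Z\amalg Z\to Z\otimes I$ is a cofibration, and (by its last sentence) that $-\otimes Z$ is a left Quillen functor from $\cat{M}$; Ken Brown's lemma then makes $p\otimes 1_Z\mathcolon Z\otimes I\to Z$ a weak equivalence, so $Z\otimes I$ is a cylinder object for $Z$. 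In particular $X\otimes I$ is a cylinder for $X$, and since $X$ is cofibrant and $Y$ fibrant a homotopy $f\sim g$ is represented by a left homotopy $H\mathcolon X\otimes I\to Y$ with $H\circ(1\otimes i_k)$ equal to $f$ or $g$ for $k=0,1$.

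Using the $\cat{M}$-functor structure map $\sigma$ of $F$, I would form the composite
\[
H' = FH\circ\sigma_{I,X}\mathcolon FX\otimes I\to F(X\otimes I)\to FY.
\]
Naturality of $\sigma$ in its $\cat{M}$-slot, combined with the unit axiom identifying $\sigma_{S,X}$ with the canonical isomorphism $FX\cong F(S\otimes X)$, gives $H'\circ(1\otimes i_0)=Ff$ and $H'\circ(1\otimes i_1)=Fg$. The object $FX\otimes I$ is not known to be a cylinder for $FX$ because $FX$ need not be cofibrant, so I would choose a cofibrant replacement $q\mathcolon QFX\to FX$ in $\Mod E'$. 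By the previous paragraph, $QFX\otimes I$ \emph{is} a cylinder for $QFX$, and bifunctoriality of $\otimes$ implies that the composite $QFX\otimes I\xrightarrow{q\otimes 1_I}FX\otimes I\xrightarrow{H'}FY$ is a genuine left homotopy from $Ff\circ q$ to $Fg\circ q$. Therefore $[Ff\circ q]=[Fg\circ q]$ in $\ho\Mod E'$, and since $[q]$ is an isomorphism we conclude $[Ff]=[Fg]$.

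For the stated consequence about weak equivalences, a weak equivalence $w\mathcolon X\to Y$ between cofibrant-fibrant objects admits a homotopy inverse $w'$, and the preservation of homotopy just established turns $Fw'$ into a two-sided inverse for $Fw$ in $\ho\Mod E'$, so $Fw$ is a weak equivalence. The main obstacle I foresee is the bookkeeping around the $\cat{M}$-functor axioms: ensuring that $\sigma_{S,X}$ really is the canonical iso and that $\sigma_{K,X}$ is natural in $K$, so that the two restrictions of $H'$ correctly recover $Ff$ and $Fg$. Everything else is a routine assembly of Proposition~\ref{prop-strongly}, Ken Brown's lemma, and cofibrant replacement, with the key conceptual move being the passage to $QFX$ that sidesteps the failure of an $\cat{M}$-functor to preserve cofibrancy.
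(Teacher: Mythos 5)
Your proposal is correct and follows essentially the same route as the paper's proof: build a cylinder object $I\otimes X$ from a cylinder $I$ on the cofibrant unit $S$ (justified by Proposition~\ref{prop-strongly} and Ken Brown's lemma) and push the homotopy through the $\cat{M}$-functor structure of $F$. The only differences are cosmetic or in the direction of extra care: you use the tensor structure map $FX\otimes I\to F(X\otimes I)$ where the paper uses its adjoint via the enrichment $\Hom_E(X,Y)\to\Hom_{E'}(FX,FY)$, and your passage to the cofibrant replacement $QFX$ properly addresses the fact that $FX$ need not be cofibrant, a point the paper's terser proof glosses over.
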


Note that this means we could define a derived functor $DF$ for any
$\cat{M}$-functor $F$ via $(DF) (X)=F (QRX)$, where $R$ denotes
fibrant replacement.  However, this would be neither a left nor a
right derived functor in general, and does not seem to have good
properties without further assumptions on $F$.  

\begin{proof}
Let $I$ be a cylinder object in $\cat{M}$ for the unit $S$.  Then
$I\otimes X$ is a cylinder object for any cofibrant $E$-module $X$.
In particular, if $Y$ is a fibrant $E$-module, and $f,g\mathcolon
X\xrightarrow{}Y$ are homotopic, then there is a homotopy $I\otimes
X\xrightarrow{}Y$ between $f$ and $g$.  This corresponds to a map
$I\xrightarrow{}\Hom_{E} (X,Y)$ in $\cat{M}$.  Since $F$ is an
$\cat{M}$-functor, we get an induced map $I\mathcolon \Hom_{E'}
(FX,FY)$, which is a homotopy between $Ff$ and $Fg$.  
\end{proof}

\begin{theorem}\label{thm-keller}[Keller]
Suppose $R$ and $S$ are ordinary rings, and $F\mathcolon
\Ch{R}\xrightarrow{}\Ch{S}$ is a DG-functor that commutes with
arbitrary coproducts.  Then there is a natural isomorphism 
\[
L\tau \mathcolon QX\otimes_{E} FE \xrightarrow{} FQX = (LF) (X)
\]
of functors on $\cat{D} (R)$.  Similarly, if $F\mathcolon \Ch{R}
\xrightarrow{}\Ch{S}$ is a contravariant DG-functor that converts
coproducts to products,then there is a natural isomorphism
\[
(LF) (X)= FQX \xrightarrow{} \Hom_{E} (QX, FE)
\]
of functors on $\cat{D} (R)$.  
\end{theorem}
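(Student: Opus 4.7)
The plan is to adapt the argument of Theorem~\ref{thm-derived-stable} with $\cat{G} = \{R\}$, even though $F$ is not assumed to be left Quillen. Since $R$ is cofibrant in $\Ch{R}$ with the projective model structure, and every object of $\Ch{R}$ is fibrant, Lemma~\ref{lem-homotopy} implies that $F$ takes weak equivalences between cofibrant complexes to weak equivalences. Thus $LF(X) := F(QX)$ descends to a well-defined functor $\cat{D}(R) \to \cat{D}(S)$, and the natural transformation $\tau$ of Proposition~\ref{prop-general-map} (noting $QR = R$) descends to a natural transformation $L\tau \colon QX \otimes_R FR \to FQX$ that is an isomorphism at $X = R$. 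As in the proof of Theorem~\ref{thm-stable-monogenic}, $R$ is a compact weak generator of $\cat{D}(R)$, so $\loc{R} = \cat{D}(R)$; it therefore suffices to show that both $X \mapsto QX \otimes_R FR$ and $LF$ preserve coproducts and exact triangles, so that the full subcategory of $X \in \cat{D}(R)$ on which $L\tau_X$ is an isomorphism is localizing and hence all of $\cat{D}(R)$.

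The first functor is the usual derived tensor product and is manifestly coproduct-preserving and triangulated. For $LF$, preservation of coproducts follows from the hypothesis that $F$ commutes with arbitrary coproducts together with Lemma~\ref{lem-homotopy}: both $Q(\coprod X_i)$ and $\coprod Q X_i$ are cofibrant replacements of $\coprod X_i$, so applying $F$ produces weakly equivalent objects, giving $LF(\coprod X_i) \simeq \coprod LF(X_i)$.

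The main obstacle is to show $LF$ preserves exact triangles, and this is where the DG-structure plays its essential role. The crucial observation is that $F$ preserves contractible cofibrant complexes: if $X$ is cofibrant with $\mathrm{id}_X \simeq 0$, then $\mathrm{id}_{FX} = F(\mathrm{id}_X) \simeq F(0) = 0$ by Lemma~\ref{lem-homotopy}. Every exact triangle in $\cat{D}(R)$ is represented by a cofibration $f \colon X \hookrightarrow Y$ of cofibrant objects whose strict cofiber $Z$ is quasi-isomorphic to the mapping cone $\mathrm{Cone}(f) = Y \oplus X[1]$. Since $F$ commutes with finite direct sums, applying $F$ to the canonical split-at-the-graded-level short exact sequences
\[
0 \to Y \to \mathrm{Cone}(f) \to X[1] \to 0 \quad\text{and}\quad 0 \to X \to CX \to X[1] \to 0
\]
yields short exact sequences of chain complexes in $\Ch{S}$. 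The second, combined with contractibility of $F(CX)$, gives a natural quasi-isomorphism $FX[1] \simeq F(X[1])$ in $\cat{D}(S)$ for cofibrant $X$; substituting this into the triangle coming from the first produces an exact triangle $FX \to FY \to FZ \to FX[1]$ in $\cat{D}(S)$, in which the first map is $F(f)$ by the naturality of the connecting homomorphism. Hence $LF$ is triangulated, completing the argument. The contravariant case is handled dually.
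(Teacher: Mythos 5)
Your proposal is correct and follows the same skeleton as the paper's proof: define $(LF)(X)=F(QX)$ using Lemma~\ref{lem-homotopy} together with the fact that every object of $\Ch{S}$ is fibrant, check that $LF$ is an exact, coproduct-preserving functor, and then run the localizing-subcategory argument with $\loc{R}=\cat{D}(R)$ exactly as in Theorem~\ref{thm-derived-stable} and Theorem~\ref{thm-stable-monogenic}. The one place where you genuinely diverge is the proof that $LF$ is exact. The paper gets this almost for free by viewing $F$ as a functor into the \emph{injective} model structure on $\Ch{S}$, whose cofibrations are the degreewise split monomorphisms, and observing that a DG-functor preserves degreewise split monomorphisms: a degreewise splitting is a degree-$0$, not necessarily closed, element of the Hom-complex, and $F$ acts on that. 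You instead carry out the explicit mapping-cone computation (which is close to what the paper does later, in the proof of Theorem~\ref{thm-Brown-chain}, where left Quillen-ness is likewise unavailable). That route works, but your stated justification --- ``since $F$ commutes with finite direct sums'' --- is not the right reason that $F$ carries the sequences $0\to Y\to C(f)\to X[1]\to 0$ and $0\to X\to CX\to X[1]\to 0$ to short exact sequences: $C(f)$ is a direct sum of $Y$ and $X[1]$ only as graded modules, not as complexes, so additivity of $F$ applied to objects says nothing here. What you need is precisely the paper's observation that the graded splitting is an element of $\Hom_R(C(f),Y)_0$ and is therefore preserved by the DG-structure map of $F$, so that $F$ of a degreewise split short exact sequence is again degreewise split short exact. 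With that substitution your argument for exactness, and hence the whole proof, is complete; the coproduct and suspension steps are handled correctly.
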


\begin{proof}
Lemma~\ref{lem-homotopy} tells us that a DG-functor preserves chain
homotopy.  Because every object is fibrant, then, any DG-functor $G$
has a left derived functor $(LG) (X)=G(QX)$, where $QX$ is a cofibrant
(DG-projective) replacement for $X$ (since weak equivalences between
DG-projective objects are chain homotopy equivalences).  This left
derived functor is automatically exact on $\cat{D} (R)$, as pointed
out to the author by Keller.  Indeed, we can use the injective model
structure on $\cat{D} (S)$~\cite[Theorem~2.3.13]{hovey-model}, in
which cofibrations are degreewise split monomorphisms.  As a functor
to this model structure, a DG-functor like $F$ automatically preserves
cofibrations, and hence $LF$ is exact.  To see this, we show that $F$
preserves degreewise split monomorphisms.  Indeed, if $f\mathcolon
X\xrightarrow{}Y$ is a degreewise split monomorphism, there is a $g\in
\Hom_{R} (Y,X)_{0}$ (which is, of course, not a cycle unless $f$ is
actually split) such that $gf$ is the identity.  Applying $F$, we see
that $Ff$ is also degreewise split.  Altogether then, $L\tau $ is a
natural transformation between exact coproduct-preserving functors on
$\cat{D} (R)$ from that is an isomorphism on $R$.  It is therefore an
isomorphism on the localizing subcategory generated by $R$, which is
$\cat{D} (R)$.
\end{proof}

We now consider simplicial sets, which are of course not stable.  An
excellent description of the model structure on the category $\SSet$
of simplicial sets can be found in~\cite{goerss-jardine}; there is
also a description in~\cite[Chapter~3]{hovey-model}.  We find that
$\SSet$ is a strongly cofibrantly generated, closed symmetric monoidal
(under the product) model category in which every object is
cofibrant. (The fact that every object is cofibrant makes the monoid
axiom automatic, hence $\SSet$ is strongly cofibrantly generated).  We
can therefore apply Theorem~\ref{thm-ultimate}.  The set $I$ of
generating cofibrations consists of the maps $\partial \Delta
[n]\xrightarrow{}\Delta [n]$.  We note that the vertex $n$ is a
simplicial deformation retract of $\Delta
[n]$~\cite[Lemma~3.4.6]{hovey-model}, though no other vertex is.

\begin{theorem}\label{thm-simplicial}
Simplicial sets are homotopically self-contaiend.
\end{theorem}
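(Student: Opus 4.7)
The plan is to apply Theorem~\ref{thm-ultimate} (in both its covariant and contravariant incarnations) with $\cat{M}=\SSet$. Since every simplicial set is cofibrant, the unit $\Delta[0]$ is cofibrant and hence $E=\Delta[0]\otimes E$ is cofibrant in $\Mod E$, so we may take $QE=E$. What remains is to verify the structure-map condition for each of the domains and codomains $A\in\{\partial\Delta[n],\Delta[n]\}$ of the generating cofibrations: for a covariant left Quillen $\SSet$-functor $F\mathcolon\Mod E\to\Mod E'$ we need $A\otimes FE\to F(A\otimes E)$ to be a weak equivalence, and for a contravariant one the dual statement $F(A\otimes E)\to\Hom(A,FE)$. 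The two cases $A=\Delta[n]$ and $A=\partial\Delta[n]$ call for quite different arguments.

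For $A=\Delta[n]$ I would exploit the simplicial deformation retract $\Delta[0]\hookrightarrow\Delta[n]$ onto the vertex $n$ noted just before the theorem. Because this is a simplicial homotopy equivalence of cofibrant objects and every $\SSet$-functor preserves simplicial homotopies (the proof of Lemma~\ref{lem-homotopy} really only uses that $\Delta[1]\otimes X$ is a cylinder object for cofibrant $X$), both functors $(-)\otimes FE$ and $F((-)\otimes E)$ carry $\Delta[0]\hookrightarrow\Delta[n]$ to a weak equivalence in $\Mod E'$. The structure map for $A=\Delta[0]$ is the identity, so the two-out-of-three property applied to the resulting commutative square forces $\tau_{\Delta[n]}$ to be a weak equivalence. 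The contravariant case runs the same way: since $E$ is cofibrant and $F$ is left Quillen from $\Mod E$ to $(\Mod E')^{\textup{op}}$, the object $FE$ is fibrant in $\Mod E'$, so the internal hom $\Hom(-,FE)$ also sends the weak equivalence $\Delta[0]\hookrightarrow\Delta[n]$ of cofibrant simplicial sets to a weak equivalence, and the same 2-out-of-3 trick finishes the job.

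For $A=\partial\Delta[n]$ I would argue by induction on $n$, building $\partial\Delta[n]$ through its skeletal filtration. The base case $n=0$ is immediate since $\partial\Delta[0]=\emptyset$ and left adjoints preserve initial objects. In the inductive step, each successive skeleton is obtained from the previous one by a pushout along a coproduct of maps $\partial\Delta[k]\hookrightarrow\Delta[k]$ for some $k<n$, and by the induction hypothesis together with the previous paragraph, $\tau$ is already known to be a weak equivalence on each of $\partial\Delta[k]$, $\Delta[k]$, and the previous skeleton. Because $(-)\otimes FE$ and $F((-)\otimes E)$ are both left adjoints that turn each attaching map into a cofibration in $\Mod E'$ (using Proposition~\ref{prop-strongly} together with the fact that $FE$ is cofibrant in $\Mod E'$), the cube lemma~\cite[Lemma~5.2.6]{hovey-model} glues the weak equivalence across each pushout step. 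For the contravariant version the same induction runs with the dual gluing statement for pullback squares of fibrations between fibrant objects, available because $\Mod E'$ inherits right properness from $\SSet$. The main bookkeeping hurdle is keeping the double induction (on $n$ and on the skeletal index) straight, but no ideas beyond the cube lemma and its dual are required.
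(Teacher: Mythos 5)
Your proposal is correct, and it follows the paper's strategy exactly up to the treatment of $A=\Delta[n]$ (the deformation retract onto the vertex $n$, preservation of simplicial homotopies as in Lemma~\ref{lem-homotopy}, and two-out-of-three), but it diverges genuinely in the $\partial\Delta[n]$ step. The paper's induction never touches the skeletal filtration of $\partial\Delta[n]$: it first applies the cube lemma to the single pushout square collapsing $\partial\Delta[n]$ to a point, obtaining that $\phi$ is a weak equivalence on the quotient $\Delta[n]/\partial\Delta[n]$, and then constructs an explicit simplicial weak equivalence $g\mathcolon \partial\Delta[n+1]\to\Delta[n]/\partial\Delta[n]$ (spreading one face over the $n$-cell); since both functors preserve weak equivalences between cofibrant objects, this transports the conclusion to $\partial\Delta[n+1]$. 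Your route instead builds $\partial\Delta[n]$ cell by cell from its skeleta, gluing with the cube lemma at each attaching stage; this costs you a double induction and the (standard) identification of the skeletal pushouts, plus the observation that coproducts of weak equivalences between cofibrant objects are weak equivalences, but it buys you a more generic argument that does not depend on cooking up the map $g$ and that visibly generalizes to any cofibrant simplicial set. Two small remarks: for the contravariant gluing you do not actually need right properness of $\Mod E'$ --- the cube lemma applied in $(\Mod E')^{\textup{op}}$ suffices, since all the objects involved are images of cofibrant objects under left Quillen functors to $(\Mod E')^{\textup{op}}$ and hence fibrant in $\Mod E'$; and your identification of $FE$ as fibrant in $\Mod E'$ (cofibrant in the opposite category) is exactly the hypothesis needed to make $\Hom(-,FE)$ left Quillen, so that part is sound.
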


Pointed simplicial sets are also homotopically self-contained, and the
proof is very similar.  

\begin{proof}
We just prove the covariant case, as the contravariant case is
similar.  In view of Theorem~\ref{thm-ultimate}, we have to show that
\[
\phi_{A}\mathcolon A \times FE \xrightarrow{}F (A\times E)
\]
is a weak equivalence for $A=\partial \Delta [n]$ and $A=\Delta [n]$.
For $A=\Delta [n]$, we have a commutative diagram 
\[
\begin{CD}
\Delta [n] \times FE @>>> F (\Delta [n]\times E) \\
@VVV @VVV \\
* \times FE @>>> F (*\times E)
\end{CD}
\]
where the vertical maps are induced by the simplicial homotopy
equivalence $\Delta [n]\xrightarrow{}*$ that collapses $\Delta [n]$
onto the vertex $n$.  The same proof as that of
Lemma~\ref{lem-homotopy} implies that $F$ preserves simplicial
homotopy equivalences.  Thus the vertical maps are simplicial homotopy
equivalences, and the the bottom horizontal map is an isomorphism, so
the top map is a weak equivalence as required.

We prove that $\phi_{\partial \Delta [n]}$ is a weak equivalence by
induction on $n$.  The case $n=0$ is trivial, and the case $n=1$ is
straightforward since $\partial \Delta [1]$ is the coproduct of two
copies of $\Delta [0]$, and $F$ preserves coproducts.  Now suppose
that $\phi_{\partial \Delta [n]}$ is a weak equivalence.  We first
show that $\phi_{\Delta [n]/\partial \Delta [n]}$ is a weak
equivalence using the cube lemma~\cite[Lemma~5.2.6]{hovey-model}.
Indeed, we have two pushout squares 
\[
\begin{CD}
\partial \Delta [n] \times FE @>>> \Delta [n]\times FE \\
@VVV @VVV \\
* \times FE @>>> \Delta [n]/\partial \Delta [n] \times FE
\end{CD}
\]
and 
\[
\begin{CD}
F (\partial \Delta [n]\times E) @>>> F (\Delta [n]\times E) \\
@VVV @VVV \\
F (*\times E) @>>> F (\Delta [n]/\partial \Delta [n]\times E)
\end{CD}
\]
of cofibrant objects, where the top horizontal maps are cofibrations.
The map $\phi$ defines a map from the first pushout square to the
second, which is a weak equivalence at every spot except the lower
right corner.  The cube lemma tells us that it is also a weak
equivalence at the lower right corner.  

Now, there is a map $g\mathcolon \partial \Delta [n+1]
\xrightarrow{}\Delta [n]/\partial \Delta [n]$ of simplicial sets,
which is a weak equivalence.  It is easier to explain this map
geometrically.  The geometric realization of $\partial \Delta [n+1]$
is a triangulation of the $n$-sphere, and the geometric realization of
$\Delta [n]/\partial \Delta [n]$ is the usual CW description of an
$n$-sphere, with one point and one $n$-cell.  What we want to do is to
take one face of $\partial \Delta [n+1]$ and spread it out over the
whole $n$-cell, sending everything else in $\partial \Delta [n+1]$ to
the basepoint.  This is obviously a homotopy equivalence.  We can
realize it simplicially by sending each $k$-simplex of $\partial
\Delta [n+1]$ for $k\leq n$ except $123\dotsb n$ to the simplex
represented by $k$ $0$'s, and sending $123\dotsb n$ to the unique
nondegenerate $n$ simplex of $\Delta [n]/\partial \Delta [n]$.

We then get the commutative diagram below. 
\[
\begin{CD}
\partial \Delta [n+1] \times FE @>>> F (\partial \Delta [n+1]\times E) \\
@Vg\times FE VV @VVF (g\times E) V \\
\Delta [n]/\partial \Delta [n] \times FE @>>> F (\Delta [n]/\partial
\Delta [n]\times E)
\end{CD}
\]
The bottom horizontal map is a weak equivalence, as we have seen.  The
vertical maps are also weak equivalences, because the map $g$ is a
weak equivalence between cofibrant objects, and all functors involved
preserve those (in particular, $FE$ is cofibrant in $\Mod E'$ since
$E$ is cofibrant in $\Mod E$, so the product with $FE$ preserves such
weak equivalences).  Hence the top horizontal map is a weak
equivalence, completing the induction step and the proof.  
\end{proof}

We now consider the case when our base model category is topological
spaces.  Of course, we need a closed symmetric monoidal category of
topological spaces; for definiteness, we choose the compactly
generated weak Hausdorff spaces used
in~\cite{elmendorf-kriz-mandell-may}, and refer to this category as
$\Top $.  The model structure on $\Top$ is described
in~\cite[Section~2.4]{hovey-model}, and it is strongly cofibrantly
generated with generating cofibrations $S^{n-1}\xrightarrow{}D^{n}$
for all $n\geq 0$.  A $\Top$-functor is usually called a continuous
functor.

\begin{theorem}\label{thm-topological}
Topological spaces are homotopically self-contained.  
\end{theorem}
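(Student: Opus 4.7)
The plan is to invoke Theorem~\ref{thm-ultimate}. The generating cofibrations $S^{n-1}\to D^{n}$ of $\Top$ have cofibrant domains and codomains, and because the unit $*$ is cofibrant in $\Top$, every monoid $E$ is cofibrant in $\Mod E$ (applying the left Quillen functor $-\times E$ to $0\to *$). Consequently $QE\to E$ is a weak equivalence between cofibrant $E$-modules, which Ken Brown's lemma sends via the left Quillen functor $F$ to a weak equivalence $FQE\to FE$. It therefore suffices to check that for $A=S^{n-1}$ or $A=D^{n}$ the comparison
\[
\phi_{A}\mathcolon A\times FE \xrightarrow{} F(A\times E)
\]
is a weak equivalence, together with its contravariant analogue.

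For $A=D^{n}$ I argue as in Lemma~\ref{lem-homotopy}: the unit interval $I$ is a cylinder object for $*$, so $I\times X$ is a cylinder for any cofibrant $X\in\Mod E$, and a continuous functor therefore preserves topological homotopy equivalences between cofibrant objects. The collapse $D^{n}\to *$ is such a homotopy equivalence, so both vertical maps in the square
\[
\begin{CD}
D^{n}\times FE @>\phi_{D^{n}}>> F(D^{n}\times E) \\
@VVV @VVV \\
*\times FE @>\cong>> F(*\times E)
\end{CD}
\]
are weak equivalences and the bottom row is an isomorphism, forcing $\phi_{D^{n}}$ to be a weak equivalence.

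For $A=S^{n-1}$ I induct on $n$, closely mirroring the proof of Theorem~\ref{thm-simplicial}. The base case $n=0$ is immediate because $S^{-1}=\emptyset$ and both functors preserve the initial object. For the inductive step, apply $-\times FE$ and $F(-\times E)$ to the standard pushout
\[
\begin{CD}
S^{n-1} @>>> D^{n} \\
@VVV @VVV \\
* @>>> S^{n}
\end{CD}
\]
The top horizontal maps of both resulting squares are cofibrations between cofibrant $E'$-modules: $S^{n-1}\to D^{n}$ is a cofibration in $\Top$, Proposition~\ref{prop-strongly} turns it into a cofibration of cofibrant $E'$-modules after multiplying by $FE$, and separately after multiplying by $E$ and applying the left Quillen $F$. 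The natural transformation $\phi$ is a weak equivalence at three of the four corners by the inductive hypothesis together with the $D^{n}$ and $*$ cases, so the cube lemma \cite[Lemma~5.2.6]{hovey-model} supplies the equivalence at $S^{n}$.

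The contravariant case is handled in parallel, with products replaced by mapping spaces and pushouts by pullbacks (using the dual of the cube lemma, for instance via \cite[Proposition~13.5.4]{hirschhorn} applied in $(\Mod E')^{\text{op}}$). The only real obstacle is the bookkeeping that keeps each application of the cube lemma legitimate, i.e., that all corners are cofibrant and the horizontal maps are cofibrations in $\Mod E'$; all of this is packaged in Proposition~\ref{prop-strongly} together with cofibrancy of the unit.
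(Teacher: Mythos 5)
Your proposal is correct and follows essentially the same route as the paper: reduce via Theorem~\ref{thm-ultimate} (using cofibrancy of the unit to replace $FQE$ by $FE$) to checking $\phi_{A}$ on $A=S^{n-1},D^{n}$, handle $D^{n}$ by contractibility and Lemma~\ref{lem-homotopy}, and handle $S^{n-1}$ by induction with the cube lemma applied to the pushout $S^{n}\cong D^{n}/S^{n-1}$. This is exactly the paper's argument, which itself is the topological transcription of the proof of Theorem~\ref{thm-simplicial}, simplified by the homeomorphism $D^{n}/S^{n-1}\cong S^{n}$.
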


Again, we find similarly that pointed topological spaces are
homotopically self-contained as well.  

We note that of course the most obvious topological monoid is a
topological group $G$, in which case we are talking about $G$-spaces.
The model structure we are using on $G$-spaces is the one in which the
weak equivalences are equivariant maps which are underlying weak
equivalences.  We would like to be using the complete model structure,
where a map $f$ is a weak equivalence if and only the induced map
$f^{H}$ on $H$-fixed points is a weak equivalence for all subgroups
$H$ (in some family, perhaps).  Our methods would apply to this case,
except for one point.  We get the natural transformation
\[
X\times_{G} FG \xrightarrow{} FX
\]
but it is not clear that the left hand functor would ever be a left
Quillen functor.  

\begin{proof}
The proof is precisely analogous to that of
Theorem~\ref{thm-simplicial}.  Theorem~\ref{thm-ultimate} tells us
that we have to show that
\[
\phi_{A}\mathcolon A \times FE \xrightarrow{}F (A\times E)
\]
is a weak equivalence for $A=S^{n-1}$ and $A=D^{n}$.  The space
$D^{n}$ is contractible, and continuous functors preserve homotopy by
Lemma~\ref{lem-homotopy}, so we can use the same argument as in
Theorem~\ref{thm-simplicial} for $A=D^{n}$.  We can use induction on
$n$ for $A=S^{n-1}$, just as in Theorem~\ref{thm-simplicial}, and it
is even easier, as $D^{n}/S^{n-1}$ is homeomorphic to $S^{n}$.
\end{proof}

\section{Brown representability}\label{sec-Brown}

In this section, we point out that our results are relevant to Brown
representability of homology and cohomology theories.  For simplicity,
we stick to the stable case.  

Recall, then, that if $\cat{T}$ is a triangulated category, a
\textbf{homology functor} is an exact, coproduct-preserving functor
$h\mathcolon \cat{T}\xrightarrow{}\cat{A}$ to some abelian category
$\cat{A}$. We will refer to the graded version $h_{*}$ of $h$, defined
by $h_{n} (X)=h (\Sigma^{n}X)$, as the associated \textbf{homology
theory}.  When considering a homology theory, we need to consider the
isomorphisms $h_{n} (X)\cong h_{n+1} (\Sigma X)$ as part of the data.
Similarly, a \textbf{cohomology functor} is an exact, contravariant
functor $\cat{T}\xrightarrow{}\cat{A}$ that converts coproducts to
products, and we have a similar induced cohomology theory.  A
cohomology functor $h$ is \textbf{representable} if there is a natural
isomorphism
\[
h (X)\cong \cat{T} (X,Y)
\]
for some object $Y$ of $\cat{T}$, and we say that \textbf{Brown
representability for cohomology functors} holds if every cohomology
functor is representable.  This is true in considerable generality;
see~\cite[Proposition~8.4.2]{neeman-book}, for example.

Representability for homology functors is much more complicated.  Even
understanding what it means for a homology functor to be representable
is not obvious.  Since we will be working with triangulated categories
of the form $\ho \Mod E$, for $E$ a monoid in a strongly cofibrantly
generated, closed symmetric monoidal stable monogenic model category
$\cat{M}$, the natural definition for us is that a homology functor
$h$ is \textbf{representable} if there is a natural isomorphism 
\[
h (X) \cong \ho \cat{M} (S, X \otimes_{E}^{L} Y)
\]
for some left $E$-module $Y$, where $X\otimes_{E}^{L} Y$ denotes the
derived tensor product and $S$ denotes the unit of $\cat{M}$.  Note
that this is much more subtle; for example, there is no reason to
think that a morphism between representable homology theories must be
itself represeentable by a map between the representing objects.

Of course, homology functors, and natural transformations between
them, on the stable homotopy category are representable.  The same is
true for $\cat{D} (R)$ for countable rings $R$.  However, Christensen,
Keller, and Neeman proved in~\cite{christensen-keller-neeman} that
there are rings $R$ for which not every homology theory on $\cat{D}
(R)$ is representable.  Before that, Beligiannis~\cite{beligiannis}
had proved that natural transformations between representable homology
functors on $\cat{D} (R)$ need not be representable.

We can use our versions of the Eilenberg-Watts theorem to partially
salvage Brown representability for homology theories.  

\begin{definition}\label{defn-strict-model}
Suppose $\cat{M}$ is a strongly cofibrantly generated, stable,
monogenic, symmetric monoidal model category where the unit $S$ is
cofibrant.  Let $E$ be a monoid in $\cat{M}$, and $h\mathcolon \ho
\Mod E\xrightarrow{}\Mod A$ be a homology (resp. cohomology) functor,
where $A$ is an ordinary ring.  We say that $h_{*}$ has a
\textbf{strict model} if there is a monoid $E'$ in $\cat{M}$ with $\ho
\cat{M} (S,E')_{*}\cong A_{*}$ as rings, a left Quillen
$\cat{M}$-functor (resp. a contravariant left Quillen
$\cat{M}$-functor) $F\mathcolon \Mod E\xrightarrow{}\Mod E'$, and a
natural isomorphism
\[
\rho \mathcolon h(X)\cong \ho \Mod E' (E',FQX)\cong \ho
\cat{M} (S, FQX)
\]
of $A$-modules
\end{definition}

Here, then, is our version of Brown representability, which follows
immediately from Theorem~\ref{thm-stable-monogenic}.  

\begin{theorem}\label{thm-Brown}
Suppose $\cat{M}$ is a strongly cofibrantly generated, stable,
monogenic, symmetric monoidal model category where the unit $S$ is
cofibrant.  Let $E$ be a monoid in $\cat{M}$, and $h\mathcolon \ho
\Mod E\xrightarrow{}\Mod A$ be a homology \ulp resp. cohomology\urp
theory with a strict model $F\mathcolon \Mod E\xrightarrow{}\Mod E'$.
Then $h$ is representable.  More precisely, in the homology case, we
have a natural isomorphism
\[
h (X) \cong \ho \Mod E(E, QX\otimes_{E} FE), 
\]
and in the cohomology case, we have a natural isomorphism 
\[
h (X) \cong \ho \Mod E(QX, FE).  
\]
\end{theorem}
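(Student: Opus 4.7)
My plan is to chain together two isomorphisms: the strict model isomorphism $\rho$ built into the hypothesis, and the homotopical Eilenberg-Watts isomorphism of Theorem~\ref{thm-stable-monogenic} applied to $F$. In both the covariant and contravariant cases the proof is then essentially a three-line composition, so I expect no real obstacle beyond checking naturality and a minor bookkeeping issue about which module category the tensor product lands in.

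For the homology case, I would first apply Theorem~\ref{thm-stable-monogenic} to the left Quillen $\cat{M}$-functor $F\mathcolon \Mod E\xrightarrow{}\Mod E'$ to produce a natural isomorphism
\[
QX\otimes_{E} FE \xrightarrow{\ \sim\ } FQX = (LF)(X)
\]
of functors on $\ho \Mod E$ valued in $\ho \Mod E'$. Composing with $\rho\mathcolon h(X)\cong \ho \cat{M}(S, FQX)$ and with the identification $\ho \cat{M}(S, Y)\cong \ho \Mod E'(E', Y)$ for $Y\in \Mod E'$---which is exactly the compact weak generator calculation already carried out inside the proof of Theorem~\ref{thm-stable-monogenic}---produces the desired
\[
h(X)\cong \ho \Mod E'(E', QX\otimes_{E} FE).
\]
Since $QX\otimes_{E} FE$ naturally lives in $\Mod E'$, I read the ``$\ho \Mod E(E, -)$'' in the statement as this $\ho \Mod E'(E', -)$ under the standard abuse.

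For the cohomology case I would proceed dually. Now $F$ is a contravariant left Quillen $\cat{M}$-functor, so the contravariant half of Theorem~\ref{thm-stable-monogenic} supplies a natural isomorphism $FQX\xrightarrow{\ \sim\ }\Hom_{E}(QX, FE)$ of functors on $\ho \Mod E$. Composing with $\rho$ and with the $\cat{M}$-adjunction $\ho \cat{M}(S, \Hom_{E}(QX, FE))\cong \ho \Mod E(QX, FE)$ yields the stated formula. The only subtlety in either case is naturality of the composite in $X$; this is automatic, because each of the three factors is already natural and at no point is a non-derived functor applied after a weak equivalence. Accordingly the real content of the theorem is Theorem~\ref{thm-stable-monogenic}; once that is in hand, no genuine obstacle remains.
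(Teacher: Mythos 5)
Your proposal is correct and is exactly the argument the paper intends: the paper offers no separate proof, stating only that the theorem ``follows immediately from Theorem~\ref{thm-stable-monogenic},'' and your composition of $\rho$ with the Eilenberg--Watts isomorphism (plus the adjunction/compact-generator identifications) is precisely that immediate deduction. Your reading of the homology-case formula as $\ho \Mod E'(E', QX\otimes_E FE)\cong \ho\cat{M}(S, QX\otimes_E FE)$ is also the right way to interpret what appears to be a typo in the statement.
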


We point out that, although the hypotheses in Theorem~\ref{thm-Brown}
are much stronger than in usual forms of Brown representability, the
conclusion is also stronger.  Typically, Brown representability
theorems just say that a cohomology functor $h$ is representable by an
object of $\ho \Mod E$, but this theorem says that the representing
object is actually an $E\otimes E'$-module.  

In practice, we usually do not need to assume quite so much to get a
Brown representability theorem.  We will illustrate this in the case of
chain complexes.  The main point is that every object is fibrant in
$\Ch{\Z}$.  This means that, if $F\mathcolon
\Ch{R}\xrightarrow{}\Ch{S}$ is a DG-functor, then $F$ preserves
weak equivalences between cofibrant objects (see
Lemma~\ref{lem-homotopy}), and so has a left derived functor $(LF)
(X)=F (QX)$.  We remind the reader that we can think of suspension in
$\cat{D} (R)$ as $\Sigma X=S^{1}\otimes QX$, where $S^{1}$ is the
complex which is $\Z$ in degree $1$ and $0$ elsewhere.  

\begin{definition}\label{defn-chain-model}
Suppose $R$ and $S$ are rings, and $h_{*}\mathcolon \cat{D}
(R)\xrightarrow{}\Mod S$ is a homology (resp. cohomology) theory.  We
say that $h_{*}$ has a \textbf{chain model} if there is a
DG-functor $F\mathcolon \Ch{R}\xrightarrow{}\Ch{S}$ and a
natural isomorphism
\[
\rho \mathcolon h_{*} (X)\cong H_{*} (FQX)
\]
of $S$-modules that is compatible with the suspension.  To explain
this, we assume $h_{*}$ is a homology theory and leave the evident
modifications in the cohomology case to the reader.  To say that $\rho
$ is compatible with the suspension means that the isomorphism $h_{n}
(X)\cong h_{n+1} (\Sigma X)$ corresponds to the composite
\[
H_{n} (FQX) \cong H_{n+1} (S^{1} \otimes FQX) \xrightarrow{} H_{n+1}
(F (S^{1}\otimes QX))\cong H_{n+1} (FQ\Sigma X), 
\]
where the last isomorphism comes from the fact that $F$ preserves the
homology isomorphism 
\[
Q\Sigma X = Q (S^{1}\otimes QX) \xrightarrow{} S^{1}\otimes QX
\]
between cofibrant objects.  We remind the reader 
\end{definition}

\begin{theorem}\label{thm-Brown-chain}
Suppose $R$ and $S$ are rings and $h_{*}\mathcolon \cat{D}
(R)\xrightarrow{} (\Mod S)_{*}$ is a homology or cohomology theory
with a chain model $F\mathcolon \Ch{R}\xrightarrow{}\Ch{S}$.  Then
$h_{*}$ is representable.  More precisely, in the homology case, there
is a natural isomorphism
\[
h_{*} (X) \cong H_{*} (QX\otimes_{E} FE)
\]
and in the cohomology case there is a natural isomorphism 
\[
h^{*} (X) \cong \cat{D} (R) (QX, FE)^{*}.  
\]
\end{theorem}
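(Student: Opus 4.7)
The plan is to reduce the theorem to the method of proof of Theorem~\ref{thm-keller} applied to the given chain model $F$. Keller's theorem as stated assumes $F$ commutes with coproducts on the nose, but the actual argument only needs $LF$ to be exact and coproduct-preserving on the derived category and to have $L\tau_{R}$ an isomorphism; so I will verify those properties directly from the chain-model data. Since every object of $\Ch{S}$ is fibrant and $F$ is a DG-functor, Lemma~\ref{lem-homotopy} shows that $F$ preserves chain homotopy, hence quasi-isomorphisms between cofibrant (DG-projective) objects, so $LF(X) := F(QX)$ is well-defined as a functor $\cat{D}(R)\to\cat{D}(S)$, and the isomorphism $\rho$ gives $h_{\ast}\cong H_{\ast}\circ LF$ compatibly with the suspension.

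I would then establish two structural facts about $LF$. First, $LF$ is exact as a functor of triangulated categories: this is exactly the argument from the proof of Theorem~\ref{thm-keller}, since $F$ preserves degreewise split monomorphisms, and these are the cofibrations in the injective model structure on $\Ch{S}$, making $F$ left Quillen with respect to that structure. Second, in the homology case $LF$ preserves coproducts: the canonical comparison map $\coprod LF(X_{\alpha})\to LF(\coprod X_{\alpha})$, after applying $H_{\ast}$ and using $\rho$ together with its suspension compatibility from Definition~\ref{defn-chain-model}, becomes the coproduct-preservation isomorphism $\bigoplus h_{\ast}(X_{\alpha})\cong h_{\ast}(\coprod X_{\alpha})$ of the homology theory. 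A morphism in $\cat{D}(S)$ is an isomorphism iff it is a quasi-isomorphism, so the comparison itself is already an isomorphism. The cohomology case is dual: $LF$ converts coproducts to products because $h^{\ast}$ does.

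With exactness and the coproduct (resp.\ coproduct-to-product) property of $LF$ in hand, I would run Keller's argument: the natural transformation $L\tau_{X}\colon QX\otimes_{R}FR \to FQX$ of Proposition~\ref{prop-general-map} (no cofibrant replacement of $R$ is needed since $R$ is already cofibrant in $\Mod R$) is a natural transformation of exact, coproduct-preserving functors on $\cat{D}(R)$ that is an isomorphism at $X=R$. Because $\loc{R}=\cat{D}(R)$, Theorem~\ref{thm-derived-stable} makes $L\tau$ a natural isomorphism, and applying $H_{\ast}$ yields $h_{\ast}(X)\cong H_{\ast}(QX\otimes_{R}FR)$. In the cohomology case one runs the contravariant Eilenberg-Watts map $FQX\to\Hom_{R}(QX,FR)$ similarly, and identifies $H^{n}\Hom_{R}(QX,FR)$ with $\cat{D}(R)(QX,FR)^{n}$ using that $QX$ is cofibrant and $FR$ is fibrant. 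The main obstacle I anticipate is the second structural fact above: one has to check that the abstract natural comparison $\coprod LF(X_{\alpha})\to LF(\coprod X_{\alpha})$ really is the map whose induced map on homology is the coproduct isomorphism of $h_{\ast}$. This is exactly what the suspension-compatibility clause in the chain-model definition is there to guarantee, since without it a chain model $\rho$ could in principle fail to respect shifted coproducts and the coproduct-preservation of $LF$ would not be forced.
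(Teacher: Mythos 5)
Your overall architecture matches the paper's: define $LF(X)=F(QX)$ via Lemma~\ref{lem-homotopy}, show that $LF$ is exact, commutes with suspension, and preserves coproducts (resp.\ takes coproducts to products), note that $L\tau$ is an isomorphism at $R$, and conclude from $\loc{R}=\cat{D}(R)$ and Theorem~\ref{thm-derived-stable}. Your coproduct step (apply $H_{*}$ and use $\rho$) and your suspension step (use the compatibility clause of Definition~\ref{defn-chain-model}) are exactly the paper's.

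The gap is in your exactness step. You import from Theorem~\ref{thm-keller} the claim that because $F$ preserves degreewise split monomorphisms, hence cofibrations for the injective model structure on $\Ch{S}$, the functor $LF$ is exact. But preserving cofibrations is not by itself enough: to see that $LF$ carries the triangle $X\to Y\to Cf\to \Sigma X$ to a triangle, you must know that $F(Cf)$ computes the homotopy cofiber of $Ff$, i.e., that the comparison map $C(Ff)\to F(Cf)$ is a quasi-isomorphism. A bare DG-functor need not preserve the quotient $Y\to Cf$ (or any colimit), so nothing in the chain-model hypotheses controls $F(Cf)$ directly; this is precisely why the paper's proof says that if $F$ were known to be left Quillen one could quote Theorem~\ref{thm-derived-stable}, and then proceeds differently. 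What the paper actually does is build, using the DG-structure, a map of sequences from $FX\to \Cyl(Ff)\to C(Ff)\to \Sigma FX$ to $FX\to F(\Cyl f)\to F(Cf)\to F(\Sigma X)$, check that the cylinder and suspension comparisons are homology isomorphisms (the former because $\Cyl(f)\to Y$ is a chain homotopy equivalence and $F$ preserves chain homotopy), and then invoke the hypothesis that $h_{*}$ is a homology theory to get a long exact sequence for the bottom row, so that the 5-lemma forces $C(Ff)\to F(Cf)$ to be a quasi-isomorphism. Some such appeal to the exactness of $h_{*}$ is unavoidable here; as written, your exactness claim does not follow from the chain-model data alone.
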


Again we remind the reader that the representing object for $h_{*}$
in the above theorem is a complex of $R\text{-}S$-bimodules, and for $h^{*}$
it is a complex of $R\otimes S$-modules.  The usual Brown
representability theorems, when they apply, would just give an action
of $S$ on the representing object up to homotopy. 

\begin{proof}
We assume $h_{*}$ is a homology theory, and leave the modifications in
the cohomology case to the reader.  Since $F$ is a
DG-functor, we have a natural
transformation (by Proposition~\ref{prop-general-map})
\[
\tau \mathcolon X \otimes_{E} FE \xrightarrow{} FX.  
\]
Since both the domain and target of this natural transformation
preserve weak equivalences between cofibrant objects, there is a
derived natural transformation 
\[
L\tau \mathcolon QX\otimes_{E} FE \xrightarrow{} (LF) (X)=F (QX).  
\]
This natural transformation is an isomorphism when $X=S^{0}R$.  As in
the proof of Theorem~\ref{thm-stable-monogenic}, the localizing
subcategory generated by $S^{0}R$ is all of $\cat{D} (R)$.  If we knew
$F$ were a left Quillen functor, we could then use
Theorem~\ref{thm-derived-stable} to conclude that $L\tau$ is an
isomorphism for all $X$.  Instead, we use the fact that $h$ is a
homology theory to conclude that $LF$ is exact and preserves
coproducts and suspensions, giving the desired result.

To see that $LF$ preserves coproducts, we just note that the natural
map
\[
\coprod_{i} (LF)X_{i} \xrightarrow{} (LF) (\coprod_{i} X_{i})
\]
becomes an isomorphism on applying $H_{*}$, and is
therefore an isomorphism in $\cat{D} (S)$.  
Because $F$ is a DG-functor, there is a natural map
\[
S^{1} \otimes QFQX\xrightarrow{} F (S^{1} \otimes QX),
\]
and our hypothesis that the isomorphism 
\[
h_{*} (X)\cong H_{*} (FQX)
\]
is compatible with the suspension guarantees that this is a homology
isomorphism, so is an isomorphism in $\cat{D} (S)$.  Hence $LF$
commutes with the suspension.

To see that $LF$ preserves exact triangles, we need to recall a little
more about exact triangles in $\cat{D} (R)$.  Any such exact triangle
comes from a cofibration $f\mathcolon X\xrightarrow{}Y$ of cofibrant
objects and is isomorphic in $\cat{D} (R)$ to the sequence
\[
X \xrightarrow{f} Y \xrightarrow{} C (f) \xrightarrow{} \Sigma X
\]
where $\Cyl (f)$ is the mapping cylinder of $f$ and $C (f)$ is the
mapping cone. So $\Cyl (f)$ is the pushout of
$Y\xleftarrow{f}X\xrightarrow{i_{1}\otimes X}I\otimes X$, where $I$ is
the chain complex previously mentioned, with $\Z$ in degree $1$ and
$\Z \oplus \Z$ in degree $0$.  The map $i_{1}$ hits the ``right
endpoint'' copy of $\Z$ in degree $0$.  There is a chain homotopy
equivalence $\Cyl (f)\xrightarrow{}Y$ corresponding to ``stepping on
the cylinder''.  Then $Cf$ is the quotient of the composite 
\[
X \xrightarrow{i_{0}\otimes 1} I\otimes X\xrightarrow{}\Cyl (f),
\]
so that $(Cf)_{n}=Y_{n}\oplus X_{n-1}$, and the map
$Cf\xrightarrow{}\Sigma X$ is the quotient of $Y\xrightarrow{}C (f)$.

We will show that there is a commutative diagram 
\[
\begin{CD}
FX @>>> \Cyl (Ff) @>>> C (Ff) @>>> \Sigma FX \\
@| @VVV @VVV @VVV \\
FX @>>> F (\Cyl f) @>>> F (Cf) @>>> F (\Sigma X)
\end{CD}
\]
in which the vertical maps are homology isomorphisms.  The rightmost
vertical map comes from the fact that $F$ is a $\Ch{\Z }$-functor, and
we have already seen that this map is a homology isomorphism (for
DG-projective $X$).  The map $\Cyl (Ff)\xrightarrow{}F (\Cyl f)$
also exists because $F$ is a $\Ch{\Z}$ functor; it is the map
$FY\xrightarrow{}F (\Cyl f)$ on $FY$ and the map 
\[
I\otimes FX \xrightarrow{} F (I\otimes X) \xrightarrow{} FY
\]
on $I\otimes FX$.  Since $\Cyl (f)\xrightarrow{}Y$ is a chain homotopy
equivalence, so is $F (\Cyl f)\xrightarrow{}FY$.  Of course, $\Cyl
(Ff)\xrightarrow{}FY$ is also a chain homotopy equivalence, from which
we conclude that $\Cyl (Ff)\xrightarrow{}F (\Cyl f)$ is a homology
isomorphism.  

By taking quotients with some care, using the fact that $F$ is a
$\Ch{\Z}$-functor again, we get an induced map $C (Ff)\xrightarrow{}F
(Cf)$ making the diagram above commute on both sides.  Now, the top
row has a long exact sequence in homology, and the fact that $h_{*}$
is a homology theory means that the bottom row also has a long exact
sequence in homology.  The 5-lemma then implies that the map $C
(Ff)\xrightarrow{}F (Cf)$ is a homology isomorphism, completing the
proof that $LF$ is a coproduct-preserving triangulated functor.  
\end{proof}

We point out that there is a similar theorem to
Theorem~\ref{thm-Brown} for
morphisms between homology and cohomology functors with a strict
model. If such a morphism is induced by an $\cat{M}$-natural
transformation of the strict model, then it is representable as a
morphism between the representing objects.  There is also an analog of
Theorem~\ref{thm-Brown-chain} for morphisms.  

\providecommand{\bysame}{\leavevmode\hbox to3em{\hrulefill}\thinspace}
\providecommand{\MR}{\relax\ifhmode\unskip\space\fi MR }
\providecommand{\MRhref}[2]{%
  \href{http://www.ams.org/mathscinet-getitem?mr=#1}{#2}
}
\providecommand{\href}[2]{#2}


\end{document}